\documentclass[a4paper,12pt]{article}
\usepackage[a4paper,margin=1in]{geometry}
\usepackage[utf8]{inputenc}
\usepackage[T1]{fontenc}
\usepackage[leqno]{amsmath}
\usepackage{amssymb}
\usepackage{amsthm}
\usepackage{mathtools,thmtools}
\usepackage{placeins}
\usepackage[english]{babel}
\usepackage{graphicx}
\usepackage{enumerate}
\usepackage{enumitem}
\usepackage{multirow}
\usepackage[autostyle=true,german=quotes]{csquotes}
\usepackage{color}
\usepackage{esint}
\usepackage[hidelinks]{hyperref}
\usepackage{cleveref}

\numberwithin{equation}{section}


\date{}

\crefname{subsection}{subsection}{subsections}
\crefname{subsubsection}{subsubsection}{subsubsections}

\makeatletter
\newcommand{\myitem}[1]{%
\item[#1]\protected@edef\@currentlabel{#1}%
}
\makeatother

\newlist{thmlist}{enumerate}{1}
\setlist[thmlist]{label=(\alph{thmlisti}), ref=\alph{thmlisti}}
\newlist{lemlist}{enumerate}{1}
\setlist[lemlist]{label=(\alph{lemlisti}), ref=\alph{lemlisti}}

\crefname{subsection}{subsection}{subsections}
\crefname{subsubsection}{subsubsection}{subsubsections}
\crefname{figure}{figure}{figures}

\DeclareRobustCommand{\abbrevcrefs}{%
\crefname{section}{sec.}{secs.}%
\crefname{subsection}{subsec.}{subsecs.}%
\crefname{theorem}{thm.}{thms.}%
\crefname{lemma}{lem.}{lems.}%
\crefname{corollary}{cor.}{cors.}%
\crefname{proposition}{prop.}{props.}%
\crefname{figure}{fig.}{figs.}%
\crefname{equation}{eqn.}{eqns.}%
}
\DeclareRobustCommand{\cshref}[1]{{\abbrevcrefs\cref{#1}}}

\let\originalleft\left
\let\originalright\right
\renewcommand{\left}{\mathopen{}\mathclose\bgroup\originalleft}
\renewcommand{\right}{\aftergroup\egroup\originalright}

\renewcommand{\d}{\,\mathrm{d}}
\newcommand{\pw}{{\rm pw}}
\newcommand{\nc}{{\rm nc}}

\newcommand{\ut}{\widehat u}
\newcommand{\vt}{\widehat v}

\newcommand{\uM}{u_\M}

\newcommand{\vM}{v_\M}
\newcommand{\wM}{w_\M}

\newcommand{\mut}{\widehat{\mu}}

\newcommand{\GGamma}{G}

\renewcommand{\div}{\mathrm{div}}
\DeclareMathOperator{\curl}{Curl}

\newcommand{\Inc}{I}
\newcommand{\trb}[1]{|\!|\!|#1|\!|\!|}
\newcommand{\trbpw}[1]{\trb{#1}_\pw}
\newcommand{\jump}[1]{\left[#1\right]}

\newcommand{\id}{\mathrm{id}}

\newcommand{\R}{{\mathbb{R}}}
\newcommand{\N}{{\mathbb{N}}}

\newcommand{\Vnc}{V_{\mathrm{nc}}}
\newcommand{\Vh}{\Vnc}
\newcommand{\Vt}{V_\pw}

\newcommand{\M}{\mathrm{M}}

\newcommand{\TT}{\mathbb{T}}
\newcommand{\T}{\mathcal{T}}
\newcommand{\E}{\mathcal{E}}

\newcommand{\vpw}{ v_\pw}

\newcommand{\wt}{\widehat w}

\newcommand{\bht}{\widehat{\beta}_h}
\newcommand{\btT}{\widehat{\beta}_0}
\newcommand{\mtT}{\widehat{\mu}}
\newcommand{\vrpT}{\varrho_{\mathrm{uq}}}
\newcommand{\vrmT}{\varrho_{\mathrm{ex}}}

\theoremstyle{remark}

\theoremstyle{definition}
\newtheorem{defn}{Definition}[section]

\theoremstyle{plain}
\newtheorem{theorem}{Theorem}[section]
\newtheorem{lemma}[theorem]{Lemma}

\newtheorem{proposition}[theorem]{Proposition}

\theoremstyle{remark}
\newtheorem{remark}[theorem]{Remark}

\newcommand{\Const}[1]{C_{\rm #1}}
\newcommand{\Constb}[1]{C_{\beta,{\rm #1}}}

\newcommand{\LJ}{\Lambda_{\rm J}}

\newcommand{\Cqb}{C_{\rm qb}}

\newcommand{\LDG}{L}

\usepackage{totcount}
\newtotcounter{nconst}
\makeatletter
\newcount\rc@count
\let\rc@clearconstantlist\empty

\newcommand\rc@clearconstant[1]{\global\expandafter\let\csname rc@const@#1\endcsname\undefined}

\newcommand\resetconstants[1]{%
    \def\rc@constname{#1}% Set the new base name of the constants to the argument
    \global\rc@count=1\relax % Reset the constant counter to 1
    \bgroup 
        \let\\\rc@clearconstant % map over the list of constants that have been defined, clearing each of them.
        \rc@clearconstantlist
        \global\let\rc@clearconstantlist\empty % Globally empty the list of constants.
    \egroup
}
\newcommand\const[1]{%
    \@ifundefined{rc@const@#1}{%
        \expandafter\xdef\csname rc@const@#1\endcsname{%
           \noexpand\rc@useconst{\rc@constname}{\the\rc@count}%
        }%
        \g@addto@macro\rc@clearconstantlist{\\{#1}}%
		\global\setcounter{nconst}{\the\rc@count}\relax
        \global\advance\rc@count1\relax
    }{}%
    \csname rc@const@#1\endcsname
}

\newcommand\rc@useconst[2]{\ensuremath{#1_{#2}}}
\makeatother

	\resetconstants{C}

\makeatletter
\newcommand{\leqnomode}{\tagsleft@true}
\newcommand{\reqnomode}{\tagsleft@false}
\makeatother

\title{%
	Guaranteed inf--sup bounds and existence verification for semilinear elliptic problems via nonconforming finite elements
}
\author{Benedikt Gr\"a{\ss}le\footnote{Institut für Mathematik, Universität Zürich, Winterthurerstr.~190, CH-8057 Zürich, Switzerland \\
  (benedikt.graessle@math.uzh.ch)}
}
\begin{document}

\maketitle

\begin{abstract}
	A Newton--Kantorovich-type argument enables the a posteriori existence verification of a locally unique regular root near a computed approximation.
	This framework allows for non-selfadjoint problems and extends the existing verification theory to nonconforming discretisations.
	A key ingredient is a guaranteed lower bound for the continuous inf--sup constant of the linearisation obtained with a novel approximation error bound for nonconforming schemes.
	All quantities are obtained from a postprocessing on the same discretisation within the adaptive loop.
	The theory is applied to a fourth-order formulation of the stationary 2D Navier--Stokes equations and illustrated by numerical experiments.
\end{abstract}

\noindent {\bf Keywords:} a posteriori, nonconforming, finite element method, inf--sup constant, existence verification, Crouzeix--Raviart, Morley

\noindent {\bf AMS Classification:}  65N30, 65N12, 65N50

\section{Introduction}\label{sec:Introduction}
Nonlinear problems may admit multiple solutions, no solution, or singular configurations such as bifurcation points.
That raises a basic difficulty for their numerical discretisation:
a computed approximation may converge to a different solution branch, oscillate between nearby solutions, or may fail to reflect the local structure of the solution set. 
Consider a semilinear elliptic model problem with
$m$-harmonic principal part
\begin{align*}
	(-\Delta)^m u + G(u) = F.
\end{align*}
The focus is on the Laplacian for $m=1$ and the biharmonic operator for $m=2$
that define the energy scalar product $a(\bullet,\bullet)=(D^m\bullet,D^m\bullet)_{L^2(\Omega)}$
in the Hilbert space $V\coloneqq H^m_0(\Omega)$.
Given $F\in V^*$, the weak formulation seeks $u\in V$ with
\begin{align}\label{eqn:SWP}
	N(u;\varphi)\coloneqq a(u,\varphi) + G(u;\varphi) - F(\varphi)=0\qquad\text{for all }\varphi\in V.
\end{align}
Here the semilinear nature is modelled by the assumption that $G\in C^1(V;V^*)$ is a compact nonlinear operator, and the solutions are exactly the roots $u\in V$ of $N(u)=0$.

Since the semilinear problem~\eqref{eqn:SWP} is not well posed in the sense of a globally unique solution, the natural first target for the numerical analysis is a local solution concept with additional regularity properties.
A root \(u\) of~\eqref{eqn:SWP} is called \emph{regular} if the Fréchet derivative \(DN(u)\) is bijective and hence has a positive inf--sup constant \(\beta>0\).
These roots are always isolated by the inverse function theorem, locally stable under perturbations, and away from bifurcations.
In fact, this regularity property covers the generic situation.

The numerical analysis for the approximation of regular roots is well developed in the following sense.
Under the typical assumption that a regular root $u\in V$ of~\eqref{eqn:SWP} exists \emph{sufficiently close} to the discrete approximation $u_h\in V_h$, one obtains classical a priori and a posteriori error estimates~\cite{CGK:GeneralizedKarmanEquations2007,MN:ConformingFiniteElement2016,BNRS:C0InteriorPenalty2017,KPP:MorleyFiniteElement2021,CNRS:UnifiedPrioriAnalysis2023,CGN:PosterioriErrorControl2024}, and even optimal convergence results for adaptive schemes in special cases~\cite{CN:AdaptiveMorleyFEM2021,CG:AdaptiveMorleyFEM2025}.
However, the regularity assumption is crucial and hides a critical dependence on the inf--sup constant $\beta$ of the linearised equation.
This number governs not only the constants in a priori and a posteriori error estimates, but also the sensitivity of the root under perturbations and the size of the neighbourhood in which Newton-type arguments and local discrete solvability results apply.
Small values of $\beta$ indicate ill-conditioning and may occur near singular configurations such as bifurcation or turning points.
In this regime the error bounds deteriorate and the analysis may require a prohibitively fine mesh, see~\cite[rem.~6.4]{CG:AdaptiveMorleyFEM2025} for a concrete discussion. 
Since the existence and regularity of roots are generally unclear, this raises a basic practical question: Are there regular roots with moderate inf--sup constant $\beta$ for a given source $F\in V^*$?

Computer-assisted existence proofs address this issue from a different viewpoint and have been developed over the past decades~\cite{NHW:NumericalMethodVerify2005,TLO:VerifiedComputationsSemilinear2013,NPW:NumericalVerificationMethods2019}, with earlier roots in~\cite{Nak:NumericalApproachProof1988,Plu:ComputerassistedProofsSemilinear2009}.
They typically employ a Newton--Kantorovich or fixed-point argument on the continuous level at a computed approximation $\widehat u\in V$, and thereby turn numerical data into a mathematical existence proof in the sense of validated numerics.
For semilinear problems, such verification methods are often realised through conforming finite-dimensional projections and require explicit a priori bounds for auxiliary linear problems that often have limited regularity.
While this is effective for many second-order problems, it becomes restrictive for higher-order equations ($m\geq2$) where conforming finite elements are complicated and expensive in the absence of an adaptive theory.

This paper develops a verification framework based on nonconforming discretisations that include (enriched) Crouzeix--Raviart ($m=1$) and Morley FEM ($m=2$).
It turns quasi-optimal nonconforming approximations into computable stability information and existence certificates on the same discretisation used in the adaptive loop.
The analysis considers exact discrete quantities; the influence of finite arithmetic is not discussed here, and the reader is referred to~\cite{NPW:NumericalVerificationMethods2019} and the references therein for validated numerics.

\subsection*{Main results}%
\label{sub:Main results}
\begin{enumerate}[label=(\Alph*)]
	\item\label{it:A}
	A genericity result justifies the focus on regular roots as the natural first target 
	for weakly coercive problems with compact semilinearity $G\in C^1(V;V^*)$:
	For a dense open set $\mathcal{O}\subset V^*$ of sources $F\in\mathcal{O}$, all roots of~\eqref{eqn:SWP} are regular and their number is finite.
	This general characterisation of solution sets for semilinear problems 
	implies that singular situations such as bifurcations are \emph{rare}.
	It extends related genericity results~\cite{FP:ComportementGlobalSolutions1967,FT:StructureSetStationary1977} on the Navier--Stokes equations to general semilinear problems.

\item \label{it:B}
	The existence verification theory is extended to quasi-optimal nonconforming schemes $V_\nc\not\subset V$ motivated by recent insights into their algebraic structure~\cite{VZ:QuasioptimalNonconformingMethods2019,CNRS:UnifiedPrioriAnalysis2023,CGN:PosterioriErrorControl2024}.
	These schemes come naturally with a computable smoothing operator $J:V_\nc\to V$
	that enables a Newton--Kantorovich argument formulated at the conforming counterpart $Jv_\nc\in V$ of the discrete approximation $v_\nc\in V_\nc$.
	This leads to computable conditions \ref{ass:N} for the existence of a regular root $u\in V$ in an explicit neighbourhood of $v_\nc$ together with a guaranteed lower bound on $\beta>0$.
	An advantage of this nonconforming approach is that all numerical quantities are evaluated by postprocessing $v_\nc$ within the adaptive loop.

\item\label{it:C}
	A central task for the Newton--Kantorovich argument in \ref{it:B} is the computation of a guaranteed lower bound on the inf--sup constant for $DN(Jv_\nc)$.
	The key novelty is a computable approximation bound for the nonconforming scheme applied to the linear \(m\)-harmonic problem with right-hand side in \(H^{-s}(\Omega)\) for fixed \(0\le s\le m\).
	This bound controls perturbations of \(DN(Jv_\nc)\) where $s$ is such that
	\begin{align*}
        DG(Jv_\nc)\in L(V;H^{-s}(\Omega)).
	\end{align*}
	An improved Sobolev regularity $s<m$ may follow either from $G\in C^1\bigl(V;H^{-s}(\Omega)\bigr)$ or from additional regularity of $Jv_\nc$ (typically $Jv_\nc\in V\cap W^{m,\infty}(\Omega)$) in applications.

	\item The abstract existence verification framework is applied to a Navier--Stokes-type model problem with source $F\in L^2(\Omega)$ in a polygonal Lipschitz domain $\Omega\subset \R^2$: 
		The stationary Navier--Stokes equations allow for an equivalent formulation as a fourth-order problem for the stream-function $u\in V\equiv H^2_0(\Omega)$ satisfying \begin{align}\label{eqn:intro_NVS_SP}
			\begin{aligned}
				\Delta^2u -\div(\Delta u \curl u)&= F
			\end{aligned}\qquad
			\begin{aligned}
				&\text{in }\Omega\subset \R^2
			\end{aligned}
		\end{align}
		with the rotated gradient $\curl v\coloneqq
		(\partial v/\partial y;-\partial v/\partial x)$.
		This non-selfadjoint fourth-order problem is a natural test case for nonconforming discretisations that avoid complicated continuity requirements.
		All verification quantities are computed for the Morley FEM, and numerical benchmarks certify existence and local uniqueness.
\end{enumerate}

\subsection*{Outline}%
\label{sub:Overview}

\Cref{sec:Existence verification for fourth-order problems} develops the
abstract existence verification within a general framework to underline its algebraic
nature and universality. The semilinear problem is
formulated in a Hilbert space setting, and
\cref{thm:reg_roots} provides the genericity of regular roots with respect to
the right-hand side. \Cref{sub:Nonconforming reference discretisation}
introduces the abstract nonconforming setting and its algebraic ingredients,
namely the interpolation operator $I$ and the smoother $J$.
For completeness, a typical a priori convergence analysis towards a regular root that extends~\cite{CNRS:UnifiedPrioriAnalysis2023} is presented in \cref{thm:abstract_apriori} with proof given in \cref{apx:Proof of a priori}.

The remaining parts of this paper consider the task of verifying existence and stability properties of regular roots as part of the original discretisation process.
\Cref{sub:The Newton--Kantorovich theorem} derives a Newton--Kantorovich verification criterion \ref{ass:N} for a nonconforming approximation $v_\nc\in V_\nc$ and its conforming representative $Jv_\nc\in V$, while \cref{sub:Morley_reference_discretisation} provides the required residual control.
\Cref{sec:The_continuous_inf--sup_constant} concerns the guaranteed lower bound on the continuous inf--sup constant of $DN(Jv_\nc)$ based on a novel approximation control for nonconforming schemes.
\Cref{thm:continuous_inf_sup} provides the inf--sup bound and \cref{sub:computation_kappaM_Ts} identifies the required discrete quantities as
finite-dimensional algebraic (eigenvalue) problems.

\Cref{sec:Algebraic_realisation_and_application} applies the abstract theory
to the stream-function formulation of the stationary two-dimensional
Navier--Stokes equations. \Cref{sub:Stream-function vorticity formulation of Navier--Stokes} verifies the genericity result in this setting,
\cref{sub:Morley} specialises the framework to the Morley discretisation,
and \cref{sub:Explicit constants} derives the computable constants required
by the verification theorem and the guaranteed inf--sup bound. 
Numerical experiments on the resulting computer-assisted existence and local uniqueness verification, neglecting numerical errors from finite precision, conclude this paper.

\subsection*{General notation}%
\label{sub:General notation}
Throughout this paper, $\T$ denotes a shape-regular triangulation into compact triangles 
$T\in\T$ with positive area $|T|>0$, diameter $h_T=\mathrm{diam}(T)$, centroid $\mathrm{mid}(T)$,
and outer unit normal $\nu_T$ on $\partial T$.
The set of admissible triangulations $\TT$ consists of all possible refinements $\T$ generated by
successive newest-vertex bisections (NVB) from a fixed shape-regular initial
triangulation $\T_0$ of the domain $\Omega$.
Let $\mathcal{E}(T)$ and $\mathcal{V}(T)$ denote the sets of edges and vertices of $T\in\mathcal{T}$.
All (resp.~interior,~boundary) vertices in $\T$ are collected in the set $\mathcal{V}$ (resp.~$\mathcal{V}(\Omega), \mathcal V
(\partial\Omega)$).
Similarly, $\mathcal{E}$ (resp.~$\mathcal{E}(\Omega), \mathcal E(\partial\Omega)$) 
denotes the set of all (resp.~interior,
boundary) edges.
Each edge $E\in\E$ is associated with unit normal and tangent vectors of fixed orientation, 
denoted $\nu_E$ and $\tau_E$.
With this sign convention, 
the jump $[v]_E$ across $E$ of a piecewise Lipschitz continuous function $v$ 
across $E$ reads
\begin{align*}
	\jump v(x) &\coloneqq
\begin{cases}  
(v|_{T_+})(x) - (v|_{T_-})(x)  &  \text{at}\,\, x \in E=\partial T_{+}\cap \partial T_{-} \in \mathcal{E}(\Omega),\\ 
 v(x) & \text{at}\,\, x\in E \in \mathcal{E}(\partial \Omega) .
\end{cases}%
\end{align*}%
The space $P_{\hspace{-.13em}k}(T)$ of polynomials of total degree at most $k\in\mathbb N_0$ 
in a triangle $T\in\T$ defines
the space of piecewise polynomials
\begin{align*}
	P_{\hspace{-.13em}k}(\T)\coloneqq\{p\in L^\infty(\Omega) : p|_{T}\in
	P_{\hspace{-.13em}k}(T)\text{ for all } T\in \T\}.
\end{align*}
The $L^2$ orthogonal projection $\Pi_0:L^2(\Omega)\to P_0(\mathcal{T})$ onto piecewise constants
coincides with the piecewise integral mean and applies
componentwise to vectorvalued functions.
The maximal mesh size $h_{\max}=\max_{T\in\mathcal{T}}h_{T}$
controls $h_\T\in P_{\hspace{-.13em}0}(\T)$ with $h_\T|_T\equiv
h_T$ on any $T\in\T$ and
the Poincar\'e constant $\Const{P}\leq 1/\pi$ satisfies 
$\|(1-\Pi_0)v\|_{L^2(T)}\leq \Const{P}h_T\|\nabla v\|_{L^2(T)}$ for any $v\in H^1(T)$.
Let $\nabla_\pw,\curl_\pw, D^2_\pw$, 
and $\Delta_\pw$ denote the piecewise action of the corresponding 
differential operator over $\mathcal{T}$.
The notation $A\lesssim B$ suppresses a generic constant $C$ independent of the mesh sizes
in $A\leq CB$ and $A\approx B$ abbreviates $A\lesssim B\lesssim A$.%

\section{Existence verification of regular roots}%
\label{sec:Existence verification for fourth-order problems}
The Newton--Kantorovich theorem guarantees the existence of regular roots 
of semilinear problems in the neighbourhood of some nonconforming approximation.

\subsection{Regular roots of semilinear problems}%
\label{sub:Regular roots of semilinear problems}
This section considers an abstract Hilbert space 
$(V, a)$ with induced norm $\trb{\bullet}$ %
and associated operator norm $\trb{\bullet}_*$ in $V^*$.
The abstract semilinear problem for a compact $G\in C^1(V;V^*)$
with source $F\in V^*$ seeks a root $u\in V$ of $N\in C^1(V;V^*)$ given by
\begin{align}\label{eqn:ASWP}
	N(v)\coloneqq a(v,\bullet) + G(v) - F\in V^*.
\end{align}

A root $u\in V$ of \eqref{eqn:ASWP} is called a \emph{regular root} of $N(u)=0$,
if the Fr\'echet derivative $DN(u)\in L(V;V^*)$ is bijective.
A regular root $u\in V$ is always isolated by the inverse function theorem and leads to a positive inf--sup constant 
\begin{align}\label{eqn:cont_inf_sup}
	0<\beta:=\inf_{0\ne v\in V}\frac{\trb{DN(u;v)}_{*}}{\trb{v}}
	=\inf_{0\ne v\in V} \sup_{0\ne\varphi\in V}\frac{a(v,\varphi)+DG(u;v,\varphi)}{\trb v\trb{\varphi}}.
\end{align}
\begin{lemma}[inf--sup condition~{\cite{Bra:FiniteElementsTheory2007,BS:MathematicalTheoryFinite2008}}]\label{lem:cont_inf_sup}
	If $DN(u)\in L(V;V^*)$ is bijective at $u\in V$, it satisfies the inf--sup condition~\eqref{eqn:cont_inf_sup} for
	$\beta>0$ with $\|DN(u)^{-1}\|_{L(V^*;V)}=\beta^{-1}$.
\end{lemma}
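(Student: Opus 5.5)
The plan is to reduce the lemma to the bounded inverse theorem combined with the definition of the dual norm $\trb{\bullet}_*$. Write $A\coloneqq DN(u)\in L(V;V^*)$. Since $N\in C^1(V;V^*)$, the derivative $A$ is a bounded linear operator between the Banach spaces $V$ and $V^*$. By hypothesis $A$ is bijective, so the open mapping (bounded inverse) theorem yields $A^{-1}\in L(V^*;V)$ with finite norm $\|A^{-1}\|_{L(V^*;V)}\in(0,\infty)$. The norm is nonzero because $V^*\neq\{0\}$ whenever $V\ne\{0\}$; the trivial case $V=\{0\}$ is excluded, since then the infimum in~\eqref{eqn:cont_inf_sup} is over the empty set.

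The main step is the identity $\beta=\|A^{-1}\|^{-1}$, where $\beta$ is the first infimum in~\eqref{eqn:cont_inf_sup}. Every $0\ne v\in V$ corresponds bijectively to $0\neq g\coloneqq Av\in V^*$. Hence
\begin{align*}
	\inf_{0\ne v\in V}\frac{\trb{Av}_*}{\trb v}
	=\inf_{0\ne g\in V^*}\frac{\trb{g}_*}{\trb{A^{-1}g}}
	=\Bigl(\sup_{0\ne g\in V^*}\frac{\trb{A^{-1}g}}{\trb g_*}\Bigr)^{-1}
	=\|A^{-1}\|_{L(V^*;V)}^{-1}>0 .
\end{align*}
Surjectivity of $A$ is used exactly here: it ensures that the substitution $g=Av$ ranges over all of $V^*\setminus\{0\}$. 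Injectivity ensures $g\neq0$.

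The second expression in~\eqref{eqn:cont_inf_sup} follows from the definition of the operator norm in $V^*$. We have $\trb{Av}_*=\sup_{0\ne\varphi\in V}(Av)(\varphi)/\trb\varphi$, and the linearity of the Fréchet derivative gives
\[
	(Av)(\varphi)=a(v,\varphi)+DG(u;v,\varphi),
\]
because the source term $F$ is constant and the map $v\mapsto a(v,\bullet)$ is linear.

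No step is a genuine obstacle. The only nontrivial ingredient is the boundedness of $A^{-1}$, which the open mapping theorem supplies because $V$ and $V^*$ are complete. Without completeness or surjectivity, $\beta$ could vanish, or $\beta$ would only bound the inverse on the range of $A$.
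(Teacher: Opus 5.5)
Your proof is correct and is the standard argument the paper has in mind; the paper omits the proof and only cites Braess and Brenner--Scott. You use the bounded inverse theorem, identify the infimum with $\|DN(u)^{-1}\|_{L(V^*;V)}^{-1}$ via the substitution $g=DN(u)v$, and unfold the dual norm for the second expression in~\eqref{eqn:cont_inf_sup}; the only cosmetic issue is that the paper already reserves $A$ for the isometry induced by $a(\bullet,\bullet)$, so a different letter for $DN(u)$ would avoid a notational clash.
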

\begin{proof}
	The proof of this standard result is omitted.
\end{proof}
The converse of \cref{lem:cont_inf_sup} is also true by the Fredholm alternative
for the compactly perturbed isomorphism $DN(u)$.
Let $A\in L(V;V^*)$ denote the isometry induced by the scalar product $a(\bullet,\bullet)$.
\begin{theorem}[genericity of regular roots]\label{thm:reg_roots}
	Assume $T\coloneqq A+G$ is weakly coercive, i.e.,
	\begin{align*}
		\trb{T(v)}_* \to \infty\quad\text{as}
		\quad \trb{v}\to\infty\quad \text{with }v\in V.
	\end{align*}
	Let $\mathcal{O}\subset V^*$ denote the set of right-hand sides 
	$F\in V^*$ such that all roots $u\in V$ of~\eqref{eqn:ASWP} are regular.
	Then $\mathcal{O}$ is a dense open subset of $V^*$ and, 
	for any $F\in\mathcal{O}$, there are at most finitely many (possibly zero)
	roots of~\eqref{eqn:ASWP}, i.e., $|T^{-1}(F)|<\infty$ if $F\in\mathcal{O}$.
\end{theorem}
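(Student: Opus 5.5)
The plan is to read \cref{thm:reg_roots} as a statement about the nonlinear Fredholm map $T=A+G:V\to V^*$ of index zero and to combine three ingredients: properness, openness of regular values, and the Sard--Smale theorem. Note first that $DT(v)=A+DG(v)$ with $DG(v)$ compact. Indeed, the Fréchet derivative of a compact $C^1$ map is a compact operator, via the standard argument that $DG(v)h$ is the limit of $(G(v+th)-G(v))/t$ uniformly on bounded sets. Hence $DT(v)$ is a compact perturbation of the isometry $A$, so it is Fredholm of index zero. By the Fredholm alternative mentioned after \cref{lem:cont_inf_sup}, $DT(v)$ is bijective if and only if it is injective. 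The set $\mathcal{O}$ is exactly the set of regular values of $T$, namely those $F$ such that $DT(u)$ is surjective for all $u\in T^{-1}(F)$. This includes values $F\notin T(V)$.

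\textbf{Step 1 (properness).} I will show that $T$ is proper on closed bounded sets, and then globally via weak coercivity. Let $T(v_n)\to F$. Weak coercivity gives that $(v_n)$ is bounded, so a subsequence converges weakly, $v_n\rightharpoonup v$. Compactness of $G$ gives $G(v_n)\to g$ strongly along a further subsequence. Then $Av_n=T(v_n)-G(v_n)\to F-g$ strongly, and since $A$ is an isometric isomorphism, $v_n\to A^{-1}(F-g)$ strongly. Consequently $T^{-1}(K)$ is compact for every compact $K\subset V^*$, and $T$ is a closed map.

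\textbf{Step 2 (finiteness and openness).} Fix $F\in\mathcal{O}$. By the inverse function theorem each root is isolated, and by Step 1 the set $T^{-1}(F)$ is compact. Hence $T^{-1}(F)$ is finite. For openness, consider the set $S=\{v\in V: DT(v)\text{ not bijective}\}$ of critical points. It is closed, because bijective operators form an open subset of $L(V;V^*)$ and $DT$ is continuous. Since $T$ is a closed map by Step 1, the critical value set $T(S)$ is closed in $V^*$, and therefore $\mathcal{O}=V^*\setminus T(S)$ is open. For the closedness of $T$, I will argue as follows: if $T(s_n)\to F$ with $s_n\in S$, then Step 1 yields a subsequence $s_n\to s$, and $s\in S$ because $S$ is closed, so $F=T(s)\in T(S)$.

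\textbf{Step 3 (density).} This is the main obstacle. The cleanest route is the Sard--Smale theorem. $V$ is a separable Hilbert space in the applications, and I would add separability as an assumption if needed; $T$ is $C^1$ Fredholm of index $0$, and $C^1$ suffices because $1>\max(0,\text{index})$. Sard--Smale then says that the regular values are residual, hence dense by Baire. If one prefers to avoid Sard--Smale, I would try a Lyapunov--Schmidt reduction near each critical point. Locally, $T$ is conjugate to a map between finite-dimensional spaces of equal dimension plus an isomorphism, and the classical finite-dimensional Sard theorem gives that the local critical values have empty interior. One then combines this with properness: cover a bounded piece of $S$ by finitely many such neighbourhoods, so that $T(S)$ is a closed set which, within each ball, is a finite union of closed sets with empty interior, and therefore has empty interior. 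The delicate point in this alternative route is that the finite-dimensional Sard theorem requires $C^k$ smoothness with $k>\dim\ker-\text{index}+\dots$; in equal dimensions, however, $C^1$ suffices, which is exactly what we need. Closedness together with empty interior yields density of $\mathcal{O}$.
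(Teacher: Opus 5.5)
Your proposal follows the paper's proof: properness of $T$ from weak coercivity and compactness of $G$, Sard--Smale for the proper $C^1$ Fredholm map of index zero to get density, and finiteness of $T^{-1}(F)$ from the inverse function theorem plus compactness of the fibre. The only deviation is that you derive openness of $\mathcal{O}$ by hand (the critical set is closed and a proper map is closed), whereas the paper reads openness off from the proper-map version of Sard--Smale. Separability is not actually needed, but your fallback argument has a flawed step: you cannot cover a bounded piece of $S$ by finitely many neighbourhoods, since bounded sets are not compact in infinite dimensions. Instead, cover the compact set $T^{-1}(F)\cap S$ by $U_1,\dots,U_k$, and note that $T(S\setminus(U_1\cup\dots\cup U_k))$ is closed and misses $F$; this localises the argument to a neighbourhood of $F$.
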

\newcommand{\regval}{\mathrm{reg}}
\begin{proof}
	The proof consists of $3$ steps.

	\medskip
	\noindent\emph{Step 1} is the reduction to regular values of the nonlinear Fredholm operator $T=A+G$.
	The isomorphism $A\in L(V;V^*)$ is injective and surjective so that $\textrm{ker}(A) = 0 =
	\textrm{codim}(\textrm{im}(A))$.
	Hence $A$ is a linear Fredholm operator of index $\mathrm{ind}(A) = 0$.
	Since $G\in C^1(V;V^*)$ is compact,
	the compact perturbation $T=A+G$
	is a nonlinear Fredholm operator with the same index 
	$\mathrm{ind}(T) =\mathrm{ind}(A) = 0$
	(cf.~\cite[ex.~8.16]{Zei:NonlinearFunctionalAnalysis1992} for details).

	An element $F\in V^*$ is a regular value of $T\in C^1(V;V^*)$,
	if the Fr\'echet derivative $DT(v)\in L(V;V^*)$ 
	is surjective at all $v\in T^{-1}(F)$. 
	Since the Fredholm index of $T$ is $0$,
	$DT(v)=DN(v)$ (from $T-N=F\in V^*$ constant)
	at $v\in V$ is surjective if and only if it is bijective.
	Observe that for any $F\in V^*$, 
	$u\in V$ is a root of $N$ if and only if $u$ satisfies $T(u)=F$.
	Hence the set of all regular values
	\[
	\textrm{reg}(T)\coloneqq\{F\in V^*\ :\ DT(v)\text{ is surjective for all }v\in T^{-1}(F)\}
	\]
	of $T$ are exactly the right-hand sides $F\in V^*$ such that all roots of 
	$N$ are regular.

	\medskip
	\noindent\emph{Step 2} applies the theorem of
	Sard--Smale~\cite{Sma:InfiniteDimensionalVersion1965,Geb:LeraySchauderDegreeFramed1975,QS:HausdorffConullityCritical1972},
	which states that the set of regular values of any proper Fredholm map of index $0$
	is a dense open subset of $V^*$.
	It remains to verify that $T$ is proper, i.e., the preimage 
	$T^{-1}(K)$ of any compact set $K\subset V^*$ is compact in $V$.
	The weak coercivity of $T$ implies that $T^{-1}(C)$ is bounded in $V$ for
	any bounded set $C\subset V^*$. 
	In fact, the reverse implication also 
	holds~\cite[p.~173]{Zei:NonlinearFunctionalAnalysis1992}.

	To show that $T$ is proper, 
	let $K\subset V^*$ be any compact set and consider an arbitrary sequence $(u_j)_{j\in\N}$ in the preimage $T^{-1}(K)$.
	Because $K$ is compact, there exists a convergent subsequence (not relabelled) such that $f_j\coloneqq T(u_j)$
	converges to some $f\in K$.
	Decompose the image of the sequence as $f_j=a_j + k_j$ with $a_j\coloneqq Au_j$ and $k_j\coloneqq G(u_j)$.
	Since $G$ is a compact operator and $T^{-1}(K)$ is bounded by the weak coercivity of $T$, there exists a further subsequence (not relabelled)
	such that $k_j\equiv G(u_j)$ converges to some $k\in V^*$.
	Consequently, $a_j=f_j - k_j$ converges to $f-k$ as $j\to \infty$.
	As $A$ is an isomorphism, its inverse is continuous so that $u_j\equiv A^{-1}a_j$ converges to 
	$u\coloneqq A^{-1}(f-k)$.
	Since this holds for any compact set $K\subset V^*$, the operator $T$ is proper.
	Recall that $T$ is a nonlinear Fredholm operator of index $0$ from step 1.
	Hence the theorem of Sard--Smale 
	provides that the set of regular values $\textrm{reg}(T)$ of $T$ is open and
	dense.

	\medskip
	\noindent\emph{Step 3} concludes the proof.
	Let $F\in \textrm{reg}(T)\subset V^*$ be a regular value of $T$.
	Then for any $v\in T^{-1}(F)$, $DT(v)$ is surjective and (by step 1) bijective.
	The inverse function theorem \cite[thm.~4.F]{Zei:NonlinearFunctionalAnalysis1992} provides an open neighbourhood
	$V(v)\subset V$ around $v\in V$ such that $T|_{V(v)}:V(v)\to V^*$ is bijective
	onto its image.
	Consequently, $v$ is the only solution of $T(v) = F$ in $V(v)$.
	This holds for all $v\in V$ and implies that $T^{-1}(F)$ contains only isolated points.
	Since the map $T$ is proper by step 2, the preimage $T^{-1}(F)$ is also compact.
	As a compact set of isolated points, $T^{-1}(F)$ is finite.
	By step 2, this reveals at most finitely many roots to $N=T-F$
	for the right-hand side $F$.
	This concludes the proof for the open and dense set 
	$\mathcal{O}\coloneqq \textrm{reg}(T)$ from step 3.
\end{proof}

\begin{remark}[weak coercivity]\label{rem:weak_coercivity}
	The weak coercivity required in~\cref{thm:reg_roots} holds for a large class of nonlinear maps.
	In particular, operators $T\in C^1(V;V^*)$ related to differential equations usually 
	satisfy a priori bounds of the form
	\begin{align*}
		\trb{v}\leq C\trb{T(v)}_*\quad\text{for all }v\in V
	\end{align*}
	with some constant $C$.
	A priori bounds and (strong) coercivity each imply weak coercivity.
\end{remark}
\begin{remark}[extensions to~\cref{thm:reg_roots}]\label{rem:reg_roots}
	\Cref{thm:reg_roots} holds in more generality.
	Without the weak coercivity of $A+G$ and provided that $V$ is separable, 
	one still obtains the existence of a dense set 
	$\mathcal{D}\subset V^*$ such
	that all roots of $N=A+G-F$ are regular for any $F\in\mathcal{D}$.
	However, $\mathcal{D}$ does not need to be open, see
	\cite{Sma:InfiniteDimensionalVersion1965,Zei:NonlinearFunctionalAnalysis1997} 
	for further details.

	The result also extends to the Banach setting, i.e., to compact nonlinear perturbations $T=A+G\in C^1(X;Y)$ of an isomorphism $A\in L(X;Y)$ between Banach spaces $X,Y$.
\end{remark}

\subsection{Quasi-optimal nonconforming discretisation}%
\label{sub:Nonconforming reference discretisation}
Recent progress in the understanding of (quasi-optimal) nonconforming schemes $V_\nc\not\subset V$
involves a careful analysis based on the algebraic interplay of the natural interpolation $I:V\to V_\nc$
and a right-inverse with additional properties $J:V_\nc\to V$, called smoother.
Following the abstract description of nonconforming schemes 
in~\cite[sec.~2]{CG:OptimalConvergenceRates2025}, let the 
energy scalar product $a(\bullet,\bullet)=a_\pw(\bullet,\bullet)|_{V\times V}$ extend to a semi-scalar product 
$$
a_\pw(\bullet,\bullet):V_\pw\times V_\pw\to \R
$$
on the superspace $V_\pw\coloneqq V+ V_\nc$ and denote the induced (semi-)norm by $\trbpw{\bullet}$.
On $V_\nc$, $a_\pw(\bullet,\bullet)$ is a scalar product and $\trbpw{\bullet}$ a norm.
The subscript $(\bullet)_{\pw}$ indicates that the extension is usually obtained by piecewise
action of differential operators in applications.

Classical nonconforming FEM induce an interpolation operator $I\in L(V;V_\nc)$
with approximation properties and an important orthogonality: It is 
the Galerkin projector
\begin{align}\label{eqn:I1}%
	a_{\pw}(v-I v, v_\nc) = 0
	\qquad\text{for all }(v,v_{\mathrm{nc}})\in V\times V_\nc.
\end{align}
A direct consequence is the contraction property $\|I\|\leq 1$ and the Pythagoras identity  
\begin{align*}
    \trb{ v }_{\rm pw}^2
    =   \trb{ v - Iv }_{\rm pw}^2 + \trb{ Iv}_{\rm pw}^2
    \quad\text{for all } v\in V.
\end{align*}
The smoother $J:V_\nc\to V$ is a right-inverse of the interpolation, namely
\begin{align}\label{eqn:JI}%
	I J v_\nc = v_\nc\qquad\text{for all }v_\nc\in V_\nc,
\end{align}
and quasi-optimal in the sense that there exists some $\Lambda_{J}>0$ with
\begin{align}\label{eqn:J_qo}%
	\trb{v_\nc-Jv_\nc}_\pw
	\leq \Lambda_{J}\min_{v\in V}\trb{v-v_\nc}_\pw\qquad\text{for all }v_\nc\in V_\nc.
\end{align}
In other words $Jv_\nc$ is a quasi-best-approximation of $v_\nc\in V_\nc$ in $V$. 
Observe that~\eqref{eqn:I1} implies $I=\id$ on $V\cap V_\nc$ so that $J=\id$ on $V\cap V_\nc$
is a natural requirement. 
It is shown in~\cite[lem.~2.2]{CN:LowestorderEquivalentNonstandard2022} that the latter
already implies~\eqref{eqn:J_qo} for finite-dimensional $V_\nc$.

Let $G_\pw\in C^1(V_\pw;V^*)$ extend the semilinearity $G=G_\pw|_{V}$ 
and define the discrete nonlinear map $N_h:V_\pw\to V_\nc^*$ for any $v_\pw\in V_\pw$ by
\begin{align}\label{eqn:DSWP}
	N_h(\vpw;\varphi_\nc)\coloneqq a_\pw(\vpw,\varphi_\nc)+G_\pw(\vpw;J\varphi_\nc) - F(J\varphi_\nc)
	\qquad\text{for all }\varphi_\nc\in {V_\nc}.
\end{align}
The nonconforming discretisation of~\eqref{eqn:ASWP} seeks a discrete root 
$u_\nc\in V_\nc$ of $N_h$, namely
\begin{align}\label{eqn:ADWP}
	N_h(u_\nc; \varphi_\nc) = 0
	\qquad\text{for all }\varphi_\nc\in V_\nc.
\end{align}
If the discrete space $V_\nc$ is rich enough in that $J V_\nc\subset V$ is \emph{sufficiently} 
close to the regular root $u\in V$, the discrete problem~\eqref{eqn:ADWP} 
admits a discrete solution that is a quasi-best approximation and unique in a neighbourhood of $u$.
The term \emph{sufficiently close} relates to the smallness of the discretisation constant
\begin{align}\label{eqn:H2_H4}
	\delta_0\coloneqq\max\left\{\trbpw{(1\!-\!I)u},
	\|(1-I)A^{-1}DG_\pw(u)\|_{L(V_\nc;\Vt)}\right\}.%
\end{align}%

\begin{theorem}[quasi-best approximation] \label{thm:abstract_apriori}
	Let $u\in V$ denote a regular root of \eqref{eqn:ASWP} for $F\in V^*$ and $G_\pw\in C^1(V_\pw;V^*)$
	and assume that $DG_\pw$ is locally Lipschitz.
	For sufficiently small $\delta_0$, 
	there exist positive constants $\gamma_0,\varepsilon_0>0$
	and $\Const{qb}\geq1$ such that the discrete problem \eqref{eqn:ADWP} %
	has a unique discrete root $u_\nc \in V_\nc$ with 
	$\trbpw{u-u_\nc}<\varepsilon_0$ and
	\begin{align}\label{eqn:discrete_inf_sup}
		0<\gamma_0&\leq \inf_{0\ne v_\nc\in
		\Vh}\sup_{0\ne \varphi_\nc\in\Vh}\frac{DN_h(u_\nc;v_\nc,\varphi_\nc)}{\trbpw{v_\nc}\trbpw{\varphi_\nc}},\\
		\trbpw{ u - u_\nc} &\leq\Cqb \min_{v_\nc \in \Vh} \trbpw{u - v_\nc}.\nonumber%
	\end{align}
\end{theorem}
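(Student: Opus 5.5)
The proof follows the classical Brezzi--Rappaz--Raviart / Newton--Kantorovich route in three stages: a discrete inf--sup condition at the interpolant $Iu$, a fixed-point argument for existence and local uniqueness, and quasi-best approximation from the first two.

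\emph{Stage 1: discrete inf--sup at $Iu$.} Set $\beta$ from \eqref{eqn:cont_inf_sup} and consider the bilinear form $DN_h(Iu;v_\nc,\varphi_\nc)=a_\pw(v_\nc,\varphi_\nc)+DG_\pw(Iu;v_\nc,J\varphi_\nc)$ on $V_\nc\times V_\nc$. Given $v_\nc\in V_\nc$, I define $z\coloneqq A^{-1}DG_\pw(u)v_\nc\in V$ and $w\coloneqq v_\nc+z\in V_\pw$. By \eqref{eqn:I1} and \eqref{eqn:JI},
\begin{align*}
a_\pw(v_\nc,\varphi_\nc)+DG_\pw(u;v_\nc,J\varphi_\nc)=a_\pw(v_\nc,\varphi_\nc)+a(z,J\varphi_\nc)=a_\pw(v_\nc+Iz,\varphi_\nc).
\end{align*}
Hence the discrete operator at $u$ is represented by $v_\nc\mapsto v_\nc+Iz=I(Jv_\nc+z)+(v_\nc-IJv_\nc)$, which equals $I(Jv_\nc+z)$ because $IJ=\id$. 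The conforming function $Jv_\nc+z$ satisfies $A(Jv_\nc+z)=DN(u)Jv_\nc+DG_\pw(u)(v_\nc-Jv_\nc)$. The continuous inf--sup bound gives $\trb{Jv_\nc+z}\gtrsim\beta\trb{Jv_\nc}$ up to a perturbation controlled by $\trbpw{v_\nc-Jv_\nc}$. Since $I$ differs from the identity by $(1-I)z$, whose norm is at most $\delta_0\trbpw{v_\nc}$ by \eqref{eqn:H2_H4}, I obtain $\trbpw{v_\nc+Iz}\ge\trbpw{v_\nc+z}-\delta_0\trbpw{v_\nc}$. The term $\trbpw{v_\nc+z}$ is bounded below via $\beta$ and \eqref{eqn:J_qo}. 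Making this quantitative is the \emph{main obstacle}, because of the consistency terms involving $v_\nc-Jv_\nc$.

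My first attempt is the cleaner variant that tests against $\varphi_\nc=I\varphi$ for the continuous optimal test function $\varphi$ of $DN(u)$ at $Jv_\nc$. I then bound the defect using $\trbpw{(1-I)\cdot}$ and the $\delta_0$-term. This gives a discrete inf--sup constant $\gamma\ge c\beta-C\delta_0$ with $c,C$ depending on $\LJ$ and $\|DG_\pw(u)\|$. Local Lipschitz continuity of $DG_\pw$ transfers this from $u$ to $Iu$, since $\trbpw{u-Iu}\le\delta_0$, and to all $v_\nc$ in a small ball around $Iu$. The result is $\gamma_0\ge\gamma/2>0$ there once $\delta_0$ is small.

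\emph{Stage 2: existence and local uniqueness.} I define the simplified Newton map $\Phi(v_\nc)\coloneqq v_\nc-DN_h(Iu)^{-1}N_h(v_\nc)$ on the closed ball $B_\varepsilon(Iu)\subset V_\nc$. The consistency residual satisfies
\begin{align*}
N_h(Iu;\varphi_\nc)=a_\pw(Iu-u,\varphi_\nc)+G_\pw(Iu;J\varphi_\nc)-G(u;J\varphi_\nc),
\end{align*}
using $N(u)=0$. By \eqref{eqn:I1} the first term vanishes, so the residual is bounded by $\mathrm{Lip}(G_\pw)\,\|J\|\,\trbpw{u-Iu}\lesssim\delta_0$, where $\|J\|\le1+\LJ$. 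Local Lipschitz continuity of $DG_\pw$ makes $\Phi$ a contraction of $B_\varepsilon(Iu)$ into itself for suitable $\varepsilon\approx\delta_0$. Banach's fixed-point theorem then yields a unique root $u_\nc$ in that ball. I then choose $\varepsilon_0$ so that $\trbpw{u-u_\nc}<\varepsilon_0$ implies $u_\nc\in B_\varepsilon(Iu)$; uniqueness in the $\varepsilon_0$-neighbourhood follows from the discrete inf--sup condition on the whole ball together with the mean value theorem. The inf--sup bound \eqref{eqn:discrete_inf_sup} at $u_\nc$ comes from Stage 1 via Lipschitz continuity.

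\emph{Stage 3: quasi-best approximation.} Write $e\coloneqq u_\nc-Iu$. The discrete inf--sup bound and the mean value theorem give
\begin{align*}
\gamma_0\trbpw{e}\le\sup_{\varphi_\nc}\frac{N_h(u_\nc;\varphi_\nc)-N_h(Iu;\varphi_\nc)}{\trbpw{\varphi_\nc}}+\text{higher-order terms}=\trb{N_h(Iu)}_{V_\nc^*}+\dots
\end{align*}
The residual $N_h(Iu)$ is bounded by $\trbpw{u-Iu}$ as in Stage 2. The Pythagoras identity gives $\trbpw{u-Iu}\le\min_{v_\nc}\trbpw{u-v_\nc}$, since $I$ is the $a_\pw$-orthogonal projection. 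Combining, $\trbpw{u-u_\nc}\le(1+C/\gamma_0)\trbpw{u-Iu}$, which yields $\Const{qb}$.
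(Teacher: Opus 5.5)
Your architecture matches the paper's. You bound the discrete inf--sup constant at $Iu$ using $z=A^{-1}DG_\pw(u)v_\nc$ and $\delta_0$. You then run a Newton-type argument at $Iu$ with residual $\lesssim\trbpw{(1-I)u}$; your simplified-Newton contraction is a fine substitute for the Newton--Kantorovich theorem. Quasi-best approximation then follows from the orthogonality of $I$. However, Stage~1, the heart of the proof, is not closed. The lower bound on $\trbpw{v_\nc+z}$ that you call the main obstacle is never proved, and your ``first attempt'' does not prove it either. Testing with $\varphi_\nc=I\varphi$ gives, by \eqref{eqn:I1} and \eqref{eqn:JI},
\begin{align*}
DN_h(u;v_\nc,I\varphi)-DN(u;Jv_\nc,\varphi)
&=a_\pw((1-J)v_\nc,(1-I)\varphi)+DG_\pw(u;(1-J)v_\nc,\varphi)\\
&\quad+a_\pw((1-I)z,(JI-1)\varphi).
\end{align*}
Only the last term is $O(\delta_0\trbpw{v_\nc})$. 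The first two are controlled only by a multiple of $\trbpw{(1-J)v_\nc}$. For a general, oscillatory $v_\nc\in\Vnc$ this quantity is comparable to $\trbpw{v_\nc}$ and has nothing to do with $\delta_0$. Converting $\beta\trb{Jv_\nc}$ into $\beta\trbpw{v_\nc}$ costs the same term again. So $\gamma\ge c\beta-C\delta_0$ does not follow from this route.

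The missing idea is to apply \eqref{eqn:J_qo} with the conforming competitor $-z\in V$. This gives $\trbpw{(1-J)v_\nc}\le\LJ\trbpw{v_\nc+z}$, so the non-small consistency term is controlled by the very quantity you want to bound from below. Combine this with $\beta\trb{Jv_\nc}\le\trb{Jv_\nc+z}+\|DG_\pw(u)\|_{L(V_\pw;V^*)}\trbpw{(1-J)v_\nc}$ and two triangle inequalities. For every $v_\nc$, without any smallness, you obtain
\[
\trbpw{v_\nc}\le\bigl(\beta^{-1}+\LJ\bigl(1+\beta^{-1}(1+\|DG_\pw(u)\|_{L(V_\pw;V^*)})\bigr)\bigr)\trbpw{v_\nc+z}.
\]
Then $\trbpw{v_\nc+z}\le\trbpw{v_\nc+Iz}+\delta_0\trbpw{v_\nc}$, together with the test function $\varphi_\nc=(v_\nc+Iz)/\trbpw{v_\nc+Iz}$, finishes exactly as in the paper.

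When doing so, note that \eqref{eqn:I1} lets you replace $J\varphi_\nc$ by $IJ\varphi_\nc=\varphi_\nc$ only against discrete functions. Hence $a(z,J\varphi_\nc)=a_\pw(Iz,\varphi_\nc)+a_\pw((1-I)z,(1-I)J\varphi_\nc)$, not $a_\pw(Iz,\varphi_\nc)$. Likewise $N_h(Iu;\varphi_\nc)$ contains $a_\pw(Iu-u,J\varphi_\nc)=-a_\pw((1-I)u,(1-I)J\varphi_\nc)\neq0$. These extra terms are bounded by $\|J\|\delta_0\trbpw{v_\nc}\trbpw{\varphi_\nc}$ and $\|J\|\trbpw{(1-I)u}\trbpw{\varphi_\nc}$ respectively. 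They are therefore harmless, and your Stages~2--3 survive with constant $(1+\mathrm{Lip})\|J\|$. Also take $\varepsilon_0$ to be a fixed radius on which $DN_h$ stays uniformly inf--sup stable, as the paper does with $r/2$, rather than a radius of size $\approx\delta_0$.
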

	\Cref{thm:abstract_apriori} is proven in~\cite{CNRS:UnifiedPrioriAnalysis2023} for quadratic semilinearities
	$G_\pw\in C^2(V_\pw;V^*)$ and holds in more generality also for locally Lipschitz $DG_\pw$;
	further details are given for completeness in \cref{apx:Proof of a priori}.
	Note that the constants $\gamma_0,\varepsilon_0$, and $\Const{qb}$ can and will depend on 
	the regular root $u\in V$, the extended semilinearity $G_\pw$, and (an upper bound of) $\|J\|$.

\begin{remark}[on the smallness assumption of $\delta_0$]\label{rem:smallness_delta_0}
	In PDE applications, 
	$\delta_0$ often relates to the maximal mesh-width $h_{\max}$ of an underlying triangulation.
	The point is that the smallness assumption on $\delta_0$ is satisfied for a sufficiently 
	fine triangulation and all subsequent refinements thereof.
	The assumption of a sufficiently small initial triangulation with $h_{\max}\lesssim \delta_0$ is natural for
	approximations of semilinear PDE, see, e.g.,
	\cite{BNRS:C0InteriorPenalty2017,CMN:PrioriPosterioriError2019,CNRS:UnifiedPrioriAnalysis2023,CGN:PosterioriErrorControl2024}
	and references therein for examples.
\end{remark}

\subsection{Existence verification of regular roots}%
\label{sub:The Newton--Kantorovich theorem}
\Cref{thm:abstract_apriori} is a prototypical a priori result that ensures the existence of 
a discrete root $u_\nc\in V_\nc$ of~\eqref{eqn:ADWP} close to a regular root $u\in V$; 
but the existence of the regular root remains an a priori assumption.
In contrast, the remaining parts of this paper will \emph{not} assume existence and
discuss a numerical proof strategy to verify existence a posteriori based on some 
(computed) $v_\nc\in V_\nc$. The discrete approximation $v_\nc$ itself can be arbitrary 
and may be obtained from another scheme with (approximate) solution $u_h\in V_h$ (e.g., 
by averaging $v_\nc= Av_h$ with $A:V_h\to V_\nc$).

The Newton--Kantorovich-type a posteriori existence verification for a regular root 
of~\eqref{eqn:ASWP} in the neighbourhood of $v_\nc\in{V_\nc}$ is based on the following condition.

\begin{enumerate}
	\myitem{\bf (N)}\label{ass:N}
	Let $v_\nc\in V_\nc$ and $\btT , \LDG ,\mtT{} >0$ satisfy 
	$2 \LDG\mtT{} <\btT ^2$ and
    \begin{itemize}
		\myitem{\bf (N1)}\label{ass:N1} $
		\|DG(v) - DG(w)\|_{L(V;V^*)}\leq \LDG \trb{v-w}$ holds for all $v,w\in V$,
		 \myitem{\bf (N2)}\label{ass:N2} $\trb{N(Jv_\nc)}_*\leq \mtT{} $,
    \myitem{\bf (N3)}\label{ass:N3} $DN(Jv_\nc)\in L(V;V^*)$ is invertible and fulfils the inf--sup condition
	\begin{align*}
		\btT \leq\inf_{0\ne v\in V}\sup_{0\ne\varphi\in V}\frac{DN(Jv_\nc;v,\varphi)}{\trb{v}\trb{\varphi}},
	\end{align*}
    \end{itemize}
\end{enumerate}
The Newton--Kantorovich theorem%
~\cite{CM:NewtonKantorovichTheorem2012,Zei:NonlinearFunctionalAnalysis1992}
has been employed in verified computations based on conforming
discretisations, see~\cite{NPW:NumericalVerificationMethods2019,LNO:ComputerassistedProofStationary2022} and references
therein.
The following version is adapted to nonconforming discretisations.
\begin{theorem}[solution existence verification]\label{thm:newton_kantorovich}
	Suppose \ref{ass:N} holds, then 
	\begin{enumerate}[label=(\alph*)]
		\item there exists a root $u\in V$ of $N$ in the neighbourhood of $v_\nc$ with the error bound
			\begin{align*}
				\trbpw{u-v_\nc}\leq \frac{\btT -\sqrt{\btT^2-2L \mtT{}}}{L }+\trbpw{(1-J)v_\nc}\eqqcolon
				\vrmT,
			\end{align*}
		\item there exists no other root $v\ne u\in V$ of $N$ such that
			\begin{align*}
				\trbpw{v-v_\nc}\leq \frac{\btT +\sqrt{\btT ^2-2L \mtT{}}}{L }-\trbpw{(1-J)v_\nc}\eqqcolon
				\vrpT,
			\end{align*}
		\item the root in (a) is a regular root with an inf--sup constant bounded from below by
			\begin{align*}
				\beta_0 \coloneqq \sqrt{\btT^2-2 L  \mtT{} }\leq \inf_{0\ne v\in V}\sup_{0\ne \varphi\in
				V}\frac{DN(u;v,\varphi)}{\trb{v}\trb{\varphi}}.
			\end{align*}
		\end{enumerate}
	\end{theorem}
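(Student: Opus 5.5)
The plan is to apply the classical Newton--Kantorovich theorem (in the form of \cite{CM:NewtonKantorovichTheorem2012,Zei:NonlinearFunctionalAnalysis1992}) at the conforming point $x_0\coloneqq Jv_\nc\in V$. We then transfer the resulting statements to $v_\nc$ with the triangle inequality in $\trbpw{\bullet}$.

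\emph{Step 1: the Kantorovich quantities.} Set $B\coloneqq DN(x_0)^{-1}$. By \ref{ass:N3} and \cref{lem:cont_inf_sup}, $\|B\|_{L(V^*;V)}\leq\btT^{-1}$. The first Newton step $x_1-x_0=-BN(x_0)$ therefore satisfies $\eta\coloneqq\trb{x_1-x_0}\leq\mtT/\btT$ by \ref{ass:N2}. Since $DN=A+DG$ with $A$ constant, \ref{ass:N1} gives $\|B(DN(v)-DN(w))\|\leq (L/\btT)\trb{v-w}$, so the Lipschitz constant of the preconditioned derivative is $K\coloneqq L/\btT$. The Kantorovich condition $h\coloneqq K\eta\leq\frac12$ follows from $2L\mtT<\btT^2$, which is in fact strict. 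The theorem then yields a root $u$ in the closed ball around $x_0$ of radius $t^*=(1-\sqrt{1-2h})/K$, and uniqueness in the ball of radius $t^{**}=(1+\sqrt{1-2h})/K$. With the bounds above, $t^*\leq(\btT-\sqrt{\btT^2-2L\mtT})/L$, because $t^*$ is increasing in $\eta$. For uniqueness, use the version of the theorem stated directly with the upper bounds $\btT^{-1}$ and $\mtT$, i.e. the majorant $\varphi(t)=\mtT-\btT t+Lt^2/2$, whose roots are exactly $r_\mp=(\btT\mp\sqrt{\btT^2-2L\mtT})/L$. This gives existence within $r_-$ and uniqueness within $r_+$ of $x_0$. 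Alternatively, the majorant argument can be done directly: the map $\Phi(v)=v-BN(v)$ is a contraction on $\overline B(x_0,r)$ for $r_-\le r<r_+$.

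\emph{Step 2: transfer to $v_\nc$.} For (a), use $\trbpw{u-v_\nc}\leq\trb{u-Jv_\nc}+\trbpw{(1-J)v_\nc}\leq r_-+\trbpw{(1-J)v_\nc}=\vrmT$. For (b), let $v\in V$ be a root with $\trbpw{v-v_\nc}\leq\vrpT$. Then $\trb{v-Jv_\nc}\leq\vrpT+\trbpw{(1-J)v_\nc}=r_+$, so $v=u$ by uniqueness. The closed ball at radius $r_+$ needs care: the uniqueness statement in \cite{Zei:NonlinearFunctionalAnalysis1992} holds in the open ball of radius $r_+$, and in the closed ball when $h<\frac12$, which is our strict case. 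If needed, I would argue directly: for two roots $u,v$, write
\[
0=B(N(v)-N(u))=(v-u)+B\int_0^1\bigl(DN(u+s(v-u))-DN(x_0)\bigr)(v-u)\,\mathrm ds,
\]
and bound the integral term via \ref{ass:N1} by $\btT^{-1}L\bigl(\trb{u-x_0}/2+\trb{v-x_0}/2\bigr)\trb{v-u}\le\btT^{-1}L\,(r_-+r_+)/2\,\trb{v-u}$. Since $L(r_-+r_+)/2=\btT$, this only gives a factor $\le1$; to get strict contraction I would use $\trb{u-x_0}\le r_-<r_+$ whenever $\btT^2-2L\mtT>0$.

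\emph{Step 3: inf--sup at $u$.} For any $v\in V$,
\[
\trb{DN(u;v)}_*\geq\trb{DN(x_0;v)}_*-\|DG(u)-DG(x_0)\|\trb v\geq(\btT-Lr_-)\trb v=\sqrt{\btT^2-2L\mtT}\,\trb v=\beta_0\trb v.
\]
Since $DN(u)=A+DG(u)$ is a compact perturbation of an isomorphism (Fredholm index $0$, as in Step 1 of the proof of \cref{thm:reg_roots}), injectivity with this lower bound implies bijectivity. Hence $u$ is regular, which gives (c).

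The main obstacle is the sharp uniqueness radius in (b), in particular the boundary case of the closed ball. I would handle it by citing the strict-inequality version of Kantorovich's theorem rather than re-deriving the majorant argument.
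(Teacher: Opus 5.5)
\textbf{Parts (a) and (c).} For (a), and for (b) on the open ball, you follow the paper. You apply Newton--Kantorovich at $\ut=Jv_\nc$ with $\|DN(\ut)^{-1}\|\le\btT^{-1}$, first Newton step at most $\mtT/\btT$, and Lipschitz constant $L$. This gives $r_\mp=(\btT\mp\sqrt{\btT^2-2L\mtT})/L$, and the triangle inequality with $\trbpw{(1-J)v_\nc}$ finishes.

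For (c) your route differs but is sound. The paper quotes step~(iii) of \cite{CM:NewtonKantorovichTheorem2012}, the Banach-lemma bound $\|DN(v)^{-1}\|\le m/(1-Lm\trb{v-\ut})$. That yields bijectivity and $\beta_0$ in one stroke and uses no compactness. You get the same bound $\btT-Lr_-=\beta_0$ by perturbation, and surjectivity from Fredholm index $0$. This is legitimate because compactness of $G$ makes $DG(u)$ compact. A Neumann series for $1+DN(\ut)^{-1}(DG(u)-DG(\ut))$, whose norm is at most $Lr_-/\btT<1$, would avoid Fredholm theory entirely.

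\textbf{The gap: the boundary case of (b).} Your repair does not work. For $h<\tfrac12$, Kantorovich's theorem gives uniqueness only in the \emph{open} ball of radius $t^{**}$, and this is sharp: the scalar majorant $t\mapsto\mtT-\btT t+\tfrac L2t^2$ has its second root exactly at $r_+$. In your integral identity, once you use $\trb{u-\ut}\le r_-$ and $\trb{v-\ut}\le r_+$, the factor $\btT^{-1}L(\trb{u-\ut}+\trb{v-\ut})/2$ is bounded by exactly $\btT^{-1}L(r_-+r_+)/2=1$. So appealing to $r_-<r_+$ gives nothing further.

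In fact (b) as stated fails. Take $v_\nc=0$, a unit vector $e\in V$, $0<\beta\le 1$, and $\mu>0$ with $2L\mu<\beta^2$, and set
\[
G(v;\varphi)\coloneqq\Bigl((\beta-1)\,a(e,v)-\tfrac L2\,a(e,v)^2\Bigr)a(e,\varphi),\qquad F\coloneqq\mu\,a(e,\bullet).
\]
Then $G$ is compact (rank one), and for $a(e,w)=0$ one has $N(te+w)=a(w,\bullet)+(\beta t-\tfrac L2t^2-\mu)\,a(e,\bullet)$. One checks \ref{ass:N1}--\ref{ass:N3} with $\btT=\beta$ and $\mtT=\mu$; the inf--sup constant of $DN(0)$ is $\min\{1,\beta\}$. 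The roots of $N$ are exactly $r_\pm e$. So $v=r_+e\ne u=r_-e$ satisfies $\trbpw{v-v_\nc}=r_+=\vrpT$.

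Hence (b) holds only with the strict inequality $\trbpw{v-v_\nc}<\vrpT$. Your integral identity does prove that version, since the factor is $<1$ once $\trb{v-\ut}<r_+$. The paper's proof has the same slip: it invokes uniqueness in the open ball $B_{r_+}(\ut)$ but only knows $\trb{v-\ut}\le r_+$.

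Relatedly, your alternative claim fails for $r\ge\btT/L=(r_-+r_+)/2$. You say $v\mapsto v-DN(\ut)^{-1}N(v)$ contracts the closed ball of radius $r$ about $\ut$ for every $r_-\le r<r_+$. In the example this map sends $te$ to $(\mu+\tfrac L2t^2)e/\beta$, whose slope $Lt/\beta$ exceeds $1$ there.
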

\begin{table}[t]
	\centering
	\begin{tabular}{c|c|l|l|l}%
		constant&purpose& depends on& computed in&description\\\hline
		\multirow{2}{*}{$\btT$} & \multirow{2}{*}{\ref{ass:N3}} & $\bht$,
		$\kappa(\T,s)$,&\multirow{2}{*}{\cshref{thm:continuous_inf_sup}}& lower bound on the\\
					   && $\Constb{1}$, $\Constb{2}$, $\Constb{3}$ & &inf--sup constant \\ 
		$\bht$ & \eqref{eqn:bht_def} & $G_\pw$, $v_\nc$ & \cshref{lem:computation_bht}&discrete inf--sup constant\\ 
		$\Constb{1}$ & \eqref{eqn:Constb_1} & $G_\pw$, $v_\nc$ & \cshref{lem:explicit_constants_NVS}& norm of
		$DG_\pw|_{{V_\nc}}$ in $H^{-s}(\Omega)$\\ 
		$\Constb{2}$ & \eqref{eqn:Constb_2} & $G_\pw$, $v_\nc$ & \cshref{lem:explicit_constants_NVS} &norm of
		$DG_\pw|_{(1-I)V}$ in $H^{-s}(\Omega)$\\ 
		$\Constb{3}$ & \eqref{eqn:Constb_3} & $G_\pw$, $v_\nc$ & \cshref{lem:explicit_constants_NVS}& norm of
		$DG_\pw|_{(1-I)V}$ in $V^*$\\ 
		$\mtT{}$ & \ref{ass:N2} & $\operatorname{Res}_h$, $\mu_{\textrm{res}}$ &
		\cshref{thm:explicit_residual_control}&residual control $\trb{N(Jv_\nc)}_*\leq \mtT{}$\\ 
		$\kappa(\T, s)$ & \eqref{eqn:kappa_Ts} & $\|J\|$, $\kappa_\nc(\T,s)$ &
		\cshref{thm:discretisation_error}&approximation error control\\ 
		$\kappa_{\nc}(\T, s)$ & \eqref{eqn:kappaM_T_s} & --- & \cshref{lem:computation_kappaM_Ts}&discrete
		contribution to $\kappa(\T,s)$\\ 
		$\LDG$ & \ref{ass:N1} & $G$ & \cshref{lem:Lipschitz_explicit_NVS}&Lipschitz constant of $DN$ and $DG$\\ 
		$L_G$ & \eqref{eqn:mut_N2_def} & $G_\pw$, $v_\nc$ & \cshref{lem:explicit_constants_NVS}&distance of
		$G$ at $v_\nc$ and $Jv_\nc$\\ 
		$\mu_{\textrm{res}}$ & \eqref{eqn:discrete_residual} & $F$, $G_\pw$, $v_\nc$ &
		\cshref{lem:explicit_constants_NVS}&explicit residual estimator\\ 
		$\operatorname{Res}_h$ & \eqref{eqn:Resh_def} & $\|J\|$, $G_\pw$, $v_\nc$ & \cshref{lem:computation_Resh}&discrete residual\\ 
	\end{tabular}
	\caption{List of constants required for \ref{ass:N} in the existence verification theory 
		and references to their computation with application to
		the Navier--Stokes problem in \cref{sec:Algebraic_realisation_and_application}.%
	}
	\label{tab:sol_verification}%
\end{table}

\begin{proof}[Proof]
		The proof in two steps uses 
		the well-known Newton--Kantorovich theorem; see \cite{CM:NewtonKantorovichTheorem2012,
	Zei:NonlinearFunctionalAnalysis1992} for an
		overview and self-contained proof.
		Let $B_r(v)\coloneqq\{w\in V\ |\ \trb{v-w}<r\}$ denote the open ball of radius $r>0$ around $v\in V$.

		\medskip\noindent\emph{Step 1} prepares the Newton--Kantorovich theorem for the existence of a root close to
		$\ut\coloneqq Jv_\nc\in V$.
		Since $DN(\ut)$ is invertible by~\ref{ass:N3},~\cref{lem:cont_inf_sup} applies verbatim for
		\begin{align*}
			\|DN(\ut)^{-1}\|_{L(V^*;V)}\leq m\coloneqq \btT ^{-1}.
		\end{align*}
		This, the definition of the operator norm, and~\ref{ass:N2} result in
		\begin{align*}
			\trb{DN(\ut)^{-1}N(\ut)}\leq \|DN(\ut)^{-1}\|_{L(V^*;V)}\trb{N(\ut)}_*\leq
			\varepsilon\coloneqq \btT ^{-1}\mtT{} .
		\end{align*}
		Since the operator $A$ induced by the scalar product $a(\bullet,\bullet)$ is linear 
		(it is an isometry), 
		\begin{align*}
			DN(v)-DN(w) = DG(v)-DG(w)\qquad\text{for all }v,w\in V.
		\end{align*}
		Hence~\ref{ass:N1} is
		the Lipschitz continuity of $DN:V\to L(V;V^*)$ with Lipschitz constant~$L$. 

		\medskip\noindent\emph{Step 2} concludes the proof. 
		Since $Lm\varepsilon<1/2$ from step 1 and~\ref{ass:N}, the
		Newton--Kantorovich theorem provides a root $u\in B_{r_-}(\ut)\subset V$ of $N$ that is
		unique in $B_{r_+}(\ut)$ for $r_\pm=(1\pm\sqrt{1-2L m \varepsilon})/(Lm)$.
		Using $m=\btT^{-1}$ and $u\in B_{r_-}(\ut)$, (a) follows from
		\begin{align*}
			\trbpw{u-v_\nc}\leq \trb{u-\ut}+\trbpw{(1-J)v_\nc}\leq r_-+\trbpw{(1-J)v_\nc}\equiv\vrmT.
		\end{align*}
		Let $v\in V$ be any root of $N$ with $\trbpw{v-v_\nc}\leq \vrpT$.
		Since $u$ is the only root of $N$ in $B_{r_+}(\ut)$ and $\trb{v-\ut}\leq r_+$ follows from a triangle inequality, this implies $v=u$ and (b).
		The regularity in (c) of the root $u\in V$ is usually not explicitly
		stated but a consequence of the feasibility of Newton's method around $\ut$.
		Indeed, \cite[step~(iii) in the proof of Thm~3]{CM:NewtonKantorovichTheorem2012} provides 
		for all $v\in V$ with $Lm \trb{v-\ut}<1$
		the bijectivity of $DN(v)$ and
		\begin{align*}
			\|DN(v)^{-1}\|_{L(V^*;V)}\leq \frac{m}{1-Lm\trb{v-\ut}}.%
		\end{align*}
	For $u\in B_{r_{-}}(\ut)$ with $Lm\trb{u-\ut}\leq Lmr_-=1-\sqrt{1-2Lm\varepsilon}< 1$,
	the equivalence of the inf--sup constant with the inverse of $\|DN(u)^{-1}\|_{L(V^*;V)}$ from
	\cref{lem:cont_inf_sup} results in
	\begin{align*}
		\beta_0\equiv \frac{\sqrt{1-2Lm\varepsilon}}{m}=\frac{1-Lmr_-}{m}\leq \inf_{v\in V}\sup_{\varphi\in
		V}\frac{DN(u;v,\varphi)}{\trb{v}\trb{\varphi}}.
	\end{align*}
	This is (c) and concludes the proof.
	\end{proof}
	Note that the statement in~\cref{thm:newton_kantorovich}.b is empty for non-positive 
	$\vrpT\leq 0$ and guarantees local uniqueness of the root in (a) if $\vrpT>0$ is positive.
	The general theory for the verification of \ref{ass:N} requires analytical estimates, and
	the explicit constants listed in \cref{tab:sol_verification}.
	Indeed,~\ref{ass:N1} bounds the Lipschitz constant of $DG$ and is usually obtained from analytic
	arguments, while~\ref{ass:N2} requires an explicit control of the discrete residual 
	$\trb{N(Jv_\nc)}_*$ that can be obtained as described next.
	The lower bound on the discrete inf--sup constant in~\ref{ass:N3} is more involved and
	discussed in~\cref{sec:The_continuous_inf--sup_constant} below.

\subsection{Explicit residual control}%
\label{sub:Morley_reference_discretisation}
The nonconforming reference discretisation from \cref{sub:Nonconforming reference discretisation}
allows a simple explicit computation of $\mut$ required in~\ref{ass:N2}
as a postprocessing of the discrete residual
\begin{align}\label{eqn:Resh_def}
	\operatorname{Res}_h\coloneqq\sup_{0\ne \varphi_\nc\in {V_\nc}}\frac{N_h(v_\nc; \varphi_\nc)}{\trbpw{\varphi_\nc}},
\end{align}
computable from a linear solve (cf.\ \cref{lem:computation_Resh} below for details).
Let $L_G,\mu_{\textrm{res} }\geq0$ satisfy
\begin{align}\label{eqn:G_Lipschitz}
	\trb{G_\pw(v_\nc) - G(Jv_\nc)}_*
	&\leq L_G\trbpw{(1-J)v_\nc},\\
	G_\pw(v_\nc; \varphi)- F(\varphi)
	&\leq \mu_{\textrm{res}}\trb{\varphi}\label{eqn:discrete_residual}
	&&\text{for all }\varphi\in (1-JI) V.%
\end{align}%
\begin{theorem}[residual control~\ref{ass:N2}]\label{thm:explicit_residual_control}
	Given $\operatorname{Res}_h,L_G,\mu_{\textrm{res} }$ 
	from~\eqref{eqn:Resh_def}--\eqref{eqn:discrete_residual}, 
	it holds
	\begin{align}\label{eqn:mut_N2_def}
		\trb{N(Jv_\nc)}_*\leq \mtT{}\coloneqq \operatorname{Res}_h + (1+L_G)\trbpw{(1-J)v_\nc} 
		+ \|J\|\;\mu_{\textrm{res}}.
	\end{align}
\end{theorem}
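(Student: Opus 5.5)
Fix an arbitrary test function $\varphi\in V$ with $\trb{\varphi}=1$. We bound $N(Jv_\nc;\varphi)=a(Jv_\nc,\varphi)+G(Jv_\nc;\varphi)-F(\varphi)$ and take the supremum at the end.

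First we replace $Jv_\nc$ by $v_\nc$ in both the bilinear and the semilinear term. This gives
\begin{align*}
	N(Jv_\nc;\varphi)=a_\pw(v_\nc,\varphi)+G_\pw(v_\nc;\varphi)-F(\varphi)
	+a_\pw(Jv_\nc-v_\nc,\varphi)+\bigl(G(Jv_\nc)-G_\pw(v_\nc)\bigr)(\varphi).
\end{align*}
Cauchy--Schwarz in $a_\pw$ bounds the fourth term by $\trbpw{(1-J)v_\nc}$. Estimate \eqref{eqn:G_Lipschitz} bounds the last term by $L_G\trbpw{(1-J)v_\nc}$. Together these produce the contribution $(1+L_G)\trbpw{(1-J)v_\nc}$.

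Next we split the remaining part $R(\varphi):=a_\pw(v_\nc,\varphi)+G_\pw(v_\nc;\varphi)-F(\varphi)$ via the decomposition $\varphi=JI\varphi+(1-JI)\varphi$.

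For the conforming part $JI\varphi$, the key identity is $a_\pw(v_\nc,JI\varphi)=a_\pw(v_\nc,I\varphi)$. It follows from \eqref{eqn:I1} applied to $v=JI\varphi\in V$, together with $IJI\varphi=I\varphi$ from \eqref{eqn:JI}: indeed $a_\pw(JI\varphi-IJI\varphi,v_\nc)=0$. Hence
\begin{align*}
	R(JI\varphi)=a_\pw(v_\nc,I\varphi)+G_\pw(v_\nc;JI\varphi)-F(JI\varphi)=N_h(v_\nc;I\varphi)\le\operatorname{Res}_h\trbpw{I\varphi}\le\operatorname{Res}_h,
\end{align*}
where the last step uses $\|I\|\le1$.

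For the remainder $\psi:=(1-JI)\varphi\in(1-JI)V$, the bilinear part vanishes. We have $I\psi=I\varphi-IJI\varphi=0$, so \eqref{eqn:I1} gives $a_\pw(\psi,v_\nc)=a_\pw(\psi-I\psi,v_\nc)=0$. Then \eqref{eqn:discrete_residual} yields $R(\psi)=G_\pw(v_\nc;\psi)-F(\psi)\le\mu_{\rm res}\trb{\psi}$.

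The main obstacle is to bound $\trb{\psi}$ by $\|J\|$, rather than by the cruder $1+\|J\|$ that a triangle inequality would give. The plan is to use the orthogonality \eqref{eqn:I1} together with $\trb{\varphi}=1$. We write $\psi=(\varphi-I\varphi)+(I\varphi-JI\varphi)$; here $\varphi-I\varphi$ is $a_\pw$-orthogonal to $V_\nc$, and $I\varphi-JI\varphi=(1-J)I\varphi$. If this does not give exactly $\|J\|$, we would interpret $\|J\|$ as the norm of $1-JI$ on $V$, i.e. of $J$ relative to the Pythagoras splitting: $\trb{(1-JI)\varphi}\le\|J\|\trb{\varphi}$. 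This bound is a standard consequence in this framework, since $1-JI$ is an idempotent with $\|1-JI\|=\|JI\|\le\|J\|$ by the Kato/Szyszkowski identity for nontrivial projections in Hilbert space, followed by $\|I\|\le1$.

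Summing the three contributions and taking the supremum over $\varphi$ gives \eqref{eqn:mut_N2_def}.
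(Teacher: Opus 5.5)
Your proof is correct and follows the paper's argument. You use the same split into $N_h(v_\nc;I\varphi)\le\operatorname{Res}_h$, the two consistency terms bounded by $(1+L_G)\trbpw{(1-J)v_\nc}$, and the $(1-JI)\varphi$ remainder. That remainder is controlled by \eqref{eqn:discrete_residual} and Kato's identity $\|1-JI\|=\|JI\|\le\|J\|\|I\|\le\|J\|$. The paper tests with the Riesz representative of $N(Jv_\nc)$ rather than an arbitrary unit $\varphi$, which makes no difference. Your hedging about reinterpreting $\|J\|$ is unnecessary, because the Kato identity already gives the bound with the genuine operator norm of $J$.
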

\begin{proof}
	The Riesz isometry for the Riesz representation 
	$\varphi\in V$ of $N(Jv_\nc)\in V^*$ reads
	\begin{align}\label{eqn:Riesz}
		\trb{N(Jv_\nc)}_*^2 &=\trb{\varphi}^2 = N(Jv_\nc;\varphi). 
	\end{align}
	Recall $a_\pw(v_\nc, (1-I)\varphi) = 0$ and $\trbpw{I \varphi}\leq\trb{\varphi}$
	from~\eqref{eqn:I1}. This and~\eqref{eqn:DSWP} result in
	\begin{align*}
		N_h(v_\nc;I\varphi) = a_\pw(v_\nc,\varphi) +G_\pw(v_\nc, JI \varphi) - F(JI \varphi).
	\end{align*}
	Algebraic manipulations with the two displayed identities verify
	\begin{align}
		\trb{N(Jv_\nc)}_*^2
			&= N_h(v_\nc; I \varphi) - a_\pw((1-J)v_\nc,\varphi) \label{eqn:NJvM_repr}
			+ G(Jv_\nc;\varphi) - G_\pw(v_\nc;\varphi)\\
			&\quad+ G_\pw(v_\nc;(1-JI)\varphi) - F((1-JI)\varphi)\eqqcolon\mathcal{I}_1+\mathcal{I}_2.\nonumber
	\end{align}
	It remains to separately estimate the two lines, 
	denoted by $\mathcal{I}_1$ and $\mathcal{I}_2$, from the right-hand
	side of~\eqref{eqn:NJvM_repr}.
	A Cauchy inequality, $\operatorname{Res}_h$ from~\eqref{eqn:Resh_def}, and $L_G$ control 
	$\mathcal{I}_1$ by
	\begin{equation}\label{eqn:NJvM_I1}
		\begin{aligned}
			\mathcal{I}_1
			&\coloneqq N_h(v_\nc; I \varphi) -
			a_\pw((1-J)v_\nc,\varphi)
			+ G(Jv_\nc;\varphi) - G_\pw(v_\nc;\varphi)\\
			&\leq\left(\operatorname{Res}_h+(1+L_G)\trbpw{(1-J)v_\nc}\right)\trb{\varphi}.
		\end{aligned}
	\end{equation}
	The right-inverse property~\eqref{eqn:JI} of
	$J$ makes $JI\in L(V;V)$ an oblique projection, i.e., $(JI)^2=JI$, 
	in the Hilbert space $(V,a)$. Hence Kato's lemma
	\cite{Szy:ManyProofsIdentity2006} implies the identity
	\begin{align}\label{eqn:Kato_1JIM}
		\|(1-JI)\|=\|JI\|\leq \|J\|
	\end{align}
	with $\|JI\|\leq\|J\|\|I\|$ and $\|I\|\leq1$
	in the last step.
	This and $\mu_{\textrm{res}}$ from~\eqref{eqn:discrete_residual}
	bound $\mathcal{I}_2$ as
	\begin{align}\label{eqn:NJvM_I2}
		\mathcal{I}_2\coloneqq G_\pw(v_\nc;(1-JI)\varphi) - F((1-JI)\varphi)\leq
		\mu_{res}\trb{(1-JI)\varphi}\leq\|J\|\,\mu_{res}\,\trb{\varphi}.
	\end{align}
	The combination of~\eqref{eqn:NJvM_repr}--\eqref{eqn:NJvM_I1} with~\eqref{eqn:NJvM_I2} concludes the proof.
\end{proof}

\section{The continuous inf--sup constant}%
\label{sec:The_continuous_inf--sup_constant}
The lower bound for the inf--sup constant of \(DN(\ut)\) required in~\ref{ass:N3}
is obtained by adapting numerical invertibility verification
to nonconforming discretisations based on a novel approximation error control.

\subsection{Approximation error control}%
\label{sec:Explicitly computable constants}
\newcommand{\Cweak}{\kappa_{\nc}(\T,s)}

The remaining parts of this paper consider the Sobolev setting $V=H^m_0(\Omega)$
from the introduction with isometry $A=(-\Delta)^m:V\to V^*$ 
over a Lipschitz domain $\Omega\subset\mathbb{R}^n, n\geq2$.

Consider for $0\leq s\leq m$, the fractional-order Sobolev space $H^s_0(\Omega)$ 
with some norm $\|\bullet\|_s\approx\|\bullet\|_{H^s(\Omega)}$
equivalent to the standard norm
$\|\bullet\|_{H^s(\Omega)}$, and endow
its dual $H^{-s}(\Omega)$ with the corresponding operator norm
$\|\bullet\|_{s,*}$.
The nonconforming discretisation $V_\nc$
from~\cref{sub:Nonconforming reference discretisation} is associated to an
underlying admissible triangulation $\mathcal{T}$ of $\Omega$ 
with maximal mesh-size $h_{\max}$ and $\|J\|\approx 1\approx \LJ$.
Assume standard approximation properties known from classical nonconforming schemes, namely
\begin{align}\label{eqn:kappa}
	\|v\|_s&\leq \kappa_{m-s} h_{\max}^{m-s}\trb{v}
	\qquad\text{for all }v\in V \text{ with }Iv=0,\\\label{eqn:I_reg}
	\trbpw{(1-I)v}&\lesssim h_{\max}\|v\|_{H^{m+1}(\Omega)}
	\qquad\text{for all }v\in V\cap H^{m+1}(\Omega).
\end{align}
Here $0<\kappa_{m-s}$ in~\eqref{eqn:kappa} should be explicitly known, 
which motivates the flexibility in the norm $\|\bullet\|_s$ 
(cf.~\cref{rem:fractional_discretisation_error}), while
$\|\bullet\|_s=|\bullet|_{H^s(\Omega)}$ is a natural choice if $s\in\mathbb{N}_0$.

A key ingredient to the further analysis is a uniform, computable bound $\kappa(\mathcal{T},s)$
on the approximation error of $H^{-s}(\Omega)$ sources. 
More precisely, $\kappa(\mathcal{T},s)$ should satisfy
\begin{align}\label{eqn:kappa_Ts}
	\trb{z-z_\nc}_\pw\leq\kappa(\T,s)\|F\|_{s,*} 
	\qquad\text{for all }F\in H^{-s}(\Omega)
\end{align}
for $z\in V$ with $(-\Delta)^m z = F$ 
and its nonconforming approximation $z_\nc\in{V_\nc}$ with
\begin{align}\label{eqn:zM_approx_def}
	a_\pw(z_\nc, v_\nc) = F(Jv_\nc)\qquad\text{for all }v_\nc\in{V_\nc}.
\end{align}
The value $\kappa(\T,s)$ in~\eqref{eqn:kappa_Ts} is also known as a priori error estimator%
~\cite{LO:VerifiedEigenvalueEvaluation2013} and satisfies $\kappa(\mathcal{T},s)\to 0$ as $h_{\max}\to 0$ if $s<m$.
Its computation below is based on a discrete counterpart.
To this end, consider discrete solution operator 
$A_{h,s}^{-1}:{V_\nc}\to {V_\nc}$ defined by
\begin{align}\label{eqn:Ahs_def}
	a_\pw(A_{h,s}^{-1}f_\nc, \varphi_\nc) = \left\langle Jf_\nc,J\varphi_\nc\right\rangle_s\qquad\text{for all
	}f_\nc,\varphi_\nc\in{V_\nc}.
\end{align}
with scalar product $\left\langle \bullet,\bullet\right\rangle_s$ associated 
to the norm $\|\bullet\|_s\coloneqq\sqrt{\left\langle \bullet,\bullet\right\rangle_s}$.
The discrete version 
\begin{align} \label{eqn:Cdual_def}
	0<\Cweak\coloneqq \max_{0\ne g_\nc\in{V_\nc}}\min_{v_\nc\in{V_\nc}}
	\frac{\trbpw{A_{h,s}^{-1}g_\nc-Jv_\nc}}{\|Jg_\nc\|_{s}}
\end{align}
of the approximation error control~\eqref{eqn:kappa_Ts} is interpreted as a finite-dimensional eigenvalue problem in \cref{sub:computation_kappaM_Ts} and controls the approximation error  for ``discrete'' sources.
Let $\sigma_{\mathrm{reg}}>0$ denote an index of elliptic regularity of $\Omega$: 
For any $F\in H^{-m+s}(\Omega)$ with $0\leq s\leq \sigma_{\mathrm{reg}}$,
it holds $(-\Delta)^{-m}F\in H^{m+s}(\Omega)$ and 
$\|(-\Delta)^{-m}F\|_{H^{m+s}(\Omega)}\lesssim \|F\|_{H^{-m+s}(\Omega)}$.
\begin{lemma}[approximation error control for discrete sources]\label{lem:qb_discrete}
	Given any source $F\in H^{-s}(\Omega)$ with $0\leq s\leq m$, 
	consider ${z_{\mathrm{nc}}}\in {V_\nc}$
	from~\eqref{eqn:zM_approx_def} and the solution $z\in V$ to
	\begin{align}\label{eqn:z_def}
		a(z,w) &= F(J\Inc w)\qquad \text{for all }w\in V.
	\end{align}
	Then $\Cweak$ from~\eqref{eqn:Cdual_def} satisfies $\Cweak\lesssim h_{\max}^\sigma$ with
	$\sigma\coloneqq\min\{1, m-s,\sigma_{\rm reg}\}$ and
	\begin{align}\label{eqn:kappaM_T_s}
		\trbpw{z-{z_{\mathrm{nc}}}} \leq\Cweak \|F\|_{s,*}.
	\end{align}
\end{lemma}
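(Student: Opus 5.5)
The plan is to reduce \eqref{eqn:kappaM_T_s} to an orthogonality of the error $z-z_\nc$ onto all of $V$, and then to read off the approximation statement from the definition \eqref{eqn:Cdual_def} via a discrete Riesz representation of $F\circ J$.

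\emph{Step 1 (orthogonality).} Let $w\in V$. By \eqref{eqn:z_def}, $a_\pw(z,w)=a(z,w)=F(JIw)$. By \eqref{eqn:I1} applied with $v=w$ and the discrete test function $z_\nc$, followed by \eqref{eqn:zM_approx_def} with $v_\nc=Iw$, we get $a_\pw(z_\nc,w)=a_\pw(z_\nc,Iw)=F(JIw)$. Hence
\begin{align*}
	a_\pw(z-z_\nc,w)=0\qquad\text{for all }w\in V.
\end{align*}
Consequently, for any $v_\nc\in V_\nc$ we have $z-Jv_\nc\in V$, so $\trbpw{z-z_\nc}^2=a_\pw(z-z_\nc,Jv_\nc-z_\nc)$. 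A Cauchy inequality then yields
\begin{align*}
	\trbpw{z-z_\nc}\leq \min_{v_\nc\in V_\nc}\trbpw{z_\nc-Jv_\nc}.
\end{align*}

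\emph{Step 2 (discrete Riesz representative).} Since $J$ is injective by \eqref{eqn:JI}, $\langle J\bullet,J\bullet\rangle_s$ is a scalar product on the finite-dimensional space $V_\nc$. There is therefore a unique $g_\nc\in V_\nc$ with $\langle Jg_\nc,J\varphi_\nc\rangle_s=F(J\varphi_\nc)$ for all $\varphi_\nc\in V_\nc$. Comparing \eqref{eqn:Ahs_def} with \eqref{eqn:zM_approx_def} shows $z_\nc=A_{h,s}^{-1}g_\nc$. Testing with $\varphi_\nc=g_\nc$ gives $\|Jg_\nc\|_s^2=F(Jg_\nc)\leq\|F\|_{s,*}\|Jg_\nc\|_s$, so $\|Jg_\nc\|_s\le\|F\|_{s,*}$. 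Combined with Step 1 and \eqref{eqn:Cdual_def}, this gives
\begin{align*}
	\trbpw{z-z_\nc}\leq \Cweak\|Jg_\nc\|_s\leq\Cweak\|F\|_{s,*},
\end{align*}
which is \eqref{eqn:kappaM_T_s}. The case $g_\nc=0$ is trivial.

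\emph{Step 3 (asymptotics $\Cweak\lesssim h_{\max}^\sigma$), the main work.} Fix $g_\nc\ne0$ and let $y\in V$ solve $(-\Delta)^m y=\langle Jg_\nc,\bullet\rangle_s\in H^{-s}(\Omega)=H^{-m+(m-s)}(\Omega)$. This source has norm $\|Jg_\nc\|_s$, so elliptic regularity gives $\|y\|_{H^{m+\sigma'}(\Omega)}\lesssim\|Jg_\nc\|_s$ with $\sigma'=\min\{m-s,\sigma_{\rm reg}\}$.

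Choosing $v_\nc=Iy$ in \eqref{eqn:Cdual_def}, the triangle inequality together with $\|J\|\approx1$ and \eqref{eqn:J_qo} reduce the task to bounding $\trbpw{y-z_\nc}$. For this I would do a medius-type consistency analysis. For $\varphi_\nc\in V_\nc$, write
\begin{align*}
	a_\pw(y,\varphi_\nc)=a(y,J\varphi_\nc)+a_\pw(y,\varphi_\nc-J\varphi_\nc),\qquad a(y,J\varphi_\nc)=\langle Jg_\nc,J\varphi_\nc\rangle_s=a_\pw(z_\nc,\varphi_\nc).
\end{align*}
In the remainder, the part $a_\pw(Iy,\varphi_\nc-J\varphi_\nc)$ vanishes, because \eqref{eqn:I1} and \eqref{eqn:JI} give $a_\pw(Iy,J\varphi_\nc)=a_\pw(Iy,IJ\varphi_\nc)=a_\pw(Iy,\varphi_\nc)$. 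Hence
\begin{align*}
	a_\pw(Iy-z_\nc,\varphi_\nc)=a_\pw(y-Iy,\varphi_\nc-J\varphi_\nc)\lesssim\trbpw{(1-I)y}\,\trbpw{\varphi_\nc},
\end{align*}
and therefore $\trbpw{y-z_\nc}\lesssim\trbpw{(1-I)y}$.

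The remaining estimate $\trbpw{(1-I)y}\lesssim h_{\max}^{\min\{1,\sigma'\}}\|y\|_{H^{m+\sigma'}(\Omega)}$ follows by interpolating between the stability $\trbpw{(1-I)v}\le\trb v$ and \eqref{eqn:I_reg}. This interpolation step is where I expect the main technical care to be needed. Alternatively, in the case $s<m$ one could argue via \eqref{eqn:kappa} by duality. Taking the supremum over $g_\nc$ then gives $\Cweak\lesssim h_{\max}^\sigma$.
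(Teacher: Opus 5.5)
Your proof is correct and follows the paper's argument: orthogonality of $z-z_\nc$ to $V$ plus Pythagoras, a discrete representative of $F\circ J$ bounded by $\|F\|_{s,*}$, and, for the rate, elliptic regularity for the source $\langle Jg_\nc,\bullet\rangle_s$, quasi-best approximation, interpolation error and the quasi-optimality of $J$. The differences are minor. You obtain the representative directly in $(V_\nc,\langle J\bullet,J\bullet\rangle_s)$ rather than through the Galerkin projection $G_s$ of the continuous Riesz representative. You also prove $\trbpw{y-A_{h,s}^{-1}g_\nc}\lesssim\trbpw{(1-I)y}$ inline instead of citing it, and you make explicit the interpolation step to a fractional rate $\sigma<1$, which the paper leaves implicit when it invokes \eqref{eqn:I_reg}.
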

\begin{proof}[Proof]
	The proof consists of 3 steps.

	\medskip\noindent\emph{Step 1} considers the right-hand side as a discrete function.
	The Riesz representation of $F\in H^{-s}(\Omega)$ in the Hilbert space $(H^s_0(\Omega),
	\left\langle \bullet,\bullet\right\rangle_s)$ is the unique function $f\in H^s_0(\Omega)$ with $F =
	\left\langle f,\bullet\right\rangle_s$.
	Let $G_s:H^s_0(\Omega)\to J{V_\nc}$ be the Galerkin projection onto $J{V_\nc}$, i.e.,
	\begin{align}\label{eqn:Gs_Galerkin}
		\left\langle (1-G_s)g, Jw_\nc\right\rangle_s=0\qquad\text{for all }g\in H^s_{0}(\Omega),\wM\in{V_\nc}.
	\end{align}
	Since $G_s$ is an orthogonal projection and $G_sf\in J{V_\nc}$, there exists a Morley function $f_\nc\in{V_\nc}$ with $G_sf=Jf_\nc$
	and 
	\begin{align}\label{eqn:F_norms_s}
		\|Jf_\nc\|_s=\|G_s f\|_s\leq\|f\|_s= \|F\|_{s,*}
	\end{align}
	with the Riesz isometry in the last step.
	Since $F(Jw_\nc)=\left\langle Jf_\nc, Jw_\nc\right\rangle_s$ for all $w_\nc\in {V_\nc}$ by \eqref{eqn:Gs_Galerkin},
	$z_\nc\in{V_\nc}$ from \eqref{eqn:zM_approx_def} is equal to
	$z_\nc=A_{h,s}^{-1}f_\nc$ from~\eqref{eqn:Ahs_def}, i.e.,
		\begin{align}\label{eqn:z_M_alt}
			a_\pw(z_\nc,w_\nc) &= \left\langle Jf_\nc, Jw_\nc\right\rangle_s &&\text{for all }w_\nc\in {V_\nc}.
		\end{align}

	\medskip\noindent\emph{Step 2} verifies the approximation error control.
	Since $a_\pw\left(z_\nc, (1-I)w\right)=0$ vanishes for all $w\in V$ by~\eqref{eqn:I1},
	the solution properties \eqref{eqn:zM_approx_def} of $z_\nc$ and 
	\eqref{eqn:z_def} of $z$ reveal
	\begin{align}\label{eqn:z_zM_orthogonality}
		a_\pw(z-{z_{\mathrm{nc}}}, w) = a(z, w) - a_\pw({z_{\mathrm{nc}}}, \Inc w) = 0\qquad\text{for all }w\in V.
	\end{align}
	Hence the Pythagoras theorem and $J{V_\nc}\subset V$ provide
	\begin{align*}
		\trbpw{z-{z_{\mathrm{nc}}}}=\min_{v\in V}\trbpw{{z_{\mathrm{nc}}}-v}\leq \min_{\vM\in{V_\nc}}\trbpw{{z_{\mathrm{nc}}}-J\vM}.
	\end{align*}
	This, the definition $\Cweak$ from \eqref{eqn:Cdual_def} with $z_\nc=A_{h,s}^{-1}f_\nc$ from step 1, and \eqref{eqn:F_norms_s} verify
	\begin{align*}
		\trbpw{z-{z_{\mathrm{nc}}}}\leq\Cweak\|J f_\nc\|_s\leq\Cweak\|F\|_{s,*}.
	\end{align*}

	\medskip\noindent\emph{Step 3} establishes $\Cweak\lesssim h_{\rm max}^{\sigma}$.
	Given any $g_\nc\in{V_\nc}$, let $\overline{z}\coloneqq (-\Delta)^{-m}\overline{F}\in V$ denote the exact weak solution for the right-hand side $\overline{F}\coloneqq\left\langle
	Jg_\nc,\bullet\right\rangle_s\in H^{-s}(\Omega)$.
	The elliptic regularity of $\Omega$ provides 
	$\overline{z}\in H^{m+\sigma}(\Omega)$ with $\|\overline{z}\|_{H^{m+\sigma}(\Omega)}\lesssim
	\|\overline{F}\|_{H^{\sigma-m}(\Omega)}$ so that 
	\begin{align}\label{eqn:elliptic_regularity_barz}
		\|\overline{z}\|_{H^{m+\sigma}(\Omega)}\lesssim \|\overline{F}\|_{H^{-s}(\Omega)}\approx\|\overline{F}\|_{s,*}=\|Jg_\nc\|_s
	\end{align}
	follows from a Sobolev inequality with $s\leq m-\sigma$ and the norm equivalence 
	$\|\bullet\|_{H^{s}(\Omega)}\approx \|\bullet\|_{s}$.
	Since $a_\pw\left(A_{h,s}^{-1}g_\nc, \bullet\right)=\overline{F}\circ J\in {V_\nc}^*$ 
	from~\eqref{eqn:Ahs_def}, the a priori
	error analysis~\cite[thm.~2.5]{CG:AdaptiveMorleyFEM2025}
	(also~\cite{CN:PrioriPosterioriError2021,CN:LowestorderEquivalentNonstandard2022})
	reveals $A_{h,s}^{-1}g_\nc$ as a quasi-best approximation of $\overline{z}$. Hence 
	\begin{align*}
		\trbpw{\overline{z}-A_{h,s}^{-1}g_\nc}\lesssim\trb{(1-I)\overline{z}}_\pw\lesssim 
		h_{\max}^\sigma\|\overline{z}\|_{H^{m+\sigma}(\Omega)}\lesssim h_{\max}^\sigma\|Jg_\nc\|_s
	\end{align*}
	follows with~\eqref{eqn:I_reg} and \eqref{eqn:elliptic_regularity_barz} in the last steps.
	The quasi-optimality~\eqref{eqn:J_qo} of $J$ verifies
	\begin{align*}
		\min_{\vM\in{V_\nc}}\trbpw{A_{h,s}^{-1}g_\nc-J\vM}
		\leq \trbpw{(1-J)A_{h,s}^{-1}g_\nc}\leq\LJ\trbpw{\overline{z}-A_{h,s}^{-1}g_\nc}.
	\end{align*}
	Since $g_\nc\in{V_\nc}$ was arbitrary, the combination of the two previously displayed estimates result in
	$\Cweak\lesssim h_{\rm
	max}^\sigma$ and conclude the proof.
\end{proof}

\begin{theorem}[approximation error]\label{thm:discretisation_error}
	Given $0\leq s\leq m$ and $\sigma\coloneqq\min\{1, m-s,\sigma_{\mathrm{reg}}\}$, 
	any $F\in H^{-s}(\Omega)$ satisfies~\eqref{eqn:kappa_Ts} 
	for $z\coloneqq (-\Delta)^{-m}F\in V$ 
	and $z_\nc\in {V_\nc}$ from~\eqref{eqn:zM_approx_def} with
	\begin{align}
		\kappa(\T,s)\coloneqq\sqrt{h_{\max}^{2(m-s)}\kappa_{m-s}^2\|J\|^2+\Cweak^2}\lesssim
		h_{\max}^\sigma.\label{eqn:kappa_Ts_def}
	\end{align}
\end{theorem}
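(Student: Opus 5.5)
Split the error through the auxiliary solution $\widetilde z\in V$ of \eqref{eqn:z_def}, i.e.\ $a(\widetilde z,w)=F(JIw)$ for all $w\in V$, for which \cref{lem:qb_discrete} already provides $\trbpw{\widetilde z-z_\nc}\le\Cweak\|F\|_{s,*}$ and the orthogonality \eqref{eqn:z_zM_orthogonality}, namely $a_\pw(\widetilde z-z_\nc,w)=0$ for all $w\in V$. The exact solution satisfies $a(z,w)=F(w)$, so the difference $e\coloneqq z-\widetilde z\in V$ solves
\begin{align*}
	a(e,w)=F((1-JI)w)\qquad\text{for all }w\in V.
\end{align*}
The plan is to show that $z-z_\nc=e+(\widetilde z-z_\nc)$ is an $a_\pw$-orthogonal splitting. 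The second summand is $a_\pw$-orthogonal to all of $V$ by \eqref{eqn:z_zM_orthogonality}, and $e\in V$, hence $a_\pw(e,\widetilde z-z_\nc)=0$. Pythagoras then gives $\trbpw{z-z_\nc}^2=\trb{e}^2+\trbpw{\widetilde z-z_\nc}^2$. This explains the square-root-of-sum-of-squares form of $\kappa(\T,s)$ in \eqref{eqn:kappa_Ts_def}.

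It remains to bound $\trb{e}\le h_{\max}^{m-s}\kappa_{m-s}\|J\|\,\|F\|_{s,*}$. Test the equation for $e$ with $w=e$. This gives $\trb{e}^2=F((1-JI)e)\le\|F\|_{s,*}\|(1-JI)e\|_s$. The key observation is that $I(1-JI)e=Ie-IJIe=0$ by the right-inverse property \eqref{eqn:JI}, so \eqref{eqn:kappa} applies to $v=(1-JI)e\in V$ and yields
\begin{align*}
	\|(1-JI)e\|_s\le\kappa_{m-s}h_{\max}^{m-s}\trb{(1-JI)e}.
\end{align*}
Kato's identity \eqref{eqn:Kato_1JIM} bounds $\trb{(1-JI)e}\le\|J\|\trb{e}$. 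Dividing by $\trb{e}$ gives the claimed bound, and combining it with \cref{lem:qb_discrete} yields \eqref{eqn:kappa_Ts}.

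The asymptotic statement $\kappa(\T,s)\lesssim h_{\max}^\sigma$ follows from $\|J\|\approx1$, from $h_{\max}^{m-s}\le C h_{\max}^{\sigma}$ (since $\sigma\le m-s$ and $h_{\max}$ is bounded by the diameter of $\Omega$), and from $\Cweak\lesssim h_{\max}^\sigma$ in \cref{lem:qb_discrete}. The main obstacle is recognising that the nonconforming test $F\circ J$ in \eqref{eqn:zM_approx_def} must be matched with the intermediate problem \eqref{eqn:z_def}, so that the consistency error $(1-JI)$ lies in $\ker I$, where \eqref{eqn:kappa} provides the gain $h_{\max}^{m-s}$. The other delicate step is checking that the cross term in the Pythagoras identity really vanishes, which relies on $e\in V$ and not merely $e\in V_\pw$.
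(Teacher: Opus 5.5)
Your proof is correct and matches the paper's argument up to renaming: the paper's exact solution $v$ and auxiliary solution $z$ from \eqref{eqn:z_def} are your $z$ and $\widetilde z$. The paper uses the same steps, namely the Pythagoras split via the orthogonality \eqref{eqn:z_zM_orthogonality}, the bound on the consistency term $F((1-JI)w)$ through $I(1-JI)=0$, \eqref{eqn:kappa} and Kato's identity \eqref{eqn:Kato_1JIM}, and the asymptotics from $\|J\|\approx 1$, $\sigma\le m-s$ and \cref{lem:qb_discrete}.
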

\begin{proof}[Proof]
	Given any $F\in H^{-s}(\Omega)$, let $v\in V$ solve $(-\Delta)^m v=F$. 
	\Cref{lem:qb_discrete} verifies for $z_\nc\in{V_\nc}$ from~\eqref{eqn:zM_approx_def} and $z\in V$ from
	\eqref{eqn:z_def} that
	\begin{align}\label{eqn:z_zM_application}
		\trb{z-z_\nc}_\pw\leq \Cweak\|F\|_{s,*}.
	\end{align}
	The orthogonality $a_\pw(z-z_\nc, w) = 0$ from \eqref{eqn:z_zM_orthogonality} for $w\coloneqq v-z\in V$ provides
	\begin{align}\label{eqn:v_z_zM_split}
		\trb{v-z_\nc}_\pw^2=\trb{w}^2+\trb{z-z_\nc}_\pw^2.
	\end{align}	
	Since $v\in V$ is the weak solution to $(-\Delta)^m v=F$ and $z\in V$ solves \eqref{eqn:z_def},
	it holds
	\begin{align*}
		\trb{w}^2= a(v, w) - a(z, w) 
			= F\left((1-JI)w\right)
			&\leq \|F\|_{s,*}\|(1-JI)w\|_{s}.
	\end{align*}
	The right-inverse property of $J$ provides $I(1-JI)=0$.
	Hence~\eqref{eqn:kappa}
	and~\eqref{eqn:Kato_1JIM} show
	\begin{align}\label{eqn:w_s_bound}
		|(1-JI)v|_{H^s(\Omega)}\leq \kappa_{m-s}h_{\max}^{m-s}\trb{(1-JI)v}
		\leq\kappa_{m-s}h_{\max}^{m-s}\|J\|\trb{v}. %
	\end{align}%
	The combination of \eqref{eqn:z_zM_application}--\eqref{eqn:w_s_bound} reveals
	\begin{align*}
		\trbpw{v-z_\nc}^2\leq\left(\kappa_{m-s}^2h_{\max}^{2(m-s)}\|J\|^2+\Cweak^2\right)\|F\|_{s,*}^2.
	\end{align*}
	Recall $\|J\|\approx 1$.
	Since $\kappa_\nc(\T, s)\lesssim h_{\max}^\sigma$ by~\cref{lem:qb_discrete} 
	and $h_{\max}^{m-s}\lesssim h_{\max}^\sigma$ from $\sigma\leq m-s$ 
	imply $\kappa(\T,s)\lesssim h_{\max}^\sigma$ from~\eqref{eqn:kappa_Ts_def}, 
	this concludes the proof.
\end{proof}
\begin{remark}[the constant $\kappa_{m-s}$]\label{rem:fractional_discretisation_error}
	For classical nonconforming schemes such as
	(modified) Crouzeix--Raviart FEM for $m=1$ and Morley FEM for $m=2$,
	the interpolation error control~\eqref{eqn:kappa} is standard:
	Analytical~\cite{CG:GuaranteedLowerEigenvalue2014,Ma:ConstantsL2Error2021} and
	numerical~\cite{LSL:OptimalEstimationFujino2019,CG:RateoptimalHigherorderAdaptive2024} 
	bounds for $\kappa_{m-s}$ (with $\kappa_0=1$) are known if $s\in\mathbb{N}_0$
	with $s\leq m$ and $\|\bullet\|_{s}=|\bullet|_{H^s(\Omega)}$,
	and follow by interpolation if $s\not\in\mathbb{N}_0$.
	However, explicit constants in non-interpolation norms are more involved in the latter case.
	If $\|\bullet\|_s$ is the piecewise seminorm over $\mathcal{T}$,
	a natural approach is to compute $\kappa_{m-s}$ as the maximum of $\kappa_{m-s,T}$
	from the local problems
	\begin{align*}%
		|(1-I) v|_{H^s(T)}\leq 
		\kappa_{m-s,T}h_T^{m-s}\|D^mv\|_{L^2(T)}\qquad\text{for all }v\in V\text{ and }T\in\T.
	\end{align*}
	that only depend on the shape of $T\in\mathcal{T}$,
	see~\cite{LSL:OptimalEstimationFujino2019}
	and~\cite[subsec.~4.6]{CG:RateoptimalHigherorderAdaptive2024}
	for details.

	A similar localisation is not immediate 
	for the Sobolev-Slobodeckij norm $\|\bullet\|_s=|\bullet|_{H^s(\Omega)}$
	with non-integer $s\in\mathbb{R}$, where the best possible constant $\kappa_{m-s}$ 
	in~\eqref{eqn:kappa} relates to a nonlocal (eigenvalue) problem.
\end{remark}

\subsection{Guaranteed lower bound on the inf--sup constant}%
\label{sub:Lower_bound_continuous_inf--sup_constant}
The following main result of this section introduces an explicit lower bound of the 
inf--sup constant of $DN(\ut)$ at some $\ut\in V$ from a 
postprocessing of its discrete counterpart
\begin{align}\label{eqn:bht_def}
	\bht\coloneqq\inf_{0\ne w_\nc\in {V_\nc}}\sup_{0\ne \varphi_\nc\in {V_\nc}}\frac{DN_h(\ut;w_\nc,
	\varphi_\nc)}{\trbpw{w_\nc}\trbpw{\varphi_\nc}}.
\end{align}
\Cref{lem:computation_bht} below writes $\bht$ as an eigenvalue in a discrete eigenvalue problem.
Let $\|M\|_2$
denote the spectral norm of $M\in\R^{2\times 2}$, induced by the Euclidean norm $|\bullet|$ on $\R^2$.
\begin{theorem}[lower bound on continuous inf--sup constant]\label{thm:continuous_inf_sup}
	Given $\ut\in V$ and $0\leq s\leq m$ such that 
	$D\GGamma_\pw(\ut)\in L(V_\pw;H^{-s}(\Omega))$,
	let $\Constb{1}, \Constb{2}, \Constb{3}>0$ 
	satisfy
	\begin{align}\label{eqn:Constb_1}
		\|D\GGamma_{\pw}(\ut;v_\nc)\|_{s,*}
		&\leq \Constb{1}\trbpw{v_\nc}, \\%&&\text{for all }v_\nc\in{V_\nc}.
		\|D\GGamma_{\pw}(\ut;(1-I)v)\|_{s,*}
		&\leq \Constb{2}\trbpw{(1-I)v},\label{eqn:Constb_2}\\ %
		\trb{D\GGamma_{\pw}(\ut;(1-I)v)}_*
	&\leq \Constb{3}\trbpw{(1-I)v}\label{eqn:Constb_3} %
	\end{align}
	for all $v\in V$ and $v_\nc\in{V_\nc}$.
	Define $M,N\in \R^{2\times 2}$ with $\Constb{\T}\coloneqq\kappa(\T,
s)\Constb{2}+\Constb{3}\|J\|$ by
	\begin{equation}\label{eqn:MN_def}
		\begin{aligned}
			N&\!\coloneqq\! \begin{pmatrix}
				1&\Constb{\T}\\
				0&\bht
			\end{pmatrix},\qquad
			M\!\coloneqq\! \begin{pmatrix}
				\left(1+\Constb{\T}\right)\Constb{1}&\Constb{\T}\Constb{2}\\
				\bht\Constb{1}&\bht\Constb{2}
			\end{pmatrix}.
		\end{aligned}
	\end{equation}
	(a) Suppose $\kappa(\T,s)\|M\|_2<\bht$ holds, then $DN(\ut)$ satisfies the inf--sup condition
	\begin{align*}
		\btT\coloneqq\frac{\bht-\kappa(\T,s)\|M\|_2}{\|N\|_2}\leq\inf_{0\ne v\in V}\sup_{0\ne \varphi\in
		V}\frac{DN(\ut;v,\varphi)}{\trb{v}\trb{\varphi}}.
	\end{align*}
	(b) If additionally $0\leq s<m$, 
	then $DN(\ut)$ is bijective and \ref{ass:N3} holds with $\btT$ from (a).
\end{theorem}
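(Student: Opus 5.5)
The plan is to argue by duality. Fix $v\in V$ with $\trb{v}=1$ and set $r\coloneqq DN(\ut;v)\in V^*$, so that $\sup_\varphi DN(\ut;v,\varphi)/\trb{\varphi}=\trb{r}_*$. The goal is a lower bound $\trb{r}_*\geq\btT$. Use the Pythagoras identity from \eqref{eqn:I1} to split $v$ into the two coordinates
\[
x\coloneqq\trbpw{Iv},\qquad y\coloneqq\trbpw{(1-I)v},\qquad x^2+y^2=1.
\]
I aim for a vector inequality of the form
\[
N\binom{x}{y}\leq \trb{r}_*\,e+\kappa(\T,s)\,M\binom{x}{y}
\]
componentwise, with a unit-size vector $e$; the precise normalisation is to be tracked. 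Taking Euclidean norms then gives $\bht\leq\sigma_{\min}(N)\le\ldots$; more directly, one can exploit the triangular structure of $N$ to obtain $\bht-\kappa(\T,s)\|M\|_2\leq\|N\|_2\trb{r}_*$, which is (a).

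First I derive the second row, the bound for $y$. Write $A v=r-DG_\pw(\ut;v)$, split $DG_\pw(\ut;v)=DG_\pw(\ut;Iv)+DG_\pw(\ut;(1-I)v)$, and set $z\coloneqq A^{-1}DG_\pw(\ut;Iv)$. The $Iv$-part lies in $H^{-s}(\Omega)$ with norm at most $\Constb{1}x$ by \eqref{eqn:Constb_1}. Let $z_\nc$ be its nonconforming approximation from \eqref{eqn:zM_approx_def}; then \cref{thm:discretisation_error} gives $\trbpw{z-z_\nc}\leq\kappa(\T,s)\Constb{1}x$. The remaining pieces are $A^{-1}r$, which has norm $\trb{r}_*$, and $A^{-1}DG_\pw(\ut;(1-I)v)$, which is controlled by $\Constb{3}y$ in $V$ through \eqref{eqn:Constb_3}, and also by $\kappa\Constb{2}y$ up to discrete parts through \eqref{eqn:Constb_2}. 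Since $(1-I)$ annihilates discrete functions and $\trbpw{(1-I)w}\leq\trb{w}$, applying $(1-I)$ to the identity for $v$ should produce a bound of the form $y\leq\trb{r}_*+\kappa(\cdots)x+\kappa(\cdots)y$.

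Next I derive the first row, the bound for $x$, from the discrete inf--sup constant \eqref{eqn:bht_def}. Choose $\varphi_\nc$ with $\trbpw{\varphi_\nc}=1$ and $DN_h(\ut;Iv,\varphi_\nc)\geq\bht x$. Then rewrite
\[
DN_h(\ut;Iv,\varphi_\nc)=a_\pw(Iv,\varphi_\nc)+DG_\pw(\ut;Iv,J\varphi_\nc).
\]
By \eqref{eqn:I1} the first term equals $a(v,J\varphi_\nc)$ after subtracting $a_\pw(v,(1-J)\varphi_\nc)$, which vanishes when tested against $(1-I)$-type functions; I expect $a_\pw(Iv,\varphi_\nc)=a(v,J\varphi_\nc)$ to hold exactly via $IJ=\mathrm{id}$. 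Hence $DN_h(\ut;Iv,\varphi_\nc)=r(J\varphi_\nc)-DG_\pw(\ut;(1-I)v,J\varphi_\nc)$. The cross term is estimated using $\|J\|$ together with the splitting $J\varphi_\nc=$ discrete part plus $(1-JI)$-part, where \eqref{eqn:kappa} gains the factor $\kappa$ and the $H^{-s}$ bound \eqref{eqn:Constb_2} enters. The resulting bound has the shape $\bht x\leq\trb{r}_*\|J\|+\Constb{\T}\,\bht\,y+\kappa(\ldots)$, which reproduces the $(1,\Constb{\T})$ and $(0,\bht)$ entries of $N$ once the inequalities are arranged with $\bht$ scaling. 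The main obstacle is exactly this bookkeeping: matching the entries of $N$ and $M$ so that all $\kappa$-free couplings land in $N$ and all $\kappa$-weighted terms land in $M$, while keeping only $\trb{r}_*$ on the right. I would first try testing the continuous residual with $A^{-1}$ of the discrete Riesz representative and compare $Iv$ against $z_\nc$ plus the discrete solution of $r\circ J$.

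For (b), assume $s<m$. The embedding $H^{-s}(\Omega)\hookrightarrow V^*$ is compact, so $DG(\ut)$ is compact. Hence $DN(\ut)=A+DG(\ut)$ is Fredholm of index $0$, exactly as in Step~1 of \cref{thm:reg_roots}. The inf--sup bound from (a) gives injectivity, and index $0$ then yields bijectivity. Together with the inf--sup bound this is precisely \ref{ass:N3} with $\btT$.
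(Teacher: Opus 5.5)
Your part (b) coincides with the paper's argument. Part (a), however, has a genuine gap: neither your target inequality nor your first-row identity is right.

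\textbf{The target inequality.} The shape $N(x,y)^\top\le\trb{r}_*e+\kappa(\T,s)M(x,y)^\top$ puts $N$ on the coordinates of $v$. Suppose you had it with $|e|=1$. Taking norms would only give $\trb{r}_*\ge\sigma_{\min}(N)-\kappa(\T,s)\|M\|_2=\bht/\|N\|_2-\kappa(\T,s)\|M\|_2$, using $\det N=\bht$. This is not $\btT$; it is smaller, since $\|N\|_2\ge1$. The appeal to the ``triangular structure'' of $N$ does not bridge this.

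\textbf{The first-row identity.} The identity $a_\pw(Iv,\varphi_\nc)=a(v,J\varphi_\nc)$ is false. From \eqref{eqn:I1} and $IJ=\id$ one gets
\[
a_\pw(Iv,\varphi_\nc)=a(v,J\varphi_\nc)-a_\pw\bigl((1-I)v,(1-I)J\varphi_\nc\bigr).
\]
The last term is nonzero in general. By Pythagoras for $J\varphi_\nc\in V$ it has size up to $\sqrt{\|J\|^2-1}\,\trbpw{(1-I)v}$. Together with $r(J\varphi_\nc)\le\|J\|\trb{r}_*$, your first row picks up $\kappa$-free couplings carrying factors of $\|J\|$. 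These cannot produce $N_{11}=1$ and $N_{12}=\Constb{\T}$. At best, repairing your route gives a different, weaker constant, not the stated $\btT$.

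\textbf{The missing idea.} Never test the residual with $J$, and let $N$ act on the components of the Riesz representative of the residual. Set $z\coloneqq A^{-1}DG(\ut;v)$ for the \emph{full} $v$, and $\varphi\coloneqq v+z$, so that $a(\varphi,\bullet)=DN(\ut;v)$. Split $\varphi$ by Pythagoras into $c\coloneqq\trbpw{I\varphi}$ and $d\coloneqq\trbpw{(1-I)\varphi}$, so that $|(c,d)|=\trb{DN(\ut;v)}_*$. Take $w_\nc$ from \eqref{eqn:bht_def} with $\trbpw{w_\nc}\le1$, and let $z_\nc$ be the approximation \eqref{eqn:zM_approx_def} of $z$. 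Using $a_\pw((1-I)z,w_\nc)=0$, one has the exact identity
\[
DN_h(\ut;Iv,w_\nc)=a_\pw(I\varphi,w_\nc)+a_\pw(z_\nc-z,w_\nc)-DG_\pw\bigl(\ut;(1-I)v,Jw_\nc\bigr).
\]
With \eqref{eqn:kappa_Ts} and \eqref{eqn:Constb_1}--\eqref{eqn:Constb_3} this gives $\bht x\le c+\kappa(\T,s)\Constb{1}x+\Constb{\T}y$. Two points about this bound:
\begin{itemize}
\item the $\kappa(\T,s)\Constb{2}$ part of $\Constb{\T}$ comes from $\trbpw{z-z_\nc}$, not from the cross term;
\item $J$ enters only through $\trb{Jw_\nc}\le\|J\|$ in the \eqref{eqn:Constb_3} term.
\end{itemize}

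Your second row is correct once $\trb{r}_*$ is replaced by the sharper $d$, giving $y\le d+\kappa(\T,s)(\Constb{1}x+\Constb{2}y)$. Insert this into the $\Constb{\T}y$ term of the first row. Componentwise this yields $\bht(x,y)^\top\le\kappa(\T,s)M(x,y)^\top+N(c,d)^\top$. Taking Euclidean norms gives $\bht\trb{v}\le\kappa(\T,s)\|M\|_2\trb{v}+\|N\|_2\trb{DN(\ut;v)}_*$, which is (a).
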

A remark on the asymptotics of the postprocessing in~\cref{thm:continuous_inf_sup} 
precedes its proof.%
\begin{remark}[asymptotics of $\btT$]\label{rem:condition_cont_inf_sup}
	The condition in~\cref{thm:continuous_inf_sup}(a) requires a positive
	discrete inf--sup constant $0<\bht$.
	If this holds,~\cref{thm:discretisation_error} 
	guarantees $\kappa(\T,s)\|M\|_2<\bht$
	for sufficiently small mesh-sizes
	from $\kappa(\T,s)\lesssim h_{\max}^\sigma$ for
	$\sigma\coloneqq\min\{1,m-s,\sigma_{\textrm{reg}}\}$.
	Provided that $\bht$ and $\|M\|_2,\|N\|_2$ admit uniform bounds, 
	this implies the asymptotic equivalence $\btT \sim \bht/\|N\|_2$ 
	(in the sense that their ratio converges to $1$ as $h_{\max}\to 0$).
	Since $\|J\|\approx1$, 
	the convergence $\Constb{3}\to 0$ is necessary and sufficient for $\Constb{\T}\to0$,
	and in turn implies $\|N\|_2\to \max\{1,\bht\}$ and $\btT\sim\min\{1,\bht\}$.
\end{remark}

\begin{proof}
	The general idea of the proof is the separate analysis in the orthogonal spaces ${V_\nc}$ and $(1-I)V$ with
	similarities to the conforming case, e.g.,
	in~\cite{NHW:NumericalMethodVerify2005,TLO:VerifiedComputationsSemilinear2013,LNO:ComputerassistedProofStationary2022}.
	However, the present analysis in 5 steps additionally depends on the smoother $J$ and the approximation error
	control $\kappa(\T,s)$ for nonconforming functions in ${V_\nc}\not\subset V$.

	\medskip\noindent\emph{Step 1} prepares the proof of (a).
	The lower bound $\btT$ on the continuous inf--sup constant $\widehat{\beta}$ at $\ut\in V$ 
		satisfies
		\begin{align}\label{eqn:bht_inf_sup_character}
			\btT\leq\widehat{\beta}\coloneqq\inf_{0\ne v\in V}\sup_{0\ne w\in
		V}\frac{DN(\ut;v,w)}{\trb{v}\trb{w}}
		\quad\Leftrightarrow\quad\btT\trb{v}\leq\trb{DN(\ut;v)}_{*}\qquad\text{for all }v\in V.
	\end{align}
	Let $v\in V$ be arbitrary and consider the Riesz representation $z\in V$ of $DG(\ut;v)$, i.e.,
	\begin{align}\label{eqn:inf_sup_z_def}
		a(z, w) = D\GGamma(\ut;v,w)\qquad\text{for all }w\in V.
	\end{align}
	This and $DN(\ut;v) = Av + D\GGamma(\ut;v)\in V^*$ imply that 
	$\varphi\coloneqq v+z\in V$ solves
	\begin{align}\label{eqn:inf_sup_vf_def}
		a(\varphi, w) = DN(\ut;v,w) \qquad\text{for all }w\in V.
	\end{align}
	The orthogonality~\eqref{eqn:I1} of $I$ leads to the decomposition of $v$ and $\varphi$ as
	\begin{align}\label{eqn:v_vf_split}
		\trb{v}^2=\trbpw{I
		v}^2+\trbpw{(1-I)v}^2\quad\text{and}\quad\trb{\varphi}^2=\trbpw{I\varphi}^2+\trbpw{(1-I)\varphi}^2.
	\end{align}

	\noindent\emph{Step 2} controls $\trbpw{I v}$.
	Given any $v\in V$,~\eqref{eqn:bht_def} provides $w_\nc\in {V_\nc}$ with $\trbpw{w_\nc}\leq1$ and
	\begin{align*}
		\bht\trb{I v}_\pw&=DN_h(\ut;I v,w_\nc) = a_{\pw}(I v, w_\nc) + D\GGamma_{\pw}(\ut;I v,Jw_\nc),
	\end{align*}
	using the definition of $DN_h(\ut)$ in the last step.
	Let $z_\nc\in{V_\nc}$ solve
	\begin{align*}
		a_\pw(z_\nc, w_\nc) = D\GGamma(\ut; v, Jw_\nc) \qquad\text{for all }w_\nc\in {V_\nc},
	\end{align*}
	i.e., $z_\nc$ approximates the solution $z\in V$ to 
	$(-\Delta)^m z = D\GGamma(\ut;v)\equiv F\in H^{-s}(\Omega)$ by~\eqref{eqn:zM_approx_def}.
	This and $\varphi=v+z$ from step 1 combined with the inf--sup condition above reveal
	\begin{align*}
		\bht\trb{I v}_\pw
				   &=a_\pw(I \varphi, w_\nc)+a_{\pw}(z_\nc-I z, w_\nc
				   )-D\GGamma_{\pw}(\ut;(1-I)v, Jw_\nc).
	\end{align*}
	Observe $a_\pw((1-I) z, w_\nc)=0$ by~\eqref{eqn:I1} so that 
	Cauchy inequalities and the
	definition of $\Constb{3}$ result with $\trb{Jw_\nc}\leq \|J\|$ from $\trbpw{w_\nc}\leq 1$ in
	\begin{align}
		\bht\trb{I v}_\pw
				   &\leq \trbpw{I
				   \varphi}+\trbpw{z-z_\nc}+\Constb{3}\|J\|\trbpw{(1-I)v}\label{eqn:inf_sup_IM_v_bound_1}.
	\end{align}
Since $z\in V$ from \eqref{eqn:inf_sup_z_def} is the Riesz representation of
$D\GGamma(\ut;v)\in H^{-s}(\Omega)$ and $z_\nc$ its approximation in the sense of~\eqref{eqn:zM_approx_def}, the approximation error control
\eqref{eqn:kappa_Ts} verifies
\begin{align*}
	\kappa(\T, s)^{-1}\trbpw{z-z_\nc}
		&\leq \|D\GGamma(\ut;v)\|_{s,*}
	\leq\|D\GGamma_\pw(\ut;I
	v)\|_{s,*}+\|D\GGamma_\pw(\ut;(1-I)v)\|_{s,*}
\end{align*}
with a triangle inequality in the last step.
This and
the definition of $\Constb{1}$ and $\Constb{2}$ provide
\begin{align}\label{eqn:inf_sup_z_approx_bound}
	\kappa(\T, s)^{-1}\trbpw{z-z_\nc}\leq\Constb{1}\trbpw{I v}+\Constb{2}\trbpw{(1-I)v}.
\end{align}
The combination of~\eqref{eqn:inf_sup_IM_v_bound_1} with \eqref{eqn:inf_sup_z_approx_bound} results with $\Constb{\T}=\kappa(\T,
s)\Constb{2}+\Constb{3}\|J\|$ in
\begin{align}
	\bht\trb{I v}_\pw
			   \leq \trbpw{I
			   \varphi}&+\kappa(\T, s)\Constb{1}\trbpw{I v}
			   +\Constb{\T}\trbpw{(1-I)v}.\label{eqn:inf_sup_IM_v_bound_2}
\end{align}

\noindent\emph{Step 3} controls $\trbpw{(1-I)v}$.
The Galerkin property~\eqref{eqn:I1} implies $\trbpw{(1-I)z}\leq \trbpw{z-z_\nc}$.
	Hence the split $v=\varphi-z$ from \eqref{eqn:inf_sup_z_def}--\eqref{eqn:inf_sup_vf_def} and
	\eqref{eqn:inf_sup_z_approx_bound} reveal
\begin{align}\nonumber
	\trbpw{(1-I)v}
		&\leq \trbpw{(1-I)\varphi}+\trbpw{(1-I)z}\\
		&\leq \trbpw{(1-I)\varphi}+\kappa(\T,s) \left(\Constb{1}\trbpw{I
		v}+\Constb{2}\trbpw{(1-I)v}\right).\label{eqn:inf_sup_1IM_v_bound}
\end{align}
Recall 
$M=(M_{j,k})_{j,k=1,2}, N=(N_{j,k})_{j,k=1,2}\in\mathbb R^{2\times 2}$ from~\eqref{eqn:MN_def} and
abbreviate the norms $a\coloneqq\trbpw{I v}, b\coloneqq \trbpw{(1-I)v}, c\coloneqq\trbpw{I \varphi}$, and
$d\coloneqq\trbpw{(1-I)\varphi}$ to write~\eqref{eqn:inf_sup_1IM_v_bound}~as
\begin{align*}
	\bht\trbpw{(1-I)v}\leq N_{2,2}d + \kappa(\T, s)\left(M_{2,1}a + M_{2,2}b\right).
\end{align*}
Similarly, the control of $\trbpw{(1-I)v}$ in~\eqref{eqn:inf_sup_IM_v_bound_2} by \eqref{eqn:inf_sup_1IM_v_bound}
results in
\begin{align*}
	\bht\trb{I v}_\pw
			   &\leq N_{1,1}c + N_{1,2}d + \kappa(\T, s) \left(M_{1,1}a + M_{1,2}b\right).
\end{align*}

\noindent\emph{Step 4} concludes (a).
With $a,b,c$, and $d$ from step 3, the decomposition~\eqref{eqn:v_vf_split} reads
\begin{align*}
	\trb{v}=\left|\begin{pmatrix} a\\b
	\end{pmatrix}\right|,\qquad
	\trb{\varphi}=\left|\begin{pmatrix} c\\d
	\end{pmatrix}\right|.
\end{align*}
Step 3 and the monotonicity of the Euclidean norm reveal \begin{align}\label{eqn:abcd_estimate}
	\bht\trb{v} =\bht\left|\begin{pmatrix} a\\b
	\end{pmatrix}\right|
\leq \left|\kappa(\T,s)M
	\begin{pmatrix}a\\b
	\end{pmatrix}+N
	\begin{pmatrix}c\\d
	\end{pmatrix}\right|
	\leq \kappa(\T,s)\|M\|_2\trb{v}+\|N\|_2\trb{\varphi}
\end{align}
with spectral norm $\|\bullet\|_2$.
Provided $\kappa(\T,s)\|M\|_2<\bht$,
the absorption of $\kappa(\T,s)\|M\|_2\trb{v}$ in the left-hand side of \eqref{eqn:abcd_estimate} and the Riesz isometry
$\trb{\varphi}=\trb{DN(\ut;v)}_*$ result in
\begin{align*}
	\trb{v}\leq \|N\|_2/(\bht-\kappa(\T,s)\|M\|_2)\trb{DN(\ut;v)}_*.
\end{align*}
Since $v\in V$ was arbitrary, $\btT$ is a lower bound of $\widehat{\beta}$ from~\eqref{eqn:bht_inf_sup_character}.
This concludes (a).

\medskip\noindent\emph{Step 5} finalises the proof.
	It remains to show the bijectivity of $DN(\ut)$.
	If $0\leq s<m$, 
	then the Sobolev embedding $H^{-s}(\Omega)\hookrightarrow V^*$ is compact 
	and $D\GGamma(\ut)\in L(V;V^*)$ a compact linear map.
	Since $A$ is an isomorphism, 
	$DN(\ut) = A+ D\GGamma(\ut)$ is a Fredholm operator of index 0.
	Provided that $\kappa(\T, s)\|M\|_{2}<\bht$ holds, 
	step 4 verifies the injectivity (and hence bijectivity) of $DN(\ut)$.
	The relation $\|DN(\ut)^{-1}\|_{L(V^*;V)}=\widehat{\beta}^{-1}$ follows 
	from~\cref{lem:cont_inf_sup} and the lower bound $\btT$
	of $\widehat{\beta}$ from (a) concludes the proof.
\end{proof}

\subsection{Algebraic realisation}%
\label{sub:computation_kappaM_Ts}
The discrete residual $\operatorname{Res}_h$, the discrete inf--sup constant $\bht$, and
$\kappa_\nc(\T,s)$ are computable from finite-dimensional problems:
A linear solve computes the discrete residual $\operatorname{Res}_h$, 
while the discrete inf--sup constant 
$\bht$ and $\kappa_\nc(\T,s)$ for $0\leq s\leq m$ 
relate to discrete eigenvalue problems (EVP).

Let $\{\psi_j\}_{j=1}^N$ denote a basis of dimensions $\dim \Vnc = N$ for $\Vnc$ and
define the corresponding stiffness matrices $A_\nc\in\R^{N\times N}$ and $A_J\in\R^{N\times N}$ by
\begin{align}\label{eqn:AM_AJ_def}
	(A_\nc)_{j,k}&\coloneqq a_{\pw}(\psi_j, \psi_k),\qquad
	(A_J )_{j,k}\coloneqq a(J\psi_j, J\psi_k)\qquad\text{for all }j,k=1,\dots,N.
\end{align}
Recall $N_h$ and the discrete residual $\operatorname{Res}_h$ for $v_\nc\in\Vnc$ 
from~\cref{sub:Morley_reference_discretisation}.
\begin{lemma}[computation of $\operatorname{Res}_h$]\label{lem:computation_Resh}
Given $v_\nc\in\Vnc$ and $b\coloneqq (N_h(v_\nc;\psi_j))_{j=1}^{N}\in\R^N$, then $\operatorname{Res}_h=\sqrt{b^\top
	A_{\nc}^{-1}b}$.
\end{lemma}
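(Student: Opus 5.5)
The plan is to recognise $\operatorname{Res}_h$ as the dual norm of a linear functional on the finite-dimensional Hilbert space $(\Vnc,a_\pw)$ and then evaluate it through its Riesz representative. By \eqref{eqn:DSWP}, the map $\ell\coloneqq N_h(v_\nc;\bullet)$ is linear in its second argument, because $a_\pw(v_\nc,\bullet)$, $G_\pw(v_\nc;J\bullet)$ and $F(J\bullet)$ are all linear in the test function. Since $a_\pw$ is a scalar product on $\Vnc$, the matrix $A_\nc$ from \eqref{eqn:AM_AJ_def} is the Gram matrix of the basis $\{\psi_j\}_{j=1}^N$. It is therefore symmetric positive definite and in particular invertible.

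First, I let $r\in\Vnc$ denote the Riesz representation of $\ell$, that is, $a_\pw(r,\varphi_\nc)=\ell(\varphi_\nc)$ for all $\varphi_\nc\in\Vnc$. A Cauchy inequality gives $\ell(\varphi_\nc)\leq\trbpw{r}\trbpw{\varphi_\nc}$, and equality holds for $\varphi_\nc=r$ (the case $r=0$ being trivial). Hence $\operatorname{Res}_h=\trbpw{r}$ and $\trbpw{r}^2=\ell(r)$.

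Second, I expand $r=\sum_{k=1}^N x_k\psi_k$ with coefficient vector $x\in\R^N$. Testing the defining relation of $r$ with $\psi_j$ yields $\sum_k x_k a_\pw(\psi_k,\psi_j)=b_j$, which by the symmetry of $a_\pw$ is the linear system $A_\nc x=b$, so $x=A_\nc^{-1}b$.

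Third, I compute
\[
\operatorname{Res}_h^2=\trbpw{r}^2=x^\top A_\nc x=b^\top A_\nc^{-1}A_\nc A_\nc^{-1}b=b^\top A_\nc^{-1}b.
\]
Taking square roots concludes the proof. No step is a genuine obstacle; the only point to check is the linearity of $N_h(v_\nc;\bullet)$ in the test function, together with the symmetry and positive definiteness of $A_\nc$.
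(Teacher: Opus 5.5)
Your proposal is correct and follows the paper's proof. Both identify $\operatorname{Res}_h$ with the norm of the Riesz representative of $N_h(v_\nc;\bullet)$ in $(\Vnc,a_\pw)$, obtain its coefficient vector $x=A_\nc^{-1}b$, and conclude from $x^\top A_\nc x=b^\top A_\nc^{-1}b$ using the symmetry of $A_\nc$; your explicit checks of linearity in the test function and of the attained supremum (which the paper leaves implicit) are fine.
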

\begin{proof}
	Given $v_\nc\in\Vnc$, the discrete residual $\operatorname{Res}_h$ is the operator norm of 
	$N_h(v_\nc)\in\Vnc^*$ with Riesz representation $z_\nc\in\Vnc$ given by
	\begin{align*}
		a_\pw(z_\nc, \varphi_\nc)=N_h(v_\nc;\varphi_\nc)\qquad\text{for all }\varphi_\nc\in\Vnc.
	\end{align*}
	This and the definition of $A_\nc$ and $b$ verify that $x\equiv(x_j)_{j=1}^N=A_\nc^{-1}b\in\R^N$ is the coefficient vector of
	$z_\nc=\sum^{N}_{j=1} x_j\psi_j$ with $\trbpw{z_\nc}^2=x^\top A_\nc x$. The Riesz isomorphism provides
	$\operatorname{Res}_h^2=\trbpw{z_\nc}^2=b^\top A_\nc^{-\top} b$ and the symmetry of $A_\nc$ concludes the proof.
\end{proof}
The right-inverse $J$ is injective and $\left\langle \bullet,\bullet\right\rangle_s$ from
\cref{sec:Explicitly computable constants} defines the 
symmetric positive-definite
Gram matrix $B_J\in\R^{N\times N}$ for the
scalar product $\left\langle J\bullet,
J\bullet\right\rangle_s$ on $\Vnc$ by 
\begin{align}\label{eqn:BJ_def}
(B_J )_{j,k}\coloneqq \left\langle J\psi_j,J\psi_k\right\rangle_s\qquad\text{for all }j,k=1,\dots,N.
\end{align}
Since $A_\nc, A_J$ from~\eqref{eqn:AM_AJ_def} and $B_J$ are symmetric matrices, so is $S\coloneqq B_J(A_\nc^{-1}-A_J^{-1})B_J$.
\begin{proposition}[computation of $\kappa_\nc(\T, s)$]\label{lem:computation_kappaM_Ts}
	The constant $\kappa_\nc(\T, s)=\sqrt{\lambda_{\max}}$ equals the square root of the maximal eigenvalue of the finite
	dimensional,
	symmetric EVP:
	\begin{flalign*}
		\quad\text{seek }(\lambda, x)\in \R\times \R^N\text{such that}\qquad S x = \lambda B_J x.&&
	\end{flalign*}
\end{proposition}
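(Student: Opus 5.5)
The plan is to rewrite the max--min quotient in \eqref{eqn:Cdual_def} as a generalised Rayleigh quotient in the coefficient space $\R^N$. The maximum of such a quotient is the largest eigenvalue of $S x=\lambda B_J x$. Throughout, identify $g_\nc=\sum_j y_j\psi_j$ with $y\in\R^N$. Then $\|Jg_\nc\|_s^2=y^\top B_J y$ by \eqref{eqn:BJ_def}, and $B_J$ is symmetric positive definite because $J$ is injective. The same injectivity makes $A_J$ from \eqref{eqn:AM_AJ_def} symmetric positive definite, so $A_J^{-1}$ and hence $S$ are well defined.

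\emph{Step 1 (discrete solution operator).} By \eqref{eqn:Ahs_def}, $z_\nc\coloneqq A_{h,s}^{-1}g_\nc$ has the coefficient vector $x=A_\nc^{-1}B_J y$. Consequently $\trbpw{z_\nc}^2=x^\top A_\nc x=y^\top B_J A_\nc^{-1}B_J y$.

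\emph{Step 2 (inner minimisation).} The minimum over $v_\nc\in\Vnc$ of $\trbpw{z_\nc-Jv_\nc}$ is attained at the $a_\pw$-orthogonal projection of $z_\nc$ onto $J\Vnc\subset V$. This projection is $Jw_\nc$, characterised by
\begin{align*}
a(Jw_\nc,Jv_\nc)=a_\pw(z_\nc,Jv_\nc)\qquad\text{for all }v_\nc\in\Vnc.
\end{align*}
The key observation is that the right-hand side is computable without $J$. The Galerkin property \eqref{eqn:I1} gives $a_\pw(z_\nc,(1-I)Jv_\nc)=0$, and the right-inverse property \eqref{eqn:JI} gives $IJv_\nc=v_\nc$. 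Together they yield
\begin{align*}
a_\pw(z_\nc,Jv_\nc)=a_\pw(z_\nc,v_\nc)=\langle Jg_\nc,Jv_\nc\rangle_s.
\end{align*}
Hence the coefficient vector $w$ of $w_\nc$ solves $A_J w=A_\nc x=B_J y$, that is, $w=A_J^{-1}B_J y$. Pythagoras for the orthogonal projection then gives
\begin{align*}
\min_{v_\nc\in\Vnc}\trbpw{z_\nc-Jv_\nc}^2=\trbpw{z_\nc}^2-\trb{Jw_\nc}^2
= y^\top B_J A_\nc^{-1}B_J y-y^\top B_JA_J^{-1}B_J y=y^\top S y.
\end{align*}

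\emph{Step 3 (conclusion).} Combining Steps 1 and 2 gives
\begin{align*}
\kappa_\nc(\T,s)^2=\max_{0\ne y\in\R^N}\frac{y^\top S y}{y^\top B_J y}.
\end{align*}
Since $S$ is symmetric and $B_J$ is symmetric positive definite, the Courant--Fischer characterisation identifies this maximum with the largest eigenvalue $\lambda_{\max}$ of $Sx=\lambda B_Jx$. The quotient is nonnegative, so $\lambda_{\max}\ge0$ and taking the square root is legitimate.

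The only nonroutine point is Step 2: one must recognise that $a_\pw(z_\nc,J\bullet)=a_\pw(z_\nc,\bullet)$ on $\Vnc$. This identity is what turns the projection onto $J\Vnc$ into the matrix $A_J^{-1}$ and produces the difference $A_\nc^{-1}-A_J^{-1}$ in $S$.
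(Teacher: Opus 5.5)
Your proof is correct and follows the paper's argument. The paper likewise projects $A_{h,s}^{-1}g_\nc$ onto $J\Vnc$ via $a(Jz_\nc,J\varphi_\nc)=a_\pw(A_{h,s}^{-1}g_\nc,J\varphi_\nc)$ and applies Pythagoras to get $\trbpw{A_{h,s}^{-1}g_\nc}^2-\trb{Jz_\nc}^2$; it then passes to the coefficient vectors $A_\nc^{-1}B_Jx$ and $A_J^{-1}B_Jx$ and concludes by Rayleigh--Ritz. Your explicit derivation of $a_\pw(z_\nc,Jv_\nc)=a_\pw(z_\nc,v_\nc)$ from \eqref{eqn:I1} and \eqref{eqn:JI} fills in a step that the paper uses only tacitly when it cites \eqref{eqn:Ahs_def} for this identity.
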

\begin{proof}
	Since $J$ is a right-inverse of $I$, $J$ is injective and
	$a(J\bullet, J\bullet)$ defines a scalar product on $\Vnc$.
	Let $g_\nc\in\Vnc$ be arbitrary and let 
	$z_\nc\in\Vnc$ solve 
	\begin{align}\label{eqn:J_min_kappa}
		a(Jz_\nc, J\varphi_\nc) = a_{\pw}(A_{h,s}^{-1}g_\nc, J\varphi_\nc)\qquad\text{for all }\varphi_\nc\in\Vnc.
	\end{align}
	Then $A_{h,s}^{-1}g_\nc -Jz_\nc\perp J\Vnc$ verifies the 
	Pythagoras identity
	\begin{align*}
		\trbpw{A_{h,s}^{-1}g_\nc-Jv_\nc}^2=\trbpw{A_{h,s}^{-1}g_\nc-Jz_\nc}^2+\trb{J(z_\nc-v_\nc)}^2\qquad\text{for all }v_\nc\in\Vnc.
	\end{align*}
	In particular, the minimum is obtained for 
	$v_\nc=z_\nc$ while the choice $v_\nc=0$ justifies
	\begin{align}\label{eqn:z_m_rep}
		\min_{v_\nc\in\Vnc}\trbpw{A_{h,s}^{-1}g_\nc-Jv_\nc}^2
		=\trbpw{A_{h,s}^{-1}g_\nc-Jz_\nc}^2 = \trbpw{A_{h,s}^{-1}g_\nc}^2-\trb{Jz_\nc}^2.
	\end{align}
	Denote the coefficient vector of $g_\nc$ by 
	$x=(x_j)_{j=1}^N\in \R^N$, i.e.,
	$g_\nc=\sum^{N}_{j=1} x_j\psi_j$ and recall the definition of the symmetric matrices 
	$A_\nc, A_J$, and $B_J$ from~\eqref{eqn:AM_AJ_def}--\eqref{eqn:BJ_def}.
	Inserting this representation of $g_\nc$ in \eqref{eqn:Ahs_def} reveals the coefficient vector $A_\nc^{-1}B_J x$
	of $A_{h,s}^{-1}g_\nc\in\R^{N}$.
	Since the right-hand side of \eqref{eqn:J_min_kappa} equals 
	$\left\langle Jg_\nc, J\varphi_\nc\right\rangle_s$ by \eqref{eqn:Ahs_def}, the
	coefficient vector of $z_\nc$ reads $A_J^{-1}B_J x\in\mathbb R^N$.
	The combination of~\eqref{eqn:Ahs_def} and \eqref{eqn:AM_AJ_def}--\eqref{eqn:J_min_kappa}
	results~in
	\begin{align*}
		\trbpw{A_{h,s}^{-1}g_\nc}^2&=a_\pw(A_{h,s}^{-1}g_\nc, A_{h,s}^{-1}g_\nc) 
			= x^\top B_JA_\nc^{-1} B_J x,\\
		\trb{Jz_\nc}^2&=a(Jz_\nc, Jz_\nc)  
			= x^\top B_JA_J^{-1} B_J x,\\
		\|Jg_\nc\|_s^2&=\left\langle Jg_\nc,Jg_\nc\right\rangle_s
			= x^\top B_J x.
	\end{align*}
	Since $g_\nc\in\Vnc$, identified with the coefficient space $\R^N$, was arbitrary,
	this, the definition of $\kappa_\nc(\T,s)$ in
	\eqref{eqn:Cdual_def}, and \eqref{eqn:z_m_rep} verify for $S\coloneqq
	B_J(A_\nc^{-1}-A_J^{-1})B_J$ that
	\begin{align*}
		\kappa_\nc(\T, s)^2 = \max_{0\ne x\in \R^N}\frac{x^\top Sx}{x^\top B_Jx}.
	\end{align*}
	Hence the Rayleigh-Ritz principle implies that $\kappa_\nc(\T,s)^2$ is the largest eigenvalue of the symmetric
	eigenvalue problem $Sx =\lambda B_Jx$. This concludes the proof.
\end{proof}
Given $\ut\in V$, the discrete inf--sup constant $\bht$ from \eqref{eqn:bht_def} relates to the bilinear form
$DN_h(\ut)\in L(\Vnc;\Vnc^*)$ represented by $\widehat{D}\in\R^{N\times N}$ with entries
\begin{align*}
	\widehat{D}_{j,k}\coloneqq DN_h(\ut;\psi_j,\psi_k)\qquad\text{for all }j,k=1,\dots,N.%
\end{align*}%
\begin{proposition}[computation of $\bht$]\label{lem:computation_bht}
	The constant $\bht=\sqrt{\lambda_{\min}}$ equals the square root of the minimal eigenvalue of the finite
	dimensional,
	 symmetric EVP:
	\begin{flalign*}
		\quad\text{seek }(\lambda, x)\in \R\times \R^N\text{such that}\qquad \widehat{D} A_\nc^{-1}\widehat{D}^\top  x =
		\lambda A_\nc x.&&
	\end{flalign*}
\end{proposition}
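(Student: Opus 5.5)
The plan mirrors the proof of \cref{lem:computation_Resh}: represent the inner supremum in \eqref{eqn:bht_def} through a discrete Riesz representation, then read off the outer infimum as a generalised Rayleigh quotient. Since $a_\pw$ is a scalar product on $\Vnc$ with Gram matrix $A_\nc$ from \eqref{eqn:AM_AJ_def}, I identify every $w_\nc,\varphi_\nc\in\Vnc$ with its coefficient vector, $w_\nc=\sum_j x_j\psi_j$ and $\varphi_\nc=\sum_k y_k\psi_k$. Then $\trbpw{w_\nc}^2=x^\top A_\nc x$, and by bilinearity $DN_h(\ut;w_\nc,\varphi_\nc)=x^\top\widehat D y$.

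First, for fixed $w_\nc$, I evaluate $\sup_{\varphi_\nc}DN_h(\ut;w_\nc,\varphi_\nc)/\trbpw{\varphi_\nc}$. This is the operator norm of the functional $DN_h(\ut;w_\nc,\bullet)\in\Vnc^*$, whose coefficient vector is $b\coloneqq\widehat D^\top x$. As in \cref{lem:computation_Resh}, its Riesz representation has coefficient vector $A_\nc^{-1}b$, so the squared norm is
\[
b^\top A_\nc^{-1}b=x^\top\widehat D A_\nc^{-1}\widehat D^\top x .
\]
Second, I square the infimum. Because both numerator and denominator are nonnegative, this gives
\[
\bht^2=\min_{0\ne x\in\R^N}\frac{x^\top \widehat D A_\nc^{-1}\widehat D^\top x}{x^\top A_\nc x}.
\]
The numerator matrix $\widehat D A_\nc^{-1}\widehat D^\top$ is symmetric positive semidefinite, because $A_\nc^{-1}$ is symmetric positive definite. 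The matrix $A_\nc$ is symmetric positive definite. So the Rayleigh--Ritz principle for the symmetric-definite pencil identifies the minimum with the smallest eigenvalue $\lambda_{\min}\geq0$ of $\widehat D A_\nc^{-1}\widehat D^\top x=\lambda A_\nc x$. Taking square roots then gives $\bht=\sqrt{\lambda_{\min}}$.

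No step is genuinely hard. The one place that needs care is the transpose convention: the first slot of $DN_h(\ut;\bullet,\bullet)$ is the trial function $w_\nc$, which carries the infimum, and the second slot is the test function $\varphi_\nc$, which carries the supremum. With $\widehat D_{j,k}=DN_h(\ut;\psi_j,\psi_k)$, the functional acting on $\varphi_\nc$ therefore has coefficient vector $\widehat D^\top x$ and not $\widehat D x$. Getting this right is exactly what places $A_\nc^{-1}$ between $\widehat D$ and $\widehat D^\top$, in that order, in the stated eigenvalue problem.
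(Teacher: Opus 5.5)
Your proposal is correct and follows the paper's own proof essentially step for step. In both, the Riesz representation of $DN_h(\ut;w_\nc,\bullet)$ has coefficient vector $A_\nc^{-1}\widehat{D}^\top x$, which yields the generalised Rayleigh quotient $x^\top\widehat{D}A_\nc^{-1}\widehat{D}^\top x/x^\top A_\nc x$, and the Rayleigh--Ritz principle for the symmetric-definite pencil then identifies $\bht^2$ with $\lambda_{\min}$; your transpose bookkeeping matches the paper's.
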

\begin{proof}
	Let $w_\nc\in\Vnc$ be arbitrary. 
	The unique Riesz representation $R_\nc(w_\nc)$
	of $DN_h(\ut;w_\nc)\in \Vnc^*$ in the Hilbert space $(\Vnc, a_\pw)$ satisfies
	\begin{align}\label{eqn:zM(vM)_def}
		a_\pw(R_\nc(w_\nc), \varphi_\nc) = DN_h(\ut;w_\nc, \varphi_\nc)\qquad \text{for all }\varphi_\nc\in\Vnc.
	\end{align}
	Since $\Vnc$ is finite-dimensional, the supremum and infimum in the definition of $\bht$ from \eqref{eqn:bht_def}
	are attained and the Riesz isometry verifies
	\begin{align} \label{eqn:bht_min_eig_def}
		\bht=\min_{0\ne w_\nc\in\Vnc}\frac{\trbpw{R_\nc(w_\nc)}}{\trbpw{w_\nc}}.
	\end{align}
	Let $x=(x_j)_{j=1}^N\in \R^N$ denote the coefficient vector of $w_\nc$ with $\trbpw{w_\nc}^2=x^\top A_\nc x$
	from~\eqref{eqn:AM_AJ_def}.
	The coefficient vector of $R_\nc(w_\nc)$ equals $A_\nc^{-1}\widehat{D}^\top  x\in\R^N$
	by~\eqref{eqn:zM(vM)_def} and satisfies 
	$\trbpw{R_\nc(w_\nc)}^2 = a_\pw(R_\nc(w_\nc),R_\nc(w_\nc)) 
	= x^\top \widehat{D} A_\nc^{-1} \widehat{D}^\top  x$.
	This and \eqref{eqn:bht_min_eig_def} show
	\begin{align*}
		\bht^2=\min_{0\ne x\in\R^N}\frac{x^\top \widehat{D} A_\nc^{-1} \widehat{D}^\top  x}{x^\top A_\nc x}.
	\end{align*}
	Hence the Rayleigh-Ritz principle implies that $\bht^2$ is the smallest eigenvalue of the symmetric
	eigenvalue problem $\widehat{D} A_\nc^{-1} \widehat{D}^\top x =\lambda A_\nc x$. This concludes the proof.
\end{proof}

\section{Existence verification for stationary Navier--Stokes}%
\label{sec:Algebraic_realisation_and_application}
This section applies the solution existence verification framework from
\cref{sec:Existence verification for fourth-order problems,sec:The_continuous_inf--sup_constant}
to the Morley discretisation of the Navier--Stokes stream-function formulation~\eqref{eqn:intro_NVS_SP}.

\subsection{Stream-function formulation of Navier--Stokes}%
\label{sub:Stream-function vorticity formulation of Navier--Stokes}
The stream-function formulation~\eqref{eqn:intro_NVS_SP} of the Navier--Stokes equations
is a fourth-order semilinear problem ($m=2$).
Its weak formulation~\eqref{eqn:SWP} concerns the Hilbert space $V=H^2_0(\Omega)$
and the quadratic nonlinearity $G\in C^1(V;V^*)$
that is expressed as $G(\bullet) =\Gamma(\bullet,\bullet)$ through
the bilinear map $\Gamma:V\times V\to V^*$ with
\begin{align}\label{eqn:NVS_G_def}
	\Gamma(v,w,\varphi)\coloneqq \int_\Omega\Delta v \curl w\cdot \nabla \varphi\d x\qquad\text{for all }v,w,\varphi\in
	V.
\end{align}
Local uniqueness for small loads is well known, 
but the author is not aware of previous genericity results for
regular roots and general sources $F\in V^*$.
The latter follows below from~\cref{thm:reg_roots} based
on the Sard-Smale theorem, which has already been employed for related
characterisations of the solution set~\cite{FP:ComportementGlobalSolutions1967,ST:GenericPropertiesNonlinear1979}, see also \cite[chap.~10]{Tem:NavierStokesEquations1995}
on results for loads $F$ with higher Sobolev regularity.
\begin{theorem}[regular roots]\label{thm:NVS_reg_roots}
	For any $F\in V^*$, 
	there exists at least one root $u\in V$ of \eqref{eqn:SWP} and all of
	them satisfy the a priori bound $\trb{u}\leq\trb{F}_*$. 
	If $\|\Gamma\|\trb{F}_*<1$, 
	this root is unique and regular.
	There exists an open dense subset 
	$\mathcal{O}\subset V^*$ such that for all $F\in \mathcal{O}$, 
	the number of roots of~\eqref{eqn:SWP} is finite and all of them are regular.
\end{theorem}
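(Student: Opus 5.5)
The plan rests on the antisymmetry of the trilinear form $\Gamma$ in the last two arguments when they coincide. For $v\in V=H^2_0(\Omega)$, write $\Delta v=\div\nabla v$ and integrate by parts; the boundary terms vanish because $v\in H^2_0(\Omega)$. One checks the standard identity
\begin{align*}
\Gamma(v,v,v)=\int_\Omega \Delta v\,\curl v\cdot\nabla v\d x = 0,
\end{align*}
which already holds pointwise since $\curl v\cdot\nabla v=0$. Hence $G(v;v)=0$ for all $v\in V$, and therefore $a(v,v)+G(v;v)=\trb{v}^2$. For a root $u$, testing with $\varphi=u$ yields $\trb{u}^2=F(u)\leq\trb{F}_*\trb{u}$, which is the a priori bound.

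Existence would follow from a Leray--Schauder (or Galerkin plus Brouwer) argument. First I verify that $G$ is compact from $V$ to $V^*$. In 2D, $\Delta v\in L^2$ and $\curl v\in H^1_0\hookrightarrow L^p$ for all $p<\infty$ compactly, and likewise $\nabla\varphi\in L^q$. So $\Gamma(v,w,\cdot)$ is bounded by $\|\Delta v\|_{L^2}\|\curl w\|_{L^4}\|\nabla\varphi\|_{L^4}$. This is compact in $w$, but only weakly continuous in $v$. To get compactness of $G$, I would integrate by parts to move one derivative off $\Delta v$, or use the fact that the paper assumes compactness. Alternatively, a Galerkin scheme on finite-dimensional subspaces can be used: on such a subspace, $\langle N(v),v\rangle\geq\trb{v}^2-\trb{F}_*\trb{v}>0$ on a large sphere, so Brouwer's theorem (acute angle lemma) gives discrete roots. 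These are bounded by $\trb{F}_*$, and one passes to the limit using the compact Sobolev embeddings in the nonlinear term. Equivalently, Schaefer's fixed point theorem can be applied to $u=A^{-1}(F-G(u))$ with the a priori bound for $u=\lambda A^{-1}(F-G(u))$, which satisfies the same estimate since $\trb{u}^2=\lambda F(u)$.

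For uniqueness and regularity under $\|\Gamma\|\trb{F}_*<1$, I use that $DG(u;v)=\Gamma(u,v)+\Gamma(v,u)$. Then
\begin{align*}
DN(u;v,v)\geq\trb{v}^2-\|\Gamma\|\,\trb{u}\,\trb{v}^2\geq(1-\|\Gamma\|\trb{F}_*)\trb{v}^2,
\end{align*}
where the estimate only needs the bound on one term. More carefully, $\Gamma(u,v,v)=0$ by the pointwise identity, so only $\Gamma(v,u,v)$ remains and is bounded by $\|\Gamma\|\trb{u}\trb{v}^2$. Lax--Milgram then gives bijectivity, so every root is regular. For uniqueness of two roots $u_1,u_2$, I test the difference with $w=u_1-u_2$. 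Since $G(u_1)-G(u_2)=\Gamma(w,u_1)+\Gamma(u_2,w)$ and $\Gamma(u_2,w,w)=0$, this gives $\trb{w}^2\leq\|\Gamma\|\trb{u_1}\trb{w}^2$, and therefore $w=0$.

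The genericity statement is then \cref{thm:reg_roots}. Its hypotheses are the compactness of $G\in C^1(V;V^*)$, where $C^1$ is clear because $G$ is bounded quadratic, and weak coercivity. Weak coercivity follows from $\trb{v}^2=\langle T(v),v\rangle\leq\trb{T(v)}_*\trb{v}$, that is, $\trb{v}\leq\trb{T(v)}_*$, as noted in \cref{rem:weak_coercivity}. The main obstacle is the compactness of $G$, specifically the $\Delta v$ factor. I plan to handle it by integrating by parts twice to write $\Gamma(v,v,\varphi)$ as $\int(\partial_i v\,\partial_j v)\,\partial_{ij}\varphi$-type terms, or equivalently $\int\curl v\otimes\curl v:D^2\varphi$, which is valid for $H^2_0$ functions. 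In this form $G(v)$ depends on $v$ only through $\curl v\otimes\curl v\in L^2$. Since $\curl v\in H^1_0\hookrightarrow\hookrightarrow L^4$, this makes $G$ compact.
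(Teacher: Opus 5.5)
Your proof is correct, and its skeleton matches the paper's. The pointwise identity $\curl w\cdot\nabla w=0$ gives $\Gamma(v,w,w)=0$, which is the precise content of the paper's loosely stated antisymmetry ``$\Gamma(u,u)=0$''. From it follow $\trb{u}^2=F(u)$, the a priori bound, the weak coercivity $\trb{v}\le\trb{T(v)}_*$, and then genericity via \cref{thm:reg_roots}.

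You differ from the paper in two places.

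First, the paper only cites Girault--Raviart and Temam for existence and for uniqueness when $\|\Gamma\|\trb{F}_*<1$. You prove these yourself: Schaefer or Galerkin--Brouwer with the a priori bound; coercivity $DN(u;v,v)\ge(1-\|\Gamma\|\trb{F}_*)\trb{v}^2$ from $\Gamma(u,v,v)=0$ plus Lax--Milgram; and testing the difference of two roots. This also makes the regularity claim explicit.

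Second, and more substantially, you treat the compactness of $G$ differently. The paper gets around the obstacle you single out (only weak convergence of $\Delta v$) without integrating by parts, by letting the test function carry the compactness. The H\"older bound $\Gamma(v,w,\varphi)\lesssim\trb{v}\trb{w}\|\nabla\varphi\|_{L^4(\Omega)}$ shows that $\Gamma$ maps bounded sets of $V\times V$ into bounded sets of $(W^{1,4}_0(\Omega))^*$. This space embeds compactly into $V^*$ by Schauder's theorem, as the adjoint of the compact embedding $V\hookrightarrow W^{1,4}_0(\Omega)$. That argument is shorter and gives compactness of the whole bilinear map $\Gamma:V\times V\to V^*$, hence of $DG(v)$, in one stroke.

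Your route moves both derivatives onto $\varphi$ and instead yields a quantitative bound,
$\trb{G(v)-G(w)}_*\lesssim\bigl(\|\nabla v\|_{L^4(\Omega)}+\|\nabla w\|_{L^4(\Omega)}\bigr)\|\nabla(v-w)\|_{L^4(\Omega)}$.
This is exactly the complete continuity needed to pass to the limit in a Galerkin scheme.

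One correction: the identity is not $\int_\Omega\curl v\otimes\curl v:D^2\varphi\d x$ but
\begin{align*}
\Gamma(v,v,\varphi)=\int_\Omega\bigl((\partial_x v)^2-(\partial_y v)^2\bigr)\partial_{xy}\varphi+\partial_x v\,\partial_y v\,(\partial_{yy}\varphi-\partial_{xx}\varphi)\d x=-\int_\Omega\curl v\otimes\curl v:\nabla\curl\varphi\d x,
\end{align*}
so it involves the rotated Hessian of $\varphi$. Your compactness argument only uses the structure ``quadratic in $\nabla v$ times second derivatives of $\varphi$'', so the slip is harmless.
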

\begin{proof}
	The existence of solutions for all sources $F\in V^*$ with uniqueness provided $\|\Gamma\|\trb{F}_*<1$ is well
	known, see e.g.~\cite[IV.§2--3]{GR:FiniteElementMethods1986} and \cite{Tem:NavierStokesEquations1995}.
	In particular, the a priori bound follows immediately from $\trb{u}^2=F(u)$
	by the antisymmetry $\Gamma(u,u)=0$.

	\Cref{thm:reg_roots} provides the genericity of regular roots.
	Indeed, Hölder inequalities verify
	\begin{align*}
		\Gamma(v,w,\varphi)\leq\|\Delta v\|_{L^2(\Omega)}\|\nabla
		w\|_{L^4(\Omega)}\|\nabla\varphi\|_{L^4(\Omega)}\lesssim\trb{v}\trb{w}|\varphi|_{W^{1,4}(\Omega)}
		\qquad\text{for all }v,w,\varphi\in V
	\end{align*}
	with $|\curl\bullet|=|\nabla\bullet|$ and 
	the Sobolev embedding $V\hookrightarrow W^{1,4}_0(\Omega)$ in the last step.
	This and the density of $V\subset W^{1,4}_0(\Omega)$ implies that $\Gamma:V\times V\to
	\big(W^{1,4}_0(\Omega)\big)^*$ maps $V\times V$ continuously and boundedly into $\big(W^{1,4}_0(\Omega)\big)^*\subset V^*$.
	Since the latter inclusion is compact by Rellich and Schauder's theorem,
	$\Gamma:V\times V\to V^*$ is compact as a map into $V^*$.
	This compactness is also known from~\cite{WNN:CompactnessNonlinearOperator2017}, 
	therein proven in several pages.
	
	The semilinearity $G:V\to V^*$ inherits the compactness from 
	$G(v) = \Gamma(v,v)$ for all $v\in V$.
	Moreover, the a priori bound implies $\trb{v}\leq\trb{T(v)}_*$ for all $v\in V$
	and the weak coercivity of $T\coloneqq\Delta^2+G$ by~\cref{rem:weak_coercivity}.
	Hence,~\cref{thm:reg_roots} concludes the proof.
\end{proof}
The control~\ref{ass:N1} of the Lipschitz constant for $DG$ follows from explicit constants 
in the Sobolev embedding $H^2(\Omega)\hookrightarrow W^{1,4}(\Omega)$.
Numerical methods for this task 
are discussed in~\cite[sec.~7.6]{NPW:NumericalVerificationMethods2019} 
and may lead to sharper bounds.
\begin{lemma}[constant in Sobolev embedding]\label{lem:W14_sharp}
	Any $v\in V$ satisfies
	\begin{align*}%
		\|\nabla v\|_{L^4(\Omega)}\leq |\Omega|^{1/4}/\pi\trb{v}.
	\end{align*}
\end{lemma}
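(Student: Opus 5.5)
We must show $\|\nabla v\|_{L^4(\Omega)}\le |\Omega|^{1/4}\pi^{-1}\trb{v}$ for $v\in V=H^2_0(\Omega)$, where $\trb{v}=\|D^2v\|_{L^2(\Omega)}$. We plan to reduce everything to the two-dimensional Ladyzhenskaya inequality for $H^1_0$ functions, applied componentwise to $\nabla v$, and then use the Dirichlet--Poincaré inequality with a Faber--Krahn type bound on the first eigenvalue.

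\emph{Step 1 (Ladyzhenskaya).} Each component $w=\partial_j v$ lies in $H^1_0(\Omega)$. Extend $w$ by zero to $\R^2$. The elementary argument $w(x,y)^2\le \int_\R|\partial_x w^2|\,dx'$ (and the same in $y$), multiplied together and integrated, gives
\[
\|w\|_{L^4}^4\le \|w\|_{L^2}^2\|\nabla w\|_{L^2}^2 .
\]
A sharper variant uses $\|\partial_x w\|\,\|\partial_y w\|\le \tfrac12\|\nabla w\|^2$ to gain a factor $1/2$. For the vector field $g=\nabla v$ with $|g|^2=g_1^2+g_2^2$, we will rather apply the vector version directly: $|g|^2\le \int |\partial_x |g|^2|\,dx'$, and Cauchy--Schwarz yields
\[
\|g\|_{L^4}^4\le C\,\|g\|_{L^2}^2\|Dg\|_{L^2}^2 .
\]
Here $\|Dg\|_{L^2}=\trb{v}$, and the constant $C$ has to be tracked; we expect $C=1$ or $1/2$.

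\emph{Step 2 (Poincaré).} Since $\nabla v\in H^1_0(\Omega)^2$, we have $\|\nabla v\|_{L^2}^2\le \lambda_1(\Omega)^{-1}\trb{v}^2$, with $\lambda_1$ the first Dirichlet Laplacian eigenvalue. The target constant $|\Omega|^{1/4}/\pi$ requires $\|g\|_{L^4}^4\le |\Omega|\pi^{-4}\trb{v}^4$. Combining Steps 1 and 2 gives $\|g\|_{L^4}^4\le C\lambda_1^{-1}\trb{v}^4$, so we need $C/\lambda_1\le |\Omega|/\pi^4$.

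\emph{Step 3 (Faber--Krahn).} We use $\lambda_1(\Omega)\ge \pi j_{0,1}^2/|\Omega|$, where $j_{0,1}\approx 2.405$, so $\pi j_{0,1}^2\approx 18.17$. With $C=1/2$ this gives
\[
C/\lambda_1\le \frac{|\Omega|}{2\cdot 18.17}\approx 0.0275\,|\Omega|,
\]
while $\pi^{-4}\approx 0.0103$, which is not enough. Hence a sharper route is needed.

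\emph{Alternative route (Gagliardo--Nirenberg via Hölder).} We avoid the $L^2$ Poincaré step and use instead $\|g\|_{L^2}\le |\Omega|^{1/4}\|g\|_{L^4}$. Inserted into $\|g\|_{L^4}^4\le C\|g\|_{L^2}^2\|Dg\|^2$, this gives $\|g\|_{L^4}^2\le C|\Omega|^{1/2}\trb{v}^2$, i.e. $\|g\|_{L^4}\le C^{1/2}|\Omega|^{1/4}\trb{v}$. Matching the claim $|\Omega|^{1/4}\pi^{-1}\trb{v}$ requires $C\le\pi^{-2}$. This $C$ is the sharp Ladyzhenskaya/Gagliardo--Nirenberg constant in $\|w\|_{L^4}^4\le C\|w\|_{L^2}^2\|\nabla w\|_{L^2}^2$. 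The known optimal value is about $1/(\pi\cdot 1.86)\approx 0.171$, which exceeds $\pi^{-2}\approx 0.101$. So this route needs refinement as well.

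\emph{Probable intended argument.} The likely route is the Sobolev embedding $W^{1,1}\hookrightarrow L^2$ with sharp isoperimetric constant $(4\pi)^{-1/2}$, applied to $|g|^2$:
\[
\|g\|_{L^4}^2=\bigl\||g|^2\bigr\|_{L^2}\le (4\pi)^{-1/2}\bigl\|\nabla|g|^2\bigr\|_{L^1}\le (4\pi)^{-1/2}\,2\|g\|_{L^2}\trb{v}.
\]
Hölder, $\|g\|_{L^2}\le|\Omega|^{1/4}\|g\|_{L^4}$, then gives $\|g\|_{L^4}\le \pi^{-1/2}|\Omega|^{1/4}\trb{v}$. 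This loses $\sqrt\pi$ compared with the claim. Using Poincaré with Faber--Krahn for $\|g\|_{L^2}$ instead gives
\[
\|g\|_{L^4}^2\le \pi^{-1/2}\,\trb{v}^2\,|\Omega|^{1/2}/(\sqrt\pi\, j_{0,1}),
\]
so $\|g\|_{L^4}\le |\Omega|^{1/4}\trb{v}/\sqrt{\pi j_{0,1}}$. Since $\pi j_{0,1}\approx 7.55<\pi^2$, this is still short of $1/\pi$.

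\emph{Main obstacle.} Hitting the exact constant $1/\pi$ is the main obstacle. A two-dimensional sharp inequality of the above type seems to be needed, and the paper may intend a slightly different, cruder chain whose constants I cannot yet reconcile with $1/\pi$. I would first try the last route exactly as stated, then revisit whether the paper's normalisation of $\trb{\cdot}$ or a tighter use of Faber--Krahn makes the constant $1/\pi$ come out.
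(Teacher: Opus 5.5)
Your proposal does not prove the lemma. Every chain you try ends with a constant larger than $1/\pi\approx0.318$: roughly $0.41$, $0.41$, $\pi^{-1/2}\approx0.56$ and $(\pi j_{0,1})^{-1/2}\approx0.36$. You then leave the ``main obstacle'' open.

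The missing ingredient is the sharp Aubin--Talenti Sobolev inequality for $n=2$, $p=4/3$:
\[
\|f\|_{L^4(\R^2)}\le \pi^{-1}\|\nabla f\|_{L^{4/3}(\R^2)} .
\]
Talenti's formula gives exactly $1/\pi$ here, with extremal $(1+|x|^4)^{-1/2}$. The paper's proof runs as follows.
\begin{enumerate}
\item Apply this inequality to $f=\partial_j v\in H^1_0(\Omega)$, extended by zero.
\item Use H\"older, $\|\nabla f\|_{L^{4/3}(\Omega)}\le|\Omega|^{1/4}\|\nabla f\|_{L^2(\Omega)}$, to obtain $\|\partial_j v\|_{L^4}\le \pi^{-1}|\Omega|^{1/4}\|\nabla\partial_j v\|_{L^2}$.
\item Combine the components via $\|\nabla v\|_{L^4}^2=\|(\partial_1v)^2+(\partial_2v)^2\|_{L^2}\le\|\partial_1 v\|_{L^4}^2+\|\partial_2 v\|_{L^4}^2$.
\item Summing gives $\pi^2|\Omega|^{-1/2}\|\nabla v\|_{L^4}^2\le\trb{v}^2$.
\end{enumerate}
Your ``probable intended argument'' is a non-sharp derivation of this same $W^{1,4/3}\hookrightarrow L^4$ inequality. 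The isoperimetric bound $W^{1,1}\hookrightarrow L^2$ with constant $(4\pi)^{-1/2}$, applied to $|f|^2$ and followed by H\"older, only yields $\|f\|_{L^4}\le\pi^{-1/2}\|\nabla f\|_{L^{4/3}}$. That is exactly where your factor $\sqrt\pi$ is lost.

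Your Steps 1--3 fail only because you used the crude Ladyzhenskaya constant $C=1/2$. Poincar\'e with Faber--Krahn needs $C/(\pi j_{0,1}^2)\le\pi^{-4}$, i.e.\ $C\le j_{0,1}^2/\pi^3\approx0.1865$. The sharp (Weinstein) constant $C\approx0.1709$, which you quote yourself later, satisfies this. Apply the scalar inequality to $|\nabla v|\in H^1_0(\Omega)$, using $\|\nabla|\nabla v|\|_{L^2}\le\trb{v}$ and $\|\nabla v\|_{L^2}^2\le\lambda_1^{-1}\trb{v}^2$. This closes the argument with a marginally better constant, $(0.1709/18.17)^{1/4}\approx0.311$. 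The price is two deep sharp results (Weinstein's constant and Faber--Krahn), whereas the paper's Talenti--H\"older route needs only one and yields $1/\pi$ directly.
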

\begin{proof}
	The optimal constant $C=\pi^{-1}$ of the Sobolev embedding 
	$W^{1,4/3}(\R^2)\hookrightarrow L^{4}(\R^2)$ 
	is known from~\cite{Aub:ProblemesIsoperimetriquesEspaces1976,Tal:BestConstantSobolev1976}
	and a Hölder inequality reveals
	\begin{align}\label{eqn:C4_sharp}
		\|f\|_{L^4(\Omega)}\leq 1/\pi\|\nabla f\|_{L^{4/3}(\Omega)}\leq |\Omega|^{1/4}/\pi\|\nabla
		f\|_{L^2(\Omega)}\qquad\text{for all }f\in H^1_0(\Omega).
	\end{align}
	Given any $v\in V$, the definition of the $L^p$ norms and a triangle inequality provide
	\begin{align*}
		\|\nabla v\|_{L^4(\Omega)}^2
		&\equiv \|((\partial_{x_1}v)^2+(\partial_{x_2}v)^2)^{1/2}\|_{L^4(\Omega)}^2
		=\|(\partial_{x_1}v)^2+(\partial_{x_2}v)^2\|_{L^2(\Omega)}\\
		&\leq
		\|(\partial_{x_1}v)^2\|_{L^2(\Omega)} + \|(\partial_{x_2}v)^2\|_{L^2(\Omega)} \equiv
		\|\partial_{x_1}v\|_{L^4(\Omega)}^2 + \|\partial_{x_2}v\|_{L^4(\Omega)}^2.
	\end{align*}
	Since $\partial_{x_1}v,\partial_{x_2}v\in H^1_0(\Omega)$ from $v\in V\equiv H^2_0(\Omega)$, 
	this and \eqref{eqn:C4_sharp} result in
	{\reqnomode
	\begin{align*}
		\pi^2|\Omega|^{-1/2}\|\nabla v\|_{L^4(\Omega)}^2
		\leq \|\nabla \partial_{x_1}v\|_{L^{2}(\Omega)}^2+\|\nabla \partial_{x_2}v\|_{L^{2}(\Omega)}^2 =
		\trb{v}^2.\tag*{\qedhere}
	\end{align*}
	}
\end{proof}
\begin{lemma}[Lipschitz constant of $DG$]\label{lem:Lipschitz_explicit_NVS}
	\ref{ass:N1} holds for $L\coloneqq 2|\Omega|^{1/2}/\pi^{2}$.
\end{lemma}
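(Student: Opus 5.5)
Since $G(v)=\Gamma(v,v)$ is quadratic, the derivative is $DG(v;w)=\Gamma(v,w)+\Gamma(w,v)$, which is linear in $v$. Consequently, for all $v,w,\psi,\varphi\in V$,
\begin{align*}
	\big(DG(v)-DG(w)\big)(\psi;\varphi)=\Gamma(v-w,\psi,\varphi)+\Gamma(\psi,v-w,\varphi).
\end{align*}
So \ref{ass:N1} reduces to the bound $\|\Gamma\|\leq |\Omega|^{1/2}/\pi^2$ on the trilinear form $\Gamma:V\times V\times V\to\R$ from \eqref{eqn:NVS_G_def}. Granting that bound, the operator norm of $DG(v)-DG(w)$ is at most $2\|\Gamma\|\trb{v-w}=L\trb{v-w}$.

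To bound $\Gamma$, I apply the Hölder inequality as in the proof of \cref{thm:NVS_reg_roots}:
\begin{align*}
	\Gamma(v,w,\varphi)\leq\|\Delta v\|_{L^2(\Omega)}\|\curl w\|_{L^4(\Omega)}\|\nabla\varphi\|_{L^4(\Omega)}.
\end{align*}
Pointwise $|\curl w|=|\nabla w|$, so \cref{lem:W14_sharp} applies to both $L^4$ factors. It gives
\begin{align*}
	\|\curl w\|_{L^4(\Omega)}\|\nabla\varphi\|_{L^4(\Omega)}\leq |\Omega|^{1/2}\pi^{-2}\trb{w}\trb{\varphi}.
\end{align*}

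The one step needing care is the factor $\|\Delta v\|_{L^2(\Omega)}\leq\trb{v}$, where $\trb{v}=\|D^2v\|_{L^2(\Omega)}$ is the energy norm in the $m=2$ setting. For $v\in H^2_0(\Omega)$, density of $C_c^\infty(\Omega)$ and integration by parts twice give the identity $\|\Delta v\|_{L^2(\Omega)}=\|D^2 v\|_{L^2(\Omega)}$. Even without that identity, the pointwise inequality $|\Delta v|^2\leq 2|D^2v|^2$ would hold, but it would cost a factor $\sqrt2$; the equality is what makes the constant come out exactly.

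Combining the two displays yields $\|\Gamma\|\leq|\Omega|^{1/2}/\pi^2$, hence $L=2|\Omega|^{1/2}/\pi^2$. The only expected obstacle is the identity $\|\Delta v\|_{L^2(\Omega)}=\trb{v}$ on $H^2_0(\Omega)$, which is standard.
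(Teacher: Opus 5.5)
Your proposal is correct and follows the paper's proof essentially verbatim: the trilinear bound $\Gamma(v,\varphi,\psi)\leq|\Omega|^{1/2}\pi^{-2}\trb{v}\trb{\varphi}\trb{\psi}$ from H\"older, \cref{lem:W14_sharp}, and $\|\Delta v\|_{L^2(\Omega)}=\trb{v}$, combined with the splitting $DG(v)-DG(w)=\Gamma(v-w,\bullet)+\Gamma(\bullet,v-w)$. Your justification of $\|\Delta v\|_{L^2(\Omega)}=\|D^2v\|_{L^2(\Omega)}$ on $H^2_0(\Omega)$ by density and integration by parts twice fills in a step the paper uses without comment.
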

\begin{proof}
	\Cref{lem:W14_sharp} 
	and $\|\Delta v\|_{L^2(\Omega)}=\trb{v}$
	result for any $v,\varphi,\psi\in V$ in
	\begin{align*}
		\Gamma(v,\varphi,\psi)\leq \|\Delta v\|_{L^2(\Omega)}\|\nabla \varphi\|_{L^4(\Omega)}\|\nabla \psi\|_{L^4(\Omega)}\leq
		|\Omega|^{1/2}/\pi^2\trb{v}\trb{\varphi}\trb{\psi}.
	\end{align*}
	Hence the Lipschitz constant $L=2|\Omega|^{1/2}/\pi^2$ follows from 
	{\reqnomode
	\begin{align*}
		D\GGamma(v;\varphi,\psi) - D\GGamma(w;\varphi,\psi) 
		= \Gamma(v\!-\!w,\varphi,\psi) + \Gamma(\varphi,v\!-\!w,\psi)\leq
		L\,\trb{v-w}\trb{\varphi}\trb{\psi}.\tag*{\qedhere}
	\end{align*}}
\end{proof}

\subsection{Morley discretisation for Navier--Stokes}%
\label{sub:Morley}
The nonconforming Morley finite element method utilises the ansatz space
\begin{align*}%
	\Vnc\coloneqq\left\{p\in P_{\hspace{-.13em}2}(\T)\ \middle|
		\begin{array}{lc}
			p(z)\text{ is continuous at every }z\in\mathcal V(\Omega)\text{ and } p|_{\mathcal{V}(\partial\Omega)}=0,\\
			\int_E[\nabla p]_E\cdot\nu_E\,\mathrm ds = 0 \text{ vanishes for every edge }E\in\E
	\end{array}\hspace{-.5em}\right\}
\end{align*}
The energy scalar product $a(\bullet,\bullet) = a_\pw(\bullet,\bullet)|_{V\times V}$
extends to the piecewise energy scalar product $a_\pw:V_\pw\times V_\pw\to\R$ 
in the Hilbert space $V_\pw=V+V_\nc$~\cite[rem.~2.2]{CN:PrioriPosterioriError2021} by
\begin{align*}%
	a_\pw(v_\pw, w_\pw)\coloneqq \int_\Omega D^2_\pw v_\pw:D^2_\pw w_\pw\d x
	\qquad\text{for all }v_\pw,w_\pw\in V_\pw.%
\end{align*}%
\begin{defn}[Morley interpolation
	{\cite{CG:GuaranteedLowerEigenvalue2014}}]\label{def:I}
	Given any $v_\pw\in V_\pw$, the Morley interpolation $I:V_\pw\to V_\nc$ 
	defines the degrees of freedom of $Iv_\pw\in V_\nc$ by
	\begin{align*}
		(v_\pw-I v_\pw)(z)=0 
\quad\text{ and } \quad
		\int_E\nabla(v_\pw-Iv_\pw)\cdot \nu_E\d s =0\quad\text{ for
		}z\in\mathcal{V}(\Omega),E\in\E(\Omega).
	\end{align*}
\end{defn} 
Two following lemmas (without proof) recall well-known properties of 
the Morley interpolation and a known right-inverse, 
satisfying the assumptions in~\cref{sub:Nonconforming reference discretisation}.
\begin{lemma}[properties \cite{CG:GuaranteedLowerEigenvalue2014}]\label{lem:IM}
	The Morley interpolation $I:V_\pw\to V_\nc$ satisfies~\eqref{eqn:I1}
	and~\eqref{eqn:I_reg}.
	There exist $\kappa_1\leq 0.2983$ and%
	\footnote{The estimate $\kappa_2\leq 0.2359$ from~\cite{Ma:ConstantsL2Error2021} 
	(proven in~\cite[thm.~3.35.d]{Put:DirectGuaranteedLower2024}) slightly improves the 
	analytical upper bounds in~\cite[thm.~3]{CG:GuaranteedLowerEigenvalue2014}; 
	sharp numerical
	values for $\kappa_1,\kappa_2$ are given 
	in~\cite{LSL:OptimalEstimationFujino2019,CG:RateoptimalHigherorderAdaptive2024}.}
	$\kappa_2\leq 0.2359$ such that any $v_\pw\in V_\pw$ satisfy
	\begin{align}\label{eqn:kappa_approx}
		|h_T^{\ell-2}(1-I)\,v_\pw|_{H^\ell(T)}
		\leq \kappa_{2-\ell}|(1-I)\,v_\pw|_{H^2(T)}
		\leq \kappa_{2-\ell}|v_\pw|_{H^2(T)}\quad\text{for all }\ell=0,1.
	\end{align}
\end{lemma}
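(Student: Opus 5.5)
The plan is to verify the three parts of \cref{lem:IM} separately: the Galerkin property~\eqref{eqn:I1}, the interpolation estimate~\eqref{eqn:I_reg}, and the local bounds~\eqref{eqn:kappa_approx}. Each reduces to an integration-by-parts identity on a single triangle that uses the degrees of freedom in \cref{def:I}. At the outset we must check that $I$ is well defined on $V_\pw=V+V_\nc$. Every $v_\pw$ is continuous at the vertices, because $H^2$ functions are continuous in 2D and Morley functions are continuous at interior vertices. The edge integrals $\int_E\nabla v_\pw\cdot\nu_E\d s$ are single-valued: for $v\in V$ this holds by conformity, and for $V_\nc$ it holds by the edge-jump condition in the definition of $V_\nc$.

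\emph{Key identity.} Fix $T\in\T$ and set $e\coloneqq v_\pw-Iv_\pw$. For any $p\in P_0(T;\R^{2\times2}_{\mathrm{sym}})$, integrate by parts twice:
\[
\int_T D^2 e:p\d x=\int_{\partial T}(p\,\nu_T)\cdot\nabla e\d s .
\]
On each edge $E$, decompose $\nabla e=(\partial_\nu e)\nu_E+(\partial_\tau e)\tau_E$. The normal part integrates to zero by the edge degree of freedom. For the tangential part, $\int_E\partial_\tau e\d s$ equals the difference of the vertex values of $e$, and these vanish. For boundary vertices and edges, zero values hold for both $v_\pw$ and $Iv_\pw$. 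Since $p\,\nu_T$ is constant on $E$, this gives
\[
\int_T D^2_\pw(v_\pw-Iv_\pw)\d x=0 \quad\text{for every } T,
\]
that is, $\Pi_0 D^2_\pw v_\pw=D^2_\pw Iv_\pw$. Because $D^2_\pw v_\nc$ is piecewise constant for $v_\nc\in V_\nc$, the identity yields~\eqref{eqn:I1} immediately.

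\emph{Local estimates.} The identity also gives $|(1-I)v_\pw|_{H^2(T)}=\|(1-\Pi_0)D^2v_\pw\|_{L^2(T)}\le|v_\pw|_{H^2(T)}$, which is the second inequality in~\eqref{eqn:kappa_approx}. Combining it with a Poincaré estimate yields~\eqref{eqn:I_reg}:
\[
\|(1-\Pi_0)D^2v\|_{L^2(T)}\le \Const{P}h_T|v|_{H^3(T)}.
\]
For the first inequality, with $\ell=1$, observe that each component $\partial_j e$ has zero edge integral on at least one edge of $T$, in fact on all three. Indeed, $\int_E\nabla e\d s=0$ componentwise by the argument above. A Poincaré/trace-type inequality for functions with vanishing edge mean then gives $\|\nabla e\|_{L^2(T)}\le\kappa_1h_T|e|_{H^2(T)}$. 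For $\ell=0$, $e$ vanishes at the vertices, and an interpolation-type estimate $\|e\|_{L^2(T)}\le C h_T\|\nabla e\|_{L^2(T)}$ combined with the $\ell=1$ bound gives $\kappa_2$.

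\emph{Main obstacle.} The hardest step is obtaining the explicit numerical values $\kappa_1\le0.2983$ and $\kappa_2\le0.2359$ uniformly over all triangle shapes. That requires sharp edge-mean Poincaré constants, for example $\kappa_1$ via the Crouzeix--Raviart-type bound $\sqrt{1/48+1/j_{1,1}^2}$, together with a careful analysis of the $L^2$ constant. For these constants we would rely on the cited references~\cite{CG:GuaranteedLowerEigenvalue2014,Ma:ConstantsL2Error2021} rather than re-derive them.
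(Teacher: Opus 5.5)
The paper states this lemma without proof and refers to the cited literature, so there is no in-paper argument to compare against. Most of your proposal is correct and follows the standard route: well-definedness of $I$ on $V_\pw$, the identity $D^2_\pw Iv_\pw=\Pi_0D^2_\pw v_\pw$ (one integration by parts already suffices), \eqref{eqn:I1}, \eqref{eqn:I_reg} via Poincar\'e, and the second inequality in \eqref{eqn:kappa_approx}. The case $\ell=1$ is also right. Since $\int_E\nabla e\,\mathrm{d}s=0$ on all three edges, each $\partial_j e$ equals its own Crouzeix--Raviart interpolation error, and this gives $\kappa_1\le\sqrt{1/48+1/j_{1,1}^2}\approx0.2983$.

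The case $\ell=0$, however, rests on a false inequality. No constant $C$ satisfies $\|e\|_{L^2(T)}\le C h_T\|\nabla e\|_{L^2(T)}$ for all $e\in H^2(T)$ that vanish at the vertices, because in two dimensions point values are not controlled by the $H^1$ seminorm. Here is a concrete counterexample.

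\emph{Construction.} Fix a vertex $z$ and small $0<\delta<\rho$. Let $\chi$ be smooth, vanishing on $(-\infty,0]$ and equal to $1$ on $[1,\infty)$. Set $\psi_z(x)=\chi\big(\log(|x-z|/\rho)/\log(\delta/\rho)\big)$.

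\emph{Properties.} The function $\psi_z$ is smooth, satisfies $\psi_z(z)=1$ and $0\le\psi_z\le1$, is supported in the ball of radius $\rho$ about $z$, and satisfies $\|\nabla\psi_z\|_{L^2}^2\le 2\pi\|\chi'\|_\infty^2/\log(\rho/\delta)$. Because it is radial about $z$, its normal derivative vanishes on the two edges through $z$.

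\emph{Conclusion.} Put $e:=1-\sum_z\psi_z$ with $\rho$ small. Then all Morley degrees of freedom of $e$ vanish, so $e$ even lies in the range of $1-I$ locally, and the extra edge constraints you have do not help. Yet $\|e\|_{L^2(T)}\ge|T|^{1/2}-3\sqrt{\pi}\rho$, while $\|\nabla e\|_{L^2(T)}\to0$ as $\delta\to0$.

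Hence your chain $\|e\|_{L^2(T)}\le Ch_T\|\nabla e\|_{L^2(T)}\le C\kappa_1h_T^2|e|_{H^2(T)}$ breaks at its first link. The $\ell=0$ bound has to use second derivatives directly. For instance, $e$ vanishes at the vertices, so $e=e-I_1e$ for the nodal $P_1$ interpolant $I_1$, which is well defined since $H^2(T)\subset C^0(\overline T)$. The Lagrange estimate $\|e-I_1e\|_{L^2(T)}\le C h_T^2|e|_{H^2(T)}$ then yields existence of $\kappa_2$. The explicit value $\kappa_2\le0.2359$ must come from the $H^2$-based analysis in the cited references, not from a product $C\kappa_1$.
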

Observe that~\eqref{eqn:kappa_approx} implies the approximation property~\eqref{eqn:kappa}
for $s=\ell=1$ with $\|\bullet\|_s=|\bullet|_{H^s(\Omega)}$.
The construction of a right-inverse $J\in L(V_\nc;V)$ for $I$
dates back to
\cite{Gal:MorleyFiniteElement2015,VZ:QuasioptimalNonconformingMethods2019} 
based on the conforming HCT element \cite[Chap.~6]{Cia:FiniteElementMethod2002}.
\begin{lemma}[right-inverse \cite{Gal:MorleyFiniteElement2015}]
\label{lem:MorleyCompanion}
There exists a bounded linear right-inverse $J: V_\nc\to V\cap W^{2,\infty}(\Omega)$
for $I$ (that is~\eqref{eqn:JI}) that is quasi-optimal in the sense of~\eqref{eqn:J_qo} with 
$\LJ\approx1\approx\|J\|$ exclusively depending on the shape-regularity.
\end{lemma}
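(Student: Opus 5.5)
The lemma is a known result (Gallistl), stated without proof in the paper. I would therefore give a constructive sketch based on an HCT-type enrichment operator, rather than a self-contained derivation.

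\emph{Step 1: the base operator.} Let $\T_{\rm HCT}$ denote the Hsieh--Clough--Tocher finite element space on $\T$ with homogeneous clamped boundary data, which is a subspace of $V\cap W^{2,\infty}(\Omega)$. Define a first enrichment $J_1:V_\nc\to \T_{\rm HCT}$ by averaging the HCT degrees of freedom of the piecewise quadratic $v_\nc$:
\begin{itemize}
\item vertex values: $v_\nc(z)$ at interior vertices $z$ (already continuous), and $0$ on $\partial\Omega$;
\item vertex gradients: the arithmetic mean of $\nabla(v_\nc|_T)(z)$ over the triangles $T$ containing $z$, and $0$ on $\partial\Omega$;
\item edge-midpoint normal derivatives: the mean of the two one-sided traces, and $0$ on $\partial\Omega$.
\end{itemize}

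\emph{Step 2: the standard averaging estimate.} Scaling arguments together with the fact that the jumps of $v_\nc$ and $\nabla_\pw v_\nc$ have vanishing Morley moments give the local bound
\[
\|D^2_\pw(v_\nc-J_1v_\nc)\|_{L^2(T)}\lesssim \sum_{E\cap T\neq\emptyset} h_E^{-1/2}\|[\nabla_\pw v_\nc]_E\|_{L^2(E)}.
\]
A discrete trace argument then bounds the jump terms by $\min_{v\in V}\trbpw{v-v_\nc}$, since conforming functions have no jumps. This is the usual proof that the averaging error is controlled by the distance to $V$.

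\emph{Step 3: restoring the right-inverse property~\eqref{eqn:JI}.} $J_1$ preserves the vertex values of $v_\nc$ but generally not the edge moments $\int_E\partial_\nu v_\nc\,\d s$. For each interior edge $E$, choose a bubble $b_E\in H^2_0(\omega_E)\cap W^{2,\infty}$ (for instance a squared cubic edge bubble) that vanishes at all vertices, has vanishing normal-derivative moments on every edge $E'\neq E$, satisfies $\int_E\partial_{\nu_E} b_E\,\d s=1$, and obeys $\|D^2 b_E\|_{L^2}\lesssim h_E^{-1/2}$. Set
\[
Jv_\nc\coloneqq J_1v_\nc+\sum_{E\in\E(\Omega)}\Big(\int_E\partial_{\nu_E}(v_\nc-J_1v_\nc)\,\d s\Big)\, b_E .
\]
Then $IJv_\nc=v_\nc$ holds by construction of the Morley degrees of freedom. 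Each correction coefficient is bounded, via a trace inequality, by $h_E^{1/2}\|D^2_\pw(v_\nc-J_1v_\nc)\|_{L^2(\omega_E)}$, so the quasi-optimality of Step 2 carries over to $J$ and yields~\eqref{eqn:J_qo}. The bound $\|J\|\lesssim 1$ follows from the triangle inequality
\[
\trb{Jv_\nc}\le\trbpw{v_\nc}+\trbpw{(1-J)v_\nc},
\]
combined with $\LJ\approx 1$ (take $v=0$ in the minimum). All constants depend only on shape regularity.

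\emph{Main obstacle.} The hardest step is constructing the bubbles $b_E$ with the decoupled normal moments inside the HCT-type spaces. If that fails, I would instead use $H^2_0(\omega_E)$ polynomial bubbles of higher degree. This is acceptable because the lemma only requires $J$ to map into $V\cap W^{2,\infty}(\Omega)$, not into the HCT space.
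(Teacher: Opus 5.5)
Your sketch is correct and is essentially the construction behind the cited result; the paper gives no proof of its own, only the reference to Gallistl and the remark that $J$ is built from HCT degrees of freedom. That construction averages the HCT data of $v_\nc$ and adds edge bubbles that restore the Morley edge moments, after which $IJ=\mathrm{id}$, quasi-optimality via jump control, and $\|J\|\le 1+\LJ$ follow as you describe. Two small repairs are needed: by scaling, the correction coefficient is bounded by $h_E\|D^2_\pw(v_\nc-J_1v_\nc)\|_{L^2(\omega_E)}$ and $\|D^2b_E\|_{L^2(\omega_E)}\approx h_E^{-1}$ (each of your exponents $\pm 1/2$ is off, only their product is right), and your ``main obstacle'' disappears, since the full-HCT nodal basis function dual to the midpoint normal derivative on $E$, divided by $2|E|/3$, already has support $\omega_E$, vanishing vertex values and gradients, zero normal derivative on all other edges, and unit normal-derivative moment on $E$, so $J$ even maps into the HCT space (equivalently, one just adjusts the midpoint degree of freedom via Simpson's rule).
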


To define the discrete nonlinear map $N_h:V_\pw\to V_\nc^*$ by~\eqref{eqn:DSWP},
the semilinearity $G=G_\pw|_{V}$ is extended by piecewise action of the differential operators
to $G_\pw(\bullet) \coloneqq\Gamma_{\pw}(\bullet,\bullet)\in C^1(V_\pw;V^*)$, 
expressed through $\Gamma_\pw:V_\pw\times V_\pw\to V^*$ with
\begin{align*}%
	\Gamma_{\pw}(v_\pw, w_\pw,\varphi) 
	\coloneqq\int_\Omega\left(\Delta_\pw v_\pw\right)\,
	\left(\curl_\pw w_\pw\right)\cdot\left(\nabla\varphi\right)\d x
	\qquad\text{for all }v_\pw,w_\pw\in V_\pw, \varphi\in V.
\end{align*} 
The well studied a priori and a posteriori error analysis%
~\cite{CMN:NonconformingFiniteElement2021,CNRS:UnifiedPrioriAnalysis2023,CGN:PosterioriErrorControl2024,CG:AdaptiveMorleyFEM2025} for the corresponding Morley FEM~\eqref{eqn:ADWP} 
guarantees the unique existence of a discrete root $u_\nc\in V_\nc$
of~\eqref{eqn:DSWP} in a neighbourhood of any regular solution $u\in V$ (cf.~\cref{thm:abstract_apriori}), and
quasi-optimal convergence rates of an adaptive loop.

The remaining parts of this section discuss 
the computation of the discrete quantities in \cref{tab:sol_verification}
required for the solution verification framework in 
\cref{sec:Existence verification for fourth-order problems,sec:The_continuous_inf--sup_constant}
for a general $v_\nc\in V_\nc$.
These constants relate to the Fr\'echet derivative of
$\GGamma_\pw$ with
\begin{align}\label{eqn:DGG_vt}
D\GGamma_\pw(v_\pw;w_\pw,\varphi) = \Gamma_\pw(v_\pw,w_\pw,\varphi) + \Gamma_\pw(w_\pw, v_\pw,\varphi)
\end{align}
for all $v_\pw,w_\pw\in V_\pw,\varphi\in V$.
A generalised Hölder inequality enables explicit estimates.%
\begin{lemma}[bound on $\Gamma_\pw$]\label{lem:G_gen_Hoelder}
	Given $1\leq p_1,p_2,p_3\leq \infty$
with $\frac{1}{p_1}+\frac{1}{p_2}+\frac{1}{p_3}=1$ and $t\in\R$,
\begin{align}\label{eqn:G_gen_Hoelder}
	\Gamma_\pw(v_\pw,w_\pw,\varphi)\leq\|h_\T^t\Delta_\pw v_\pw\|_{L^{p_1}(\Omega)}
	\|h_\T^{-t}\nabla_\pw w_\pw\|_{L^{p_2}(\Omega)}\|\nabla\varphi\|_{L^{p_3}(\Omega)}
\end{align}
holds for any $v_\pw,w_\pw\in V_\pw, \varphi\in V$
provided that the norms in the upper bound are finite.
\end{lemma}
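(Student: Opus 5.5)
The plan is a pointwise generalised Hölder inequality after inserting the mesh weights, followed by summation over the triangles. First I fix $v_\pw,w_\pw\in V_\pw$ and $\varphi\in V$. By definition of $\Gamma_\pw$, the integrand is $(\Delta_\pw v_\pw)(\curl_\pw w_\pw)\cdot\nabla\varphi$ a.e. in $\Omega$. Since $h_\T$ is piecewise constant and positive, $h_\T^t h_\T^{-t}=1$ a.e. for every $t\in\R$. Hence the integrand equals $(h_\T^t\Delta_\pw v_\pw)(h_\T^{-t}\curl_\pw w_\pw)\cdot\nabla\varphi$.

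Next I bound the integrand pointwise. The Cauchy--Schwarz inequality in $\R^2$ gives
\begin{align*}
|\curl_\pw w_\pw\cdot\nabla\varphi|\leq|\curl_\pw w_\pw|\,|\nabla\varphi| = |\nabla_\pw w_\pw|\,|\nabla\varphi|,
\end{align*}
where the last step uses that the rotated gradient has the same Euclidean length as the gradient, $|\curl\bullet|=|\nabla\bullet|$, applied on each triangle. Therefore
\begin{align*}
\Gamma_\pw(v_\pw,w_\pw,\varphi)\leq\int_\Omega |h_\T^t\Delta_\pw v_\pw|\,|h_\T^{-t}\nabla_\pw w_\pw|\,|\nabla\varphi|\d x .
\end{align*}

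Finally I apply the generalised Hölder inequality for three factors with $\frac1{p_1}+\frac1{p_2}+\frac1{p_3}=1$. This follows from the two-factor Hölder inequality applied twice: first with exponents $p_1$ and $q$, where $\frac1q=\frac1{p_2}+\frac1{p_3}$, and then with exponents $p_2/q$ and $p_3/q$ on the product of the last two factors. The cases where some $p_j=\infty$ are handled by the convention $1/\infty=0$ and the corresponding $L^\infty$ bound. The finiteness assumption on the three norms ensures the integral is absolutely convergent, so every step is justified, and this yields~\eqref{eqn:G_gen_Hoelder}.

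No step is a genuine obstacle. The only points needing care are the boundary cases with $p_j=\infty$ and the observation that the weights $h_\T^{\pm t}$ are well defined because $h_T>0$ on every triangle.
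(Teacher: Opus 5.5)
Your proposal is correct and follows the paper's own (brief) argument: insert the weights $h_\T^{t}h_\T^{-t}=1$, use $|\curl_\pw w_\pw|=|\nabla_\pw w_\pw|$ with a pointwise Cauchy--Schwarz inequality, and then apply the generalised three-factor Hölder inequality. Your reduction of the three-factor Hölder inequality to two applications of the two-factor version, including the cases $p_j=\infty$, supplies the details the paper omits.
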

\begin{proof}
	This is a generalised Hölder inequality with $|\curl\bullet|=|\nabla\bullet|$ and
	$\Gamma_\pw(\vt,\wt,\varphi)\leq\|h_\T^{t}\Delta_\pw\vt\;h_{\T}^{-t}\curl_\pw\wt\cdot\nabla\varphi\|_{L^1(\Omega)}$;
	further details are omitted.
\end{proof}
The improved Sobolev regularity $D\GGamma_\pw(Jv_\nc)\in L(\Vt;H^{-s}(\Omega))$ required in 
\cref{thm:continuous_inf_sup} is satisfied with $s=1$
and we set $\|\bullet\|_1\coloneqq|\bullet|_{H^1(\Omega)}$.
\begin{lemma}[boundedness of $D\GGamma_\pw(Jv_\nc)$]
	It holds $D\GGamma_\pw(J v_\nc)\in L(\Vt;H^{-1}(\Omega))$.
\end{lemma}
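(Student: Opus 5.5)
The plan is to bound $D\GGamma_\pw(Jv_\nc;w_\pw,\varphi)$ for $w_\pw\in\Vt$ and $\varphi\in V$ by $\lesssim\trbpw{w_\pw}\,|\varphi|_{H^1(\Omega)}$. Since $V$ is dense in $H^1_0(\Omega)$, such a bound extends the functional $D\GGamma_\pw(Jv_\nc;w_\pw)$ uniquely to an element of $H^{-1}(\Omega)$ with norm $\lesssim\trbpw{w_\pw}$, which is the claim. By \eqref{eqn:DGG_vt} with $\ut\coloneqq Jv_\nc$ we split
\begin{align*}
D\GGamma_\pw(\ut;w_\pw,\varphi)=\Gamma_\pw(\ut,w_\pw,\varphi)+\Gamma_\pw(w_\pw,\ut,\varphi),
\end{align*}
and estimate each term with \cref{lem:G_gen_Hoelder} for $t=0$.

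The second term is the easy one. \Cref{lem:MorleyCompanion} gives $\ut\in V\cap W^{2,\infty}(\Omega)$, hence $\nabla\ut\in L^\infty(\Omega)$ by the embedding $W^{1,\infty}\subset L^\infty$ on the bounded domain. Taking $(p_1,p_2,p_3)=(2,\infty,2)$ yields
\begin{align*}
\Gamma_\pw(w_\pw,\ut,\varphi)\le\|\Delta_\pw w_\pw\|_{L^2(\Omega)}\|\nabla\ut\|_{L^\infty(\Omega)}|\varphi|_{H^1(\Omega)}\le\sqrt2\,\trbpw{w_\pw}\|\nabla\ut\|_{L^\infty(\Omega)}|\varphi|_{H^1(\Omega)},
\end{align*}
using $|\Delta_\pw\bullet|\le\sqrt2|D^2_\pw\bullet|$ pointwise.

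The first term needs $(p_1,p_2,p_3)=(\infty,2,2)$, which is admissible because $\Delta\ut\in L^\infty(\Omega)$ by \cref{lem:MorleyCompanion}. This gives
\begin{align*}
\Gamma_\pw(\ut,w_\pw,\varphi)\le\|\Delta\ut\|_{L^\infty(\Omega)}\|\nabla_\pw w_\pw\|_{L^2(\Omega)}|\varphi|_{H^1(\Omega)}.
\end{align*}
So we still need $\|\nabla_\pw w_\pw\|_{L^2(\Omega)}\lesssim\trbpw{w_\pw}$ on $\Vt=V+V_\nc$, i.e.\ a discrete Friedrichs inequality. I expect this to be the main (though mild) obstacle. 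I would write $w_\pw=v+w_\nc$ and use $\|\nabla_\pw w_\pw\|\le\|\nabla_\pw I w_\pw\|+\|\nabla_\pw(1-I)w_\pw\|$. The second part is handled by \eqref{eqn:kappa_approx} with $\ell=1$, giving $\le\kappa_1h_{\max}\trbpw{(1-I)w_\pw}\le\kappa_1h_{\max}\trbpw{w_\pw}$. For the first part I would pass to the conforming function $JIw_\pw\in V$. The Friedrichs inequality on $H^2_0(\Omega)$ bounds $\|\nabla JIw_\pw\|\lesssim\|J\|\trbpw{w_\pw}$. The difference $(1-J)Iw_\pw$ satisfies $I(1-J)Iw_\pw=0$, so \eqref{eqn:kappa_approx} again controls its piecewise gradient by $\kappa_1h_{\max}(1+\|J\|)\trbpw{w_\pw}$. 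Alternatively, one may simply cite the standard discrete Friedrichs inequality for Morley functions.

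Combining the two estimates shows $\|D\GGamma_\pw(\ut;w_\pw)\|_{1,*}\lesssim\bigl(\|\Delta\ut\|_{L^\infty(\Omega)}+\|\nabla\ut\|_{L^\infty(\Omega)}\bigr)\trbpw{w_\pw}$. This is finite and linear in $w_\pw$, and with the density argument above it concludes the proof.
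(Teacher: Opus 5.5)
Your proof is correct and follows the paper's argument. It uses the same split via \eqref{eqn:DGG_vt}, the same H\"older exponents in \cref{lem:G_gen_Hoelder} based on $Jv_\nc\in W^{2,\infty}(\Omega)$, the bound $\|\Delta_\pw\bullet\|_{L^2(\Omega)}\le\sqrt2\trbpw{\bullet}$, a discrete Friedrichs inequality for $\|\nabla_\pw w_\pw\|_{L^2(\Omega)}$, and the supremum over $\varphi\in V\subset H^1_0(\Omega)$. The only difference is that the paper cites the discrete Sobolev inequality of \cite[lem.~7.6]{CNRS:UnifiedPrioriAnalysis2023}, whereas you derive it through $JIw_\pw$, the Friedrichs inequality in $H^2_0(\Omega)$, and \eqref{eqn:kappa_approx} applied to the two kernel-of-$I$ pieces; that self-contained derivation is also valid.
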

\begin{proof}
	\Cref{lem:G_gen_Hoelder} applies to $\vt=Jv_\nc\in W^{2,\infty}(\Omega)$ with 
	$\Delta Jv_\nc,|\nabla Jv_\nc|\in L^\infty(\Omega)$ and any $\wt=w_\pw\in
	\Vt\equiv V+\Vnc,\varphi\in V$ and verifies
	\begin{equation*}%
		\begin{aligned}
			\Gamma_\pw(Jv_\nc, w_\pw, \varphi)
			&\leq\|\Delta Jv_\nc\|_{L^\infty(\Omega)}\|\nabla_\pw
			w_\pw\|_{L^2(\Omega)}\|\nabla \varphi\|_{L^2(\Omega)},\\
			\Gamma_\pw(w_\pw, Jv_\nc, \varphi)
			&\leq\|\Delta_\pw w_\pw\|_{L^2(\Omega)}\|\nabla
			Jv_\nc\|_{L^\infty(\Omega)}\|\nabla \varphi\|_{L^2(\Omega)}.
		\end{aligned}
	\end{equation*}
	The discrete Sobolev inequality~\cite[lem.\ 7.6]{CNRS:UnifiedPrioriAnalysis2023} implies
	$\|\nabla_\pw w_\pw\|_{L^2(\Omega)}\lesssim\trbpw{w_\pw}$.
	This, $\|\Delta_\pw w_\pw\|_{L^2(\Omega)}\leq\sqrt2\trbpw{w_\pw}$, 
	and~\eqref{eqn:DGG_vt}~reveal
	\begin{align*}
		D\GGamma_\pw(J v_\nc;w_\pw,\varphi)
		&\lesssim\left(\|\Delta Jv_\nc\|_{L^\infty(\Omega)}+\|\nabla
Jv_\nc\|_{L^\infty(\Omega)}\right)\trbpw{w_\pw}\|\nabla \varphi\|_{L^2(\Omega)}.
	\end{align*}
	The supremum over all $w_\pw\in \Vt$ and $\varphi\in V\subset H^1_0(\Omega)$ concludes the proof with
	{\reqnomode
	\begin{align*}
		\|D\GGamma_\pw(J v_\nc)\|_{L(\Vt;H^{-1}(\Omega))}\lesssim 
		\|\Delta Jv_\nc\|_{L^\infty(\Omega)}\!+\!\|\nabla
		Jv_\nc\|_{L^\infty(\Omega)}\lesssim\|Jv_\nc\|_{W^{2,\infty}(\Omega)}<\infty.\tag*{\qedhere}
	\end{align*}%
}
\end{proof}

\subsection{Explicit constants}%
\label{sub:Explicit constants}

Recall $\kappa_1\leq 0.2983,\kappa_2\leq 0.2359$ from~\cref{lem:IM} and 
define the triangulation parameters
\begin{align*}
	\Const{tr,1}\coloneqq\max_{T\in\mathcal{T}}\max_{x\in T}|x-\operatorname{mid}(T)|/h_T<2/3
	\qquad\text{and}\qquad
	\mathfrak{h}(E)\coloneqq3|E|/\left(h_{T_+}^{-4}|T_+|+ h_{T_-}^{-4}|T_-|\right)
\end{align*}
for any interior edge $E=\partial T_+\cap \partial T_-\in\mathcal{E}(\Omega)$ 
shared by $T_\pm\in\T$.
The $L^2(\Omega)$ source $F\equiv (f,\bullet)_{L^2(\Omega)}$ 
in~\eqref{eqn:intro_NVS_SP} and its weak formulation~\eqref{eqn:SWP} is represented by
$f\in L^2(\Omega)$ in
\begin{align*}
	\mu_{\textrm{res}}(\T)\coloneqq \kappa_2\|h_\T^2 f\|_{L^2(\Omega)} + \sqrt{\kappa_2^2+
	\Const{tr,1}\kappa_1\kappa_2}\sqrt{\sum^{}_{E\in\E(\Omega)} \mathfrak{h}(E)\|\jump{\Delta v_\nc \nabla v_\nc}\cdot\tau_E\|_{L^2(E)}^2}.
\end{align*}
(General sources $F\in V^*$ can be treated with the techniques from~\cite[sec.\ 7]{CGN:UnifyingPosterioriError2024}.)
With Poincar\'e constant $\Const{P}\leq 1/\pi$, consider for any $v_\nc\in V_\nc$ and $w\in V$
the abbreviations
\begin{align}\label{eqn:gp_def}
	g_\infty(v_\nc)&\coloneqq{\sqrt2\|\nabla J v_\nc\|_{L^\infty(\Omega)} 
		+ \kappa_1\|h_\T\Delta J v_\nc\|_{L^\infty(\Omega)}},\\
	\Const{G}(w)&\coloneqq 
	\sqrt{2}\bigg(\frac{|\Omega|^{1/4}}{\pi}\|(1-\Pi_0)\nabla w\|_{L^4(\Omega)}+\Const{P}
\|h_\T \Pi_0\nabla w \|_{L^\infty(\Omega)}\bigg).\label{eqn:CG_def}
\end{align}
Finally, define the bilinear map $\gamma:\Vnc\times\Vnc\to L^2(\Omega;\mathbb{R}^2)$ 
for all $w_\nc,\varphi_\nc\in\Vnc$ by
\begin{align}\label{eqn:gamma_form_def}
	\gamma(w_\nc, \varphi_\nc)\coloneqq\Delta Jw_\nc\nabla_\pw \varphi_\nc + \Delta_\pw \varphi_\nc \nabla Jw_\nc\in
	L^2(\Omega;\mathbb{R}^2).
\end{align}
Recall the constants $L_G, \mu_{\textrm{res}}, \Constb{1}, \Constb{2}, \Constb{3}$ from
\cref{thm:explicit_residual_control,thm:continuous_inf_sup}.
\begin{lemma}[explicit constants for Navier--Stokes]\label{lem:explicit_constants_NVS}
	Given $v_\nc\in\Vnc$ from~\ref{ass:N}, then
	\begin{lemlist}
		\item the estimates~\eqref{eqn:G_Lipschitz}--\eqref{eqn:discrete_residual} in
			\cref{sub:Morley_reference_discretisation} hold for $F\equiv f\in L^2(\Omega)$
			in~\eqref{eqn:SWP} with
			\begin{align*}
				L_G\coloneqq \Const{G}(Jv_\nc) + \kappa_1\frac{|\Omega|^{1/4}}{\pi} \|h_\T\Delta_\pw v_\nc\|_{L^4(\Omega)},\quad \mu_{\textrm{res}}\coloneqq \mu_{\textrm{res}}(\T),
			\end{align*}
		\item the estimates~\eqref{eqn:Constb_1}--\eqref{eqn:Constb_3} in~\cref{thm:continuous_inf_sup} hold with
			\begin{align*}
				\Constb{1}\coloneqq \lambda_{\min}^{-1/2},
				\quad \Constb{2}\coloneqq g_\infty(v_\nc),\quad\Constb{3}\coloneqq \Const{G}(Jv_\nc) + \kappa_1\frac{|\Omega|^{1/4}}{\pi} \|h_\T\Delta Jv_\nc\|_{L^4(\Omega)},
			\end{align*}
	\end{lemlist}
	where 
	$\lambda_{\min}$ is the smallest eigenvalue of the discrete EVP that seeks $(\lambda,z_\nc)\in \R\times \Vnc:$ 
	\begin{align}\label{eqn:DG_JvM_wM_EVP}
		a_\pw(z_\nc, \varphi_\nc) = \lambda\, b(z_\nc, \varphi_\nc) \qquad\text{for all }\varphi_\nc\in\Vnc
	\end{align}
	with semi-scalar product $b(\bullet,\bullet)\coloneqq (\gamma(v_\nc,\bullet), \gamma(v_\nc,\bullet))_{L^2(\Omega)}:\Vnc\times\Vnc\to \R$.
\end{lemma}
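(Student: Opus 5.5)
The plan is to reduce every estimate to the generalised Hölder inequality of \cref{lem:G_gen_Hoelder}, after first isolating a Morley interpolation error. Two facts about such errors will be used repeatedly. First, $e\coloneqq(1-J)v_\nc$ and $e\coloneqq(1-I)v$ for $v\in V$ both satisfy $Ie=0$: this follows from $IJ=\id$ and $I=\id$ on $V_\nc$ in the first case, and trivially in the second. Hence \eqref{eqn:kappa_approx} gives $\|h_\T^{-1}\nabla_\pw e\|_{L^2(\Omega)}\leq\kappa_1\trbpw{e}$, together with $\|\Delta_\pw e\|_{L^2(\Omega)}\leq\sqrt2\trbpw{e}$. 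Second, the Morley interpolation error has vanishing piecewise mean Hessian, $\Pi_0 D^2_\pw e=0$. This is an integration by parts on each $T$ using the degrees of freedom in \cref{def:I}, and I would have to check it separately for $e=(1-J)v_\nc$.

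\emph{Step 1: $L_G$ and $\Constb{3}$.} Write
\begin{align*}
	G_\pw(v_\nc)-G(Jv_\nc)=\Gamma_\pw(e,Jv_\nc)+\Gamma_\pw(v_\nc,e)
	\quad\text{with }e=(1-J)v_\nc .
\end{align*}
For the first term, split $\curl Jv_\nc=\Pi_0\curl Jv_\nc+(1-\Pi_0)\curl Jv_\nc$.

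The oscillating part is bounded with the exponents $(2,4,4)$ and \cref{lem:W14_sharp}. This gives $\sqrt2\,\trbpw{e}\,\|(1-\Pi_0)\nabla Jv_\nc\|_{L^4(\Omega)}\,|\Omega|^{1/4}\pi^{-1}\trb{\varphi}$.

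For the piecewise constant part, $\Pi_0\Delta_\pw e=0$ allows replacing $\nabla\varphi$ by $(1-\Pi_0)\nabla\varphi$ on each $T$. The Poincaré inequality then yields $\sqrt2\,\Const{P}\|h_\T\Pi_0\nabla Jv_\nc\|_{L^\infty(\Omega)}\trbpw{e}\trb{\varphi}$. Together these two bounds give $\Const{G}(Jv_\nc)$.

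The second term is bounded with the exponents $(4,2,4)$ and weights $h_\T^{\pm1}$:
\begin{align*}
	\|h_\T\Delta_\pw v_\nc\|_{L^4(\Omega)}\,\kappa_1\trbpw{e}\,|\Omega|^{1/4}\pi^{-1}\trb{\varphi}.
\end{align*}
This proves \eqref{eqn:G_Lipschitz}. The identical argument with $e=(1-I)v$ and the expansion \eqref{eqn:DGG_vt} proves \eqref{eqn:Constb_3}; there $Jv_\nc$ replaces $v_\nc$ in the second slot.

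\emph{Step 2: $\Constb{1}$ and $\Constb{2}$.} Since $|\curl\bullet|=|\nabla\bullet|$ and the $\curl$ is a rotation of the gradient, \eqref{eqn:DGG_vt} reads
\begin{align*}
	DG_\pw(Jv_\nc;w_\pw,\varphi)=\int_\Omega R\,\gamma\cdot\nabla\varphi\d x
\end{align*}
with a fixed rotation $R$. For $w_\pw\in\Vnc$ this is $\gamma(v_\nc,w_\pw)$, and in general the analogous vector field. Taking the supremum over $\varphi\in H^1_0(\Omega)$ with $\|\bullet\|_1=|\bullet|_{H^1(\Omega)}$ gives
\begin{align*}
	\|DG_\pw(Jv_\nc;w_\nc)\|_{1,*}\leq\sqrt{b(w_\nc,w_\nc)}\leq\lambda_{\min}^{-1/2}\trbpw{w_\nc},
\end{align*}
where the last step is the Rayleigh quotient of \eqref{eqn:DG_JvM_wM_EVP}. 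This is $\Constb{1}$.

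For $w=(1-I)v$, the same representation with $L^\infty$–$L^2$ Hölder bounds gives
\begin{align*}
	\|h_\T\Delta Jv_\nc\|_{L^\infty(\Omega)}\,\|h_\T^{-1}\nabla_\pw w\|_{L^2(\Omega)}
	+\|\nabla Jv_\nc\|_{L^\infty(\Omega)}\,\|\Delta_\pw w\|_{L^2(\Omega)} .
\end{align*}
The two error facts above turn this into $g_\infty(v_\nc)\trbpw{w}$, which is $\Constb{2}$.

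\emph{Step 3: $\mu_{\textrm{res}}$, the main obstacle.} Let $\varphi\in(1-JI)V$, so that $I\varphi=0$. On each $T$, $v_\nc$ is quadratic, hence $\Delta v_\nc$ is constant and $\Delta v_\nc\nabla v_\nc$ is curl-free. A piecewise integration by parts of $\int_T\Delta v_\nc\curl v_\nc\cdot\nabla\varphi\d x$ therefore leaves only the edge terms $\int_E\jump{\Delta v_\nc\nabla v_\nc}\cdot\tau_E\,\varphi\d s$. Boundary edges drop out since $\varphi\in H^2_0(\Omega)$.

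The volume term $F(\varphi)=(f,\varphi)_{L^2(\Omega)}$ is bounded by $\kappa_2\|h_\T^2f\|_{L^2(\Omega)}\trb{\varphi}$ via \eqref{eqn:kappa_approx} with $\ell=0$.

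For the edge terms I would use the exact trace identity on a triangle. With $\varphi=(1-I)\varphi$, it bounds
\begin{align*}
	\|\varphi\|_{L^2(E)}^2\leq\frac{|E|}{3|T|}\Bigl(\|\varphi\|_{L^2(T)}^2+2\Const{tr,1}h_T\|\varphi\|_{L^2(T)}\|\nabla\varphi\|_{L^2(T)}\Bigr),
\end{align*}
and inserting $\kappa_2h_T^2$ and $\kappa_1h_T$ gives the factor $h_T^4(\kappa_2^2+\Const{tr,1}\kappa_1\kappa_2)|E|/(3|T|)\,|\varphi|_{H^2(T)}^2$. The delicate point is to distribute each interior edge between $T_+$ and $T_-$ with the optimal convex weights, so that the harmonic-type combination reproduces exactly $\mathfrak{h}(E)$. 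A final Cauchy–Schwarz over edges and elements then yields \eqref{eqn:discrete_residual}. I expect the constant bookkeeping in this trace step, namely matching $\Const{tr,1}$ and the factor $3$ in $\mathfrak{h}(E)$, to be where the argument needs the most care.
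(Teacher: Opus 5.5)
Your Steps 1 and 2, the elementwise integration by parts in Step 3 and the bound on $F(\varphi)$ all coincide with the paper's proof. In Step 1 you re-derive \cref{lem:sharp_Gpw_2_bound} inline. No separate check of $\Pi_0 D^2_\pw e=0$ is needed for $e=(1-J)v_\nc$, because $Ie=0$ gives $e=(1-I)e$.

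\textbf{The gap is the trace step.} Your displayed per-edge inequality is false. The divergence theorem for $\varphi^2(x-P_E)$, where $P_E$ is the vertex of $T$ opposite to $E$, gives the exact identity
\[
\|\varphi\|_{L^2(E)}^2=\frac{|E|}{|T|}\Big(\|\varphi\|_{L^2(T)}^2+\int_T\varphi\,\nabla\varphi\cdot(x-P_E)\d x\Big).
\]
So the prefactor is $|E|/|T|$, not $|E|/(3|T|)$; test with $\varphi\equiv1$. Restricting to $I\varphi=0$ does not rescue it. Along the hypotenuse of a right-isosceles triangle, a function concentrated in a thin layer with vanishing vertex values and vanishing normal-derivative edge means violates your bound.

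There are two further problems with that step. A Cauchy--Schwarz estimate taken edge by edge produces a constant built from $\max_{x\in T}|x-P_E|/h_T$. That constant is in general larger than $\Const{tr,1}$, which measures distances to the centroid. Also, inserting $\kappa_2h_T^2$ and $\kappa_1h_T$ into your own display yields $\kappa_2^2+2\Const{tr,1}\kappa_1\kappa_2$, not the factor you state. So your argument does not establish the constant in $\mu_{\textrm{res}}(\T)$.

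\textbf{The missing idea is to treat all three edges of $T$ at once with the centroid field.} Since $(x-\operatorname{mid}(T))\cdot\nu_T=2|T|/(3|E|)$ on every $E\in\E(T)$ and $\div(x-\operatorname{mid}(T))=2$,
\begin{align*}
\sum_{E\in\E(T)}\frac{|T|}{3|E|}\|\varphi\|_{L^2(E)}^2
&=\|\varphi\|_{L^2(T)}^2+\int_T\varphi\,\nabla\varphi\cdot(x-\operatorname{mid}(T))\d x\\
&\leq\|\varphi\|_{L^2(T)}^2+\Const{tr,1}h_T\|\varphi\|_{L^2(T)}\|\nabla\varphi\|_{L^2(T)}.
\end{align*}
This is the average of the single-edge identities, because the three opposite vertices average to $\operatorname{mid}(T)$.

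Multiply by $h_T^{-4}$ and sum over $T\in\T$. Every interior edge then collects exactly $(h_{T_+}^{-4}|T_+|+h_{T_-}^{-4}|T_-|)/(3|E|)=\mathfrak{h}(E)^{-1}$, so there are no weights to optimise. A Cauchy inequality over $\T$ and \eqref{eqn:kappa_approx} then give
\[
\sum_{E\in\E(\Omega)}\mathfrak{h}(E)^{-1}\|\varphi\|_{L^2(E)}^2\leq(\kappa_2^2+\Const{tr,1}\kappa_1\kappa_2)\trb{\varphi}^2 .
\]
This is the paper's route via the global trace inequality. Your convex-weight idea reduces to it only if you apply the weights to the single-edge \emph{identities} before any Cauchy--Schwarz step.
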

The following lemma enables the convergence $\Constb{3}\to 0$ that facilitates 
optimal asymptotics $\btT\sim\bht$, 
see \cref{rem:condition_cont_inf_sup} for details,
and precedes the proof of
\cref{lem:explicit_constants_NVS} below.%
\begin{lemma}\label{lem:sharp_Gpw_2_bound}
	Any $v_\pw\in \widehat V\equiv V+\Vnc$ with $I v_\pw=0$ and $w,\varphi\in V$ satisfy
	\begin{align*}%
		\Gamma_\pw(v_\pw, w, \varphi) 
		&\leq \Const{G}(w)\trbpw{v_\pw}\trb{\varphi}.
	\end{align*}
\end{lemma}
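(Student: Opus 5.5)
Starting from the definition,
\[
\Gamma_\pw(v_\pw,w,\varphi)=\int_\Omega \Delta_\pw v_\pw\,\curl w\cdot\nabla\varphi\d x ,
\]
I split $\curl w=\curl_\pw$-type data via $\nabla w=(1-\Pi_0)\nabla w+\Pi_0\nabla w$ (rotated accordingly; $|\curl\bullet|=|\nabla\bullet|$ and $\Pi_0$ commutes with the rotation). This gives two terms, and the two summands in $\Const{G}(w)$ indicate how each is handled.

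For the oscillation part, I apply \cref{lem:G_gen_Hoelder} with $(p_1,p_2,p_3)=(2,4,4)$ and $t=0$. This bounds it by
\[
\|\Delta_\pw v_\pw\|_{L^2(\Omega)}\,\|(1-\Pi_0)\nabla w\|_{L^4(\Omega)}\,\|\nabla\varphi\|_{L^4(\Omega)}.
\]
The pointwise inequality $|\Delta_\pw v|\le\sqrt2|D^2_\pw v|$ gives $\|\Delta_\pw v_\pw\|_{L^2}\le\sqrt2\trbpw{v_\pw}$, and \cref{lem:W14_sharp} gives $\|\nabla\varphi\|_{L^4}\le|\Omega|^{1/4}\pi^{-1}\trb{\varphi}$. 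Together these yield the first summand of $\Const{G}(w)$.

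For the piecewise constant part $\Pi_0\curl w$, I use $Iv_\pw=0$. The Morley interpolation preserves the piecewise Hessian integral mean: from \cref{def:I}, an integration by parts shows $\int_T D^2(v-Iv)\d x=0$, which is the mechanism behind \eqref{eqn:I1}. Hence $\Pi_0\Delta_\pw v_\pw=\Delta_\pw Iv_\pw=0$, so $\Delta_\pw v_\pw$ has vanishing mean on each $T$. On each triangle the vector $c_T\coloneqq\Pi_0\curl w|_T$ is constant, and therefore
\[
\int_T\Delta_\pw v_\pw\, c_T\cdot\nabla\varphi\d x=\int_T\Delta_\pw v_\pw\, c_T\cdot(1-\Pi_0)\nabla\varphi\d x .
\]
The piecewise Poincaré inequality $\|(1-\Pi_0)\nabla\varphi\|_{L^2(T)}\le \Const{P}h_T\|D^2\varphi\|_{L^2(T)}$ then applies. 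Cauchy--Schwarz on each $T$ and over $\T$ bounds this term by $\|h_\T\Pi_0\nabla w\|_{L^\infty}\,\sqrt2\trbpw{v_\pw}\,\Const{P}\trb{\varphi}$, using $\|D^2\varphi\|_{L^2}=\trb{\varphi}$. This is the second summand.

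Adding the two bounds gives exactly $\Const{G}(w)\trbpw{v_\pw}\trb{\varphi}$. The main point to check carefully is the identity $\Pi_0\Delta_\pw v_\pw=0$ for $v_\pw\in V_\pw$ with $Iv_\pw=0$. I would verify it elementwise by the integration by parts $\int_T\partial_{jk}v\d x=\int_{\partial T}\partial_j v\,\nu_k\d s$, which expresses the integral through edge integrals of the full gradient. These edge integrals are controlled by the normal-derivative degrees of freedom together with the vertex values, since tangential derivative integrals equal differences of vertex values. For $v_\pw\in V+\Vnc$ the relevant traces are well defined piecewise.

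On boundary edges the degrees of freedom are not listed in \cref{def:I}. There, however, both $v\in V$ and Morley functions have vanishing vertex values and vanishing normal-derivative integrals. So the identity holds for all edges, and the argument goes through.
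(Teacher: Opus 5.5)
Your proof is correct and follows essentially the same route as the paper. The paper splits $\Gamma_\pw(v_\pw,w,\varphi)$ into three integrals using $\curl w=(1-\Pi_0)\curl w+\Pi_0\curl w$ and $\nabla\varphi=(1-\Pi_0)\nabla\varphi+\Pi_0\nabla\varphi$, removes the doubly piecewise-constant term via $(\Delta_\pw(1-I)v_\pw,p_0)_{L^2(\Omega)}=0$ for $p_0\in P_0(\T)$, and bounds the other two exactly as you do with \cref{lem:W14_sharp} and the Poincar\'e inequality; your check that $\Pi_0$ of the full piecewise Hessian of $v_\pw$ vanishes is valid but more than needed, since $\int_T\Delta v\d x=\int_{\partial T}\nabla v\cdot\nu_T\d s$ involves only the normal-derivative edge integrals.
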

\begin{proof}
	Algebraic manipulations reveal for any $v_\pw\in V+\Vnc$ and $w,\varphi\in V$
	as in the proof of~\cite[lem.~8.9.c]{CNRS:UnifiedPrioriAnalysis2023} the decomposition
	\begin{align}\label{eqn:Gpw_vpw_w_vf_split}
		&\Gamma_\pw(v_\pw, w, \varphi) 
		= \int_\Omega \Delta_\pw v_\pw \left(( (1-\Pi_0)\curl w)\cdot \nabla \varphi\right)\d x \\
		&+\int_\Omega \Delta_\pw v_\pw \left((\Pi_0\curl w)\cdot ((1-\Pi_0)\nabla \varphi)\right)\d x
		+\int_\Omega \Delta_\pw v_\pw \left((\Pi_0\curl w)\cdot (\Pi_0\nabla \varphi)\right)\d x.\nonumber
	\end{align}
	Since $I v_\pw$ interpolates the integral mean
	of the normal derivative of $v_\pw\in V+\Vnc$ over an edge $E\in
	\E$ exactly,
	a piecewise integration by parts verifies for any $p_0\in P_{0}(\T)$ that
	\begin{align*}
		(\Delta_\pw (1-I) v_\pw, p_0)_{L^2(\Omega)}  =0.
	\end{align*}
	Hence the assumption $I v_\pw = 0$ results for $p_0\coloneqq (\Pi_0\curl w)\cdot (\Pi_0\nabla \varphi)\in
	P_0(\T)$ in
	\begin{align}\label{eqn:Gpw_vpw_w_vf_split3bound}
		\int_\Omega \Delta_\pw v_\pw \left((\Pi_0\curl w)\cdot (\Pi_0\nabla \varphi)\right)\d x = (\Delta_\pw I v_\pw, p_0)_{L^2(\Omega)} = 0.
	\end{align}
	A Hölder inequality and $|\nabla\bullet|=|\curl\bullet|$ 
	imply
	\begin{align*}
		\|((1-\Pi_0)\curl w)\cdot \nabla \varphi\|_{L^2(\Omega)}
					 &\leq\|(1-\Pi_0)\nabla w\|_{L^4(\Omega)}\|\nabla\varphi\|_{L^4(\Omega)}\\
					 &\leq
					 \frac{|\Omega|^{1/4}}{\pi}\|(1-\Pi_0)\nabla w\|_{L^4(\Omega)}\trb{\varphi},\\
		\|(\Pi_0\curl w)\cdot ((1-\Pi_0)\nabla \varphi)\|_{L^2(\Omega)}
					 &\leq\|h_\T\Pi_0\nabla
		w\|_{L^\infty(\Omega)}\|h_\T^{-1}(1-\Pi_0)\nabla\varphi\|_{L^2(\Omega)}\\
					 &\leq \Const{P}\|h_\T\Pi_0\nabla
		w\|_{L^\infty(\Omega)}\trb{\varphi}
	\end{align*}
	with the explicit Sobolev embedding, \cref{lem:W14_sharp}, and
	the Poincar\'e inequality in the respective last steps.
	A Cauchy inequality and $\|\Delta_\pw v_\pw\|_{L^2(\Omega)}\leq \sqrt{2}\trbpw{v_\pw}$
	reveal
	\begin{align*}
		\int_\Omega \Delta_\pw v_\pw
		\left(( (1-\Pi_0)\curl w)\cdot \nabla \varphi\right)\d x
			&\leq \sqrt2\frac{|\Omega|^{1/4}}{\pi}\trbpw{v_\pw}
			\|(1-\Pi_0)\nabla w\|_{L^4(\Omega)}\trb{\varphi},\\
		\int_\Omega \Delta_\pw v_\pw 
		\left(( \Pi_0\curl w)\cdot (1-\Pi_0)\nabla \varphi\right)\d x
			&\leq\sqrt2\Const{P}\trbpw{v_\pw}\|h_\T\Pi_0\nabla
			w\|_{L^\infty(\Omega)}\trb{\varphi}.
	\end{align*}
	This and~\eqref{eqn:Gpw_vpw_w_vf_split}--\eqref{eqn:Gpw_vpw_w_vf_split3bound} conclude the proof of
	\cref{lem:sharp_Gpw_2_bound} with $\Const{G}(w)$
	from~\eqref{eqn:CG_def}.
\end{proof}

\begin{proof}[Proof of~\cref{lem:explicit_constants_NVS}]

	\textbf{ad \eqref{eqn:G_Lipschitz}}: Algebraic manipulations with $\GGamma_\pw
	=\Gamma_\pw(\bullet,\bullet)$
	show
	\begin{align}\label{eqn:GvM_JvM_split}
		G_\pw(v_\nc) -G(Jv_\nc) = \Gamma_\pw(v_\nc, (1-J)v_\nc) +\Gamma_\pw((1-J)v_\nc, Jv_\nc).
	\end{align}
	\Cref{lem:G_gen_Hoelder} with exponents $p_1=2p_2=p_3=4$ and $t=1$ verifies
	\begin{align*}
		\Gamma_\pw(v_\nc, (1-J)v_\nc,\varphi)
		&\leq \|h_\T
		\Delta_\pw v_\nc\|_{L^4(\Omega)}\|h_{\T}^{-1}\nabla_\pw(1-J)v_\nc\|_{L^2(\Omega)}\|\nabla \varphi\|_{L^4(\Omega)}
	\end{align*}
	for any $\varphi\in V$.
	Since $I v_\pw=0$ for $v_\pw\coloneqq(1-J)v_\nc$ by~\eqref{eqn:JI},
	\cref{lem:IM} plus~\cref{lem:W14_sharp} in the previous estimate 
	and \cref{lem:sharp_Gpw_2_bound} establish
	\begin{align}\label{eqn:GvM_JvM_bound1}
		\trb{\Gamma_\pw(v_\nc, (1-J)v_\nc)}_*
		&\leq \kappa_1|\Omega|^{1/4}/\pi\|h_\T
		\Delta_\pw v_\nc\|_{L^4(\Omega)}\trbpw{(1-J) v_\nc},\\
		\label{eqn:GvM_JvM_bound2}
		\trb{\Gamma_\pw((1-J)v_\nc,Jv_\nc)}_*
		&\leq \Const{G}(Jv_\nc)\trbpw{(1-J)v_\nc}.
	\end{align}
	Finally, \eqref{eqn:GvM_JvM_split}--\eqref{eqn:GvM_JvM_bound2} verify
	\eqref{eqn:G_Lipschitz} for $L_G\coloneqq\Const{G}(Jv_\nc) + \kappa_1|\Omega|^{1/4}/\pi \|h_\T
			\Delta_\pw v_\nc\|_{L^4(\Omega)}$.

\medskip\noindent\textbf{ad \eqref{eqn:discrete_residual}}: Given $w=(1-JI) v\in V$ for $v\in V$, $I w=0$ vanishes from
the right-inverse property~\eqref{eqn:JI} of $J$.
A Cauchy inequality and~\cref{lem:IM} for $\ell=0$ reveal
	\begin{align*}
		|F(w)|=\left|\int_{\Omega}
		f\,w\d
		x\right|\leq\|h_\T^2f\|_{L^2(\Omega)}\|h_\T^{-2}w\|_{L^2(\Omega)}\leq \kappa_2\|h_\T^2f\|_{L^2(\Omega)}\trb{w}.
	\end{align*}
	Standard arguments with a piecewise integration by parts and a Cauchy inequality in
	$\ell^2$ with the modified mesh size $\mathfrak{h}(E)$ verify 
	(as in the proof of~\cite[lem.~4.2]{CG:AdaptiveMorleyFEM2025}) that
	\begin{align}\nonumber
		\Gamma_\pw(\uM,\uM,w) 
			&\leq \sqrt{\sum^{}_{E\in\E(\Omega)} \mathfrak{h}(E)\|[\Delta\uM\nabla\uM]_E\cdot
			\tau_E\|_{L^2(E)}^2}\sqrt{\sum^{}_{E\in\E(\Omega)}
		\mathfrak{h}(E)^{-1}\|w\|_{L^2(E)}^2}
	\end{align}
	A straightforward generalisation of the explicit global trace 
	inequality~\cite[lem.~3]{BCGT:StabilizationfreeHHOPosteriori2023} 
	and~\cref{lem:IM} (using $\Inc w=0$) control the last term by
	\begin{align*}
		\sum^{}_{E\in\E(\Omega)} \mathfrak{h}(E)^{-1}\big\Vert w\big\Vert_{L^2(E)}^{2} 
		&\leq \big\Vert h_\T^{-2}w\big\Vert^{2}_{L^2(\Omega)} + 
		\Const{tr,1}\big\Vert h_\T^{-2}w\big\Vert_{L^2(\Omega)}
		\|h_{\T}^{-1}\nabla w\|_{L^2(\Omega)}\\
		&\leq (\kappa_2^2+\Const{tr,1}\kappa_1\kappa_2)\trb{w}^2.
	\end{align*}
	The conclusion of the previously displayed estimates and a Cauchy inequality reads
	\begin{align*}
		|\Gamma_\pw(v_\nc,v_\nc, w) - F(w)|\leq \mu_{\textrm{res}}(\T)\trb{w}.
	\end{align*}
	This and $G_\pw(v_\nc;w) = \Gamma_\pw(v_\nc, v_\nc, w)$ conclude the proof of \eqref{eqn:discrete_residual}.

\medskip\noindent\textbf{ad \eqref{eqn:Constb_1}}: A Cauchy inequality and 
the definition of $\gamma:\Vnc\to L^2(\Omega)$ in~\eqref{eqn:gamma_form_def} verify 
	\begin{align}\label{eqn:DG_JvM_wM_bound}
		DG_\pw(Jv_\nc; w_\nc, \varphi)
		\leq \|
		\gamma(v_\nc,w_\nc)\|_{L^2(\Omega)}\|\nabla\varphi\|_{L^2(\Omega)}\qquad\text{for all }\varphi\in V.
	\end{align}
	The continuous bilinear map $b(\bullet,\bullet)$
	is symmetric and positive semi-definite (while
	$a_\pw(\bullet,\bullet)$ is a scalar product for $\Vnc$). Indeed, any $w_\nc\in\Vnc$ satisfies
	$b(w_\nc, w_\nc) = \|\gamma(v_\nc, w_\nc)\|_{L^2(\Omega)}^2$.
	Hence the min-max principle
	characterises the minimal eigenvalue $\lambda_{\min}$ of~\eqref{eqn:DG_JvM_wM_EVP} as
	\begin{align*}
		\lambda_{\min}=\min_{w_\nc\in\Vnc}\frac{a_\pw(w_\nc, w_\nc)}{b(w_\nc, w_\nc)}\equiv \min_{w_\nc\in\Vnc}\frac{\trbpw{w_\nc}^2}{\|\gamma(v_\nc,w_\nc)\|_{L^2(\Omega)}^2}.
	\end{align*}
	Since $\|\bullet\|_1=\|\nabla\bullet\|_{L^2(\Omega)}$, the supremum over all $\varphi\in V$, dense in
	$H^1_0(\Omega)$,
	in~\eqref{eqn:DG_JvM_wM_bound} and the previous bound on $\|\gamma(v_\nc,w_\nc)\|_{L^2(\Omega)}$
	verify~\eqref{eqn:Constb_1}
	for $\Constb{1}\coloneqq \lambda_{\min}^{-1/2}$.

	\medskip\noindent\textbf{ad \eqref{eqn:Constb_2}}:
	\Cref{lem:G_gen_Hoelder} with adjusted exponents $p_1=\infty$, $p_2=p_3=2$ and $t=1$ (resp.~$p_2=\infty$,
	$p_1=p_3=2$ and $t=0$) verify for any $\varphi\in V$ that
	\begin{align*}
		\Gamma_\pw(Jv_\nc, (1-I)v,\varphi)
		&\leq \|h_\T
		\Delta Jv_\nc\|_{L^\infty(\Omega)}\|h_{\T}^{-1}\nabla_\pw(1-I)v\|_{L^2(\Omega)}\|\nabla
		\varphi\|_{L^2(\Omega)},\\
		\Gamma_\pw((1-I)v,Jv_\nc,\varphi)
		&\leq \|\Delta_\pw (1-I)v\|_{L^2(\Omega)}
		\|\nabla Jv_\nc\|_{L^\infty(\Omega)}\|\nabla \varphi\|_{L^2(\Omega)}.
	\end{align*}
	This, 
	\cref{lem:IM}, and $\|\Delta_\pw \bullet\|_{L^2(\Omega)}\leq\sqrt 2\trbpw{\bullet}$
	result with~\eqref{eqn:DGG_vt} in
	\begin{align*}
		D\GGamma_\pw
		&(Jv_\nc; (1-I)v,\varphi) =\Gamma_\pw(Jv_\nc, (1-I)v,\varphi) + 
		\Gamma_\pw((1-I)v,Jv_\nc,\varphi)\\
				  &\leq\left(\sqrt{2}\|\nabla Jv_\nc\|_{L^\infty(\Omega)}+\kappa_1\|h_\T
	\Delta Jv_\nc\|_{L^\infty(\Omega)}\right)\trbpw{(1-I)v}\| \nabla\varphi\|_{L^2(\Omega)}.
	\end{align*}
	The first term on the right-hand side equals $g_\infty(v_\nc)$ from~\eqref{eqn:gp_def} and
	the supremum 
	over all $\varphi\in V$
	provides~\eqref{eqn:Constb_2} for $\Constb{2}\coloneqq g_\infty(v_\nc)$ 
	by density of $V\subset H^1_0(\Omega)$.

	\medskip\noindent\textbf{ad \eqref{eqn:Constb_3}}:
	Let $v\in V$ be arbitrary and apply~\cref{lem:sharp_Gpw_2_bound} for $v_\pw=(1-I) v$ 
	and $w=Jv_\nc$.
	The arguments for~\eqref{eqn:GvM_JvM_bound1}--\eqref{eqn:GvM_JvM_bound2} in the proof
	of~\eqref{eqn:G_Lipschitz} above with $v_\nc$ and $(1-J)v_\nc$ 
	replaced by $Jv_\nc$ and $(1-I)v$ provide
	\begin{align*}
			\trb{\Gamma_\pw(Jv_\nc, (1-I)v)}_*
			&\leq \kappa_1\frac{|\Omega|^{1/4}}{\pi}\|h_\T
			\Delta_\pw Jv_\nc\|_{L^4(\Omega)}\trbpw{(1-I)v},\\
			\trb{\Gamma_\pw((1-I)v,Jv_\nc)}_*
		&\leq \Const{G}(Jv_\nc) \trbpw{(1-I)v}.
	\end{align*}
	This and the split
	\begin{align*}
		D\GGamma_\pw
		&(Jv_\nc, (1-I)v) = \Gamma_\pw(Jv_\nc, (1-I)v) + 
		\Gamma_\pw((1-I)v,Jv_\nc)%
	\end{align*}
	from~\eqref{eqn:DGG_vt} provide~\eqref{eqn:Constb_3} with $\Constb{3}\coloneqq\Const{G}(J v_\nc) +
	\kappa_1|\Omega|^{1/4}/\pi \|h_\T
			\Delta_\pw Jv_\nc\|_{L^4(\Omega)}$.
\end{proof}

\section{Numerical experiments}%
\label{sub:Numerical experiments}

Numerical computations for the solution existence verification 
with the rate-optimal Morley FEM on adaptive meshes conclude this paper.

\subsection{Numerical realisation}%
	\label{sub:Numerical realisation}
	The implementation\footnote{The source code and experiment scripts are available under
	\url{https://gitlab.com/afem}.} 
	is based on the modular Julia framework \textit{AFEM.jl}~\cite{Gra:AFEMjlV012024}.
	\subsubsection*{Integration and solvers}%
	\label{ssub:Itegration and direct solver}
	The quasi-optimal smoother $J$ is a straightforward averaging of
	Hsieh-Clough-Tocher (HCT) degrees of freedom.
	The integration of polynomial expressions is carried out exactly.
	Errors in approximating non-polynomial expressions by polynomials of sufficiently high 
	degree are expected to be very small and are neglected for simplicity.
	Linear algebraic problems are solved directly with the \texttt{mldivide} operation in Julia.
	Generalised eigenvalue problems $Ax = \lambda Bx$ with symmetric $B=C^\top C$ were
	transformed into the standard form $C^{-\top}AC^{-1}x = \lambda x$ for a noticeable speed-up in the
	computations and iteratively solved with the Krylov-Schur algorithm \texttt{eigsolve}
	from~\cite{Hae:KrylovKitV0712024};
	the tolerance was set to \texttt{tol=1e-2} for the computation of $\|J\|$ and \texttt{tol=1e-10} otherwise.
	
	The nonlinear algebraic system~\eqref{eqn:ADWP} is solved for $u_\nc\in V_\nc$
	with nested Newton iterations from initial guess $u_0=0$ and the two-level stopping
	criterion described in~\cite[SM.~3]{CGN:PosterioriErrorControl2024} to near machine precision,
	and the verification framework is applied to $v_\nc\coloneqq u_\nc$.

	\subsubsection*{Domains and refinement}
	The experiments discuss three different refinement strategies starting from the initial 
	triangulation $\T_0$ of the unit square and L-shaped domain displayed 
	in \cref{fig:initial_mesh}.
	\begin{itemize}[leftmargin=*]
		\item \emph{Uniform red refinement} into four subtriangles by connecting the edge midpoints.
		\item \emph{Adaptive refinement} with the standard adaptive loop 
			(e.g.,~\cite[fig.~1]{CG:AdaptiveMorleyFEM2025}) 
			driven~by 
		\begin{align*}
			\eta^2(T)&\coloneqq|T|^2\|f\|_{L^2(T)}^2
			+|T|^{1/2}\hspace{-.3em}\sum^{}_{E\in\E(T)}\hspace{-.3em}\Big(
			|T|\|\jump{\Delta v_\nc \nabla v_\nc}\cdot \tau_E\|_{L^2(E\setminus\partial\Omega)}^2+\|[D^2\uM]_E\tau_E\|_{L^2(E)}^2\Big)
		\end{align*}
		for a triangle $T\in\mathcal{T}$, that
			marks and refines (using NVB) all edges 
			of a minimal set of triangles $\mathcal{M}\subset\T$ satisfying the
			Dörfler bulk criterion 
			$\tfrac{1}{2}\sum^{}_{T\in\T} \eta^2(T)\leq \sum^{}_{T\in\mathcal{M}} \eta^2(T).$
		\item \emph{Adaptive refinement with mesh size reduction} extending the standard adaptive loop by additionally
			marking (only) the refinement edge of all triangles $T\in\T$ with $h_T=h_{\max}$.
	\end{itemize}
	The third refinement strategy simplifies the modified 
	marking strategy of~\cite[prop.\ 16]{BHP:AdaptiveFEMCoarse2017}
	and ensures convergence of the maximal mesh size $h_{\max}\to 0$:
	The initial triangulations into equisized 
	right-isosceles triangles with hypotenuse selected as the refinement edge
	lead to a reduction of $h_{\max}$ by a factor of $2^{-1/2}$ in each iteration.
	All computed triangulations consist of right-isosceles triangles that allow for 
	the improved constants
	\begin{align*}
		\Const{P}=\frac{1}{\sqrt{2}\pi},\quad\Const{tr,1}=\frac{\sqrt5}{3\sqrt2},\quad\kappa_1= 0.1653,\quad \kappa_2=0.0451,
	\end{align*}
	see~\cite{LSL:OptimalEstimationFujino2019}, 
	\cite[ex.\ 1]{BCGT:StabilizationfreeHHOPosteriori2023}, 
	and~\cite[rem.\ 4.2]{CG:RateoptimalHigherorderAdaptive2024} for references.
	Ndof abbreviates the number of degrees of freedom and the global error estimator reads
	$\eta\coloneqq\sqrt{\sum^{}_{T\in\mathcal{T}} \eta^2(T)}$.
\begin{figure}[!hb]
	\centering
	\scalebox{.9}{\hbox{%
			\includegraphics[]{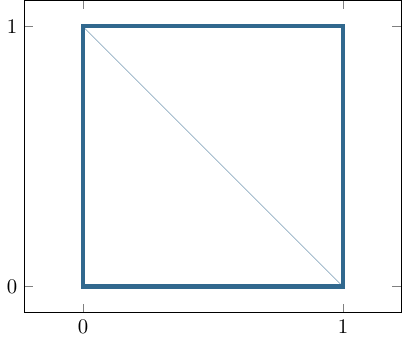}
			\includegraphics[]{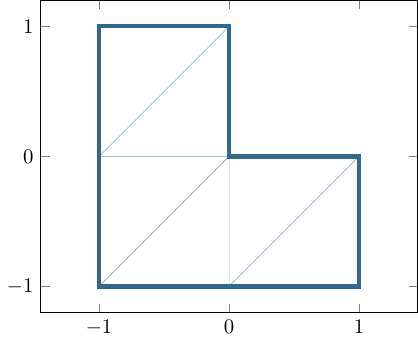}
	}}
	\caption{Initial triangulation $\T_0$ for the unit square and L-shaped domain.}%
	\label{fig:initial_mesh}
\end{figure}%

	\subsection{Academic example on the unit square}%
	\label{sec:Academic_benchmark_infsup}
	This benchmark considers the source $F_{\lambda}\equiv f_{\lambda}\in L^2(\Omega)$ matching
	the smooth solution
	\begin{align*}
		u(x,y)\equiv u_{\lambda}(x,y) = \lambda\, x^2(1-x)^2y^2(1-y)^2\in H^2_0(\Omega)
	\end{align*}
	for some parameter $\lambda\in\{1,10,100\}$
	on the unit square $\Omega=(0,1)^2$ with initial triangulation displayed in \cref{fig:initial_mesh}.
	As shown in \cref{fig:Square_Morley_Error_Convergence},
	all three refinement strategies from \cref{sub:Numerical realisation}
	lead to the optimal convergence rate $0.5$ of
	the exact error $\trbpw{u-u_\nc}$, 
	the error estimator $\eta(\mathcal{T})$, and 
	(on sufficiently fine meshes) the guaranteed error bound $\vrmT$ from
	\cref{thm:newton_kantorovich}.
	The latter has (undisplayed) efficiency
	indices $EF(\vrmT)=\vrmT/\trbpw{u-u_\nc}$ between $8$ and $10$ on fine meshes.
	\begin{figure}[!h]
		\centering
		\includegraphics{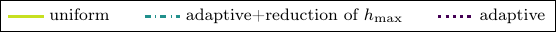}\\
		\includegraphics{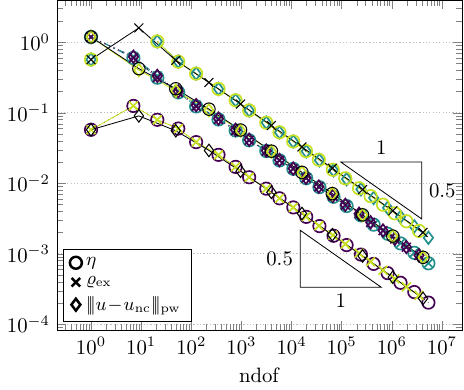}
		\includegraphics{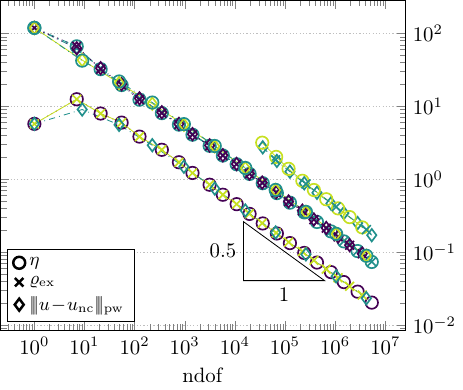}
		\caption{Convergence history plots of the error $\trbpw{u-u_\nc}$,
			the error estimator $\eta$, and the
			guaranteed error bound $\vrmT$ from \cref{thm:newton_kantorovich}.a
			under the three refinement strategies
			$\lambda=1$ (left) and $\lambda=100$ (right) in \cref{sec:Academic_benchmark_infsup}.
		 }
		\label{fig:Square_Morley_Error_Convergence}
	\end{figure}

	\begin{figure}[!h]
		\centering
		\includegraphics{./Figures/NVS/Legend_uniform_adaptive_color_out.pdf}\\
		\includegraphics{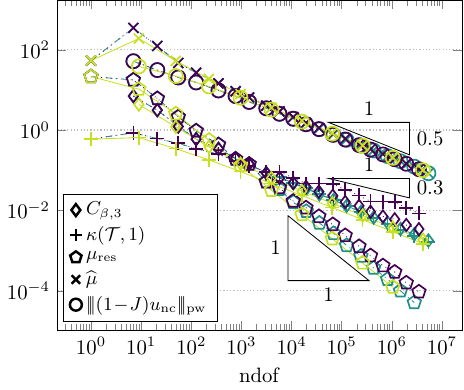}
		\includegraphics{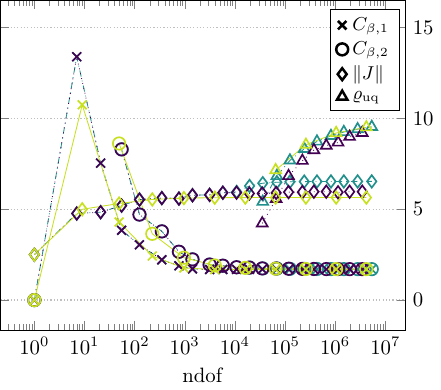}\\
		\caption{Convergence history plot of several quantities of \cref{tab:sol_verification}
			for $\lambda=100$ in
			\cref{sec:Academic_benchmark_infsup}.
		 }
		\label{fig:Square_Morley_InfSup_Const}
	\end{figure}
	
	\Cref{fig:Square_Morley_InfSup_Const} displays the convergence history of several terms from
	\cref{tab:sol_verification} computed from the Morley
	solution $v_\nc=u_\nc$ and its smoothed version $\ut=Ju_\nc$ as described in
	\cref{sec:The_continuous_inf--sup_constant,sec:Algebraic_realisation_and_application}.
	The residual estimator $\mu_{\rm res}=\mu_{\rm res}(\mathcal{T})$ 
	from \cref{lem:explicit_constants_NVS}.a 
	converges with higher order, as expected by the mesh size factors.
	This, the convergence $L_G\to0$ (undisplayed), and $\operatorname{Res}_h=0$ from an exact solve 
	(up to machine precision) justify the observed supercloseness of $\trbpw{(1-J)u_\nc}$ and
	$\mut$ from~\eqref{eqn:mut_N2_def}.
	The insufficient reduction of $h_{\max}$ from the standard AFEM leads to a reduced 
	experimental convergence rate~$0.3$
	of the approximation error control $\kappa(\T, 1)$ from~\eqref{eqn:kappa_Ts}.
	Uniform refinement or the modified adaptive strategy with a guaranteed reduction of the mesh size results in the
	optimal rate~$0.5$.
	The convergence of $\Constb{3}$, 
	displayed in \cref{fig:Square_Morley_InfSup_Const} with rate $0.5$ enables
	the observed convergence of the lower bounds $\btT$ and $\beta_0$
	to the continuous inf--sup constant $\beta$.
	The reference value for $\beta$ is obtained by Aitken extrapolation of $\bht$ for a
	sequence of uniformly refined meshes.
	It is displayed in \cref{tab:NVS_Square_infsup} together with the guaranteed lower
	bounds computed, and the existence and uniqueness radii
	on fine meshes with more than $4\times10^6$ ndof.

	\begin{figure}[!h]
		\centering
		\includegraphics{./Figures/NVS/Legend_uniform_adaptive_color_out.pdf}\\
		\includegraphics{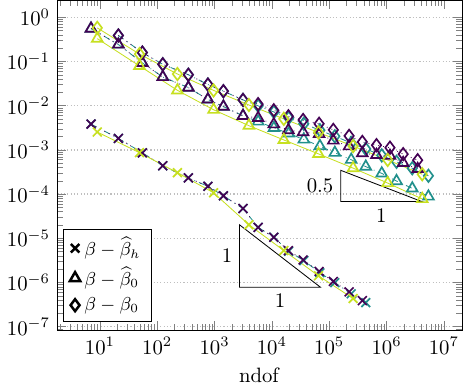}
		\includegraphics{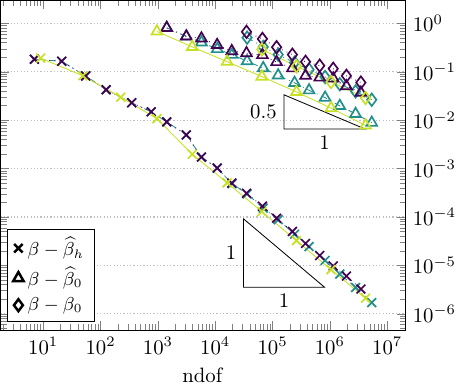}
		\caption{Convergence history of the (lower bounds on the) inf--sup constants 
			under the three refinement strategies for $\lambda=1$ (left)
			and $\lambda=100$ (right) in \cref{sec:Academic_benchmark_infsup}.
		 }
		\label{fig:Square_Morley_infsup_Convergence}
	\end{figure}

	\begin{table}[ht]
		\centering
		\begin{tabular}{l|lll}
			$\lambda$ &	$1$			&	$10$			&	$100$			\\\hline
			$\beta$&	$0.9999209$	&	$0.9992118$		&	$0.9924483$		\\
			$\btT$&	$0.9998327$		&	$0.9983298$		&	$0.9835772$		\\
			$\beta_0$&	$0.9996614$	&	$0.9965771$		&	$0.9659684$		\\\hline
			$\vrpT$&$9.866264\phantom{\; (\infty)}$&$9.835858\phantom{ \; (\infty)}$&
										$9.659684 \phantom{\; (\infty)}$		\\
			$\vrmT$&	$0.0016892$	&	$0.0172620$		&	$0.1712798$
		\end{tabular}
		\caption{Lower bound $\beta_0$ on the inf--sup constant $\beta$ of $DN(u)$,
		lower bound $\btT$ on the inf--sup constant of $DN(Ju_\nc)$, radius of uniqueness 
		$\vrpT$, and error bound
		$\vrmT$ computed on meshes with more than $4\times10^6$ ndof in 
		\cref{sec:Academic_benchmark_infsup}.}
		\label{tab:NVS_Square_infsup}
	\end{table}

	\subsection{Singular solution on the L-shaped domain}%
	\label{sec:Singular_solution_to_the_Navier_Stokes}
	This benchmark considers the L-shaped domain $\Omega=(-1,1)^2\setminus[0,1)^2$ 
	from \cref{fig:initial_mesh} with
	$\sigma_{\rm reg}\equiv\mu=0.54448$.
	The source $F$ matches the singular function 
	from~\cite{Gri:SingularitiesBoundaryValue1992},
	given by
	\begin{align*}
		u(r,
		\varphi)=\big(r^2\sin(\varphi)^2-1\big)^2\big(r^2\cos(\varphi)^2-1\big)^2r^{1+\mu}\;\xi\big(\varphi-\pi/2\big),
	\end{align*}
	in polar coordinates, with smooth
	$\xi$ from~\cite[eqn.~3.2.9]{Gri:SingularitiesBoundaryValue1992} 
	(therein $\xi(\varphi) = u(\mu,\varphi)$).

	\Cref{fig:Lshape_Morley_infsup} displays the optimal convergence rate $0.5$ for
	the exact error $\trbpw{u-u_\nc}$ and the error estimator $\eta$ under both
	adaptive strategies.
	The suboptimal empirical rate $\sigma_{\mathrm{reg}}/2=0.27$ under uniform refinement is 
	expected by the singularity of $u$ at the origin.
	The singularity also explains the same experimental convergence rate of 
	$\trbpw{(1-J)u_\nc}$ and, as predicted by \cref{thm:discretisation_error},
	$\kappa(\T, 1)\lesssim h_{\max}^{\sigma_{\rm reg}}$ on uniformly refined meshes.
	An insufficient reduction of the maximal mesh size $h_{\rm max}\lesssim \kappa(\T,1)$
	leads to a reduced convergence rate $0.3$ for $\kappa(\T,1)$ in
	the standard AFEM loop,
	while the modified adaptive refinement strategy recovers the optimal rate $0.5$.
	The influence on the lower bounds $\btT$ from \cref{thm:continuous_inf_sup} is significant: 
	$\btT$ converges significantly faster for the latter strategy and
	enables a better guaranteed lower bound $\beta_0$ on the inf--sup constant of $DN(u)$ 
	on coarser meshes.
	Computations on meshes with more than $9\times10^6$ ndof reveal
	\begin{align*}
		\beta=0.96463412,\;\btT=0.94340634,\;\beta_0=0.87147780,\;\vrmT=0.38937903,\;\vrpT=4.98635670
	\end{align*}
	(with Aitken extrapolation $\beta$ of $\bht$ from a sequence of uniformly refined triangulations)
	and guarantee local uniqueness of the exact
	solution $u$ in a ball of radius $\vrpT$ around $u_\nc$.

	\begin{figure}[!h]
		\centering
		\includegraphics{./Figures/NVS/Legend_uniform_adaptive_color_out.pdf}\\
		\includegraphics{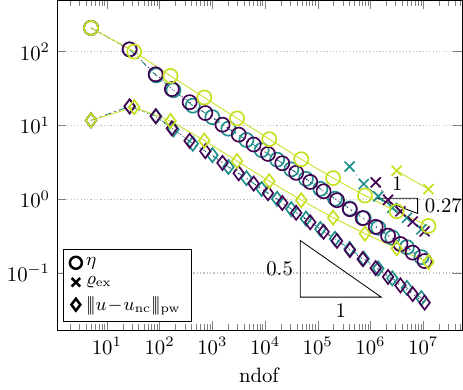}
		\includegraphics{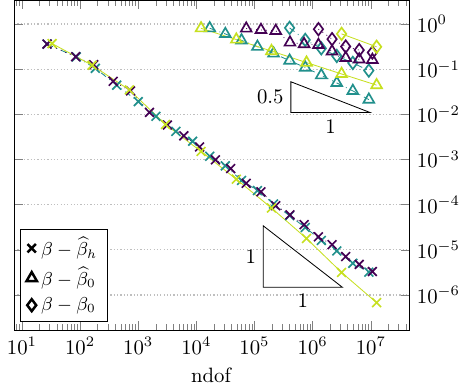}
		\includegraphics{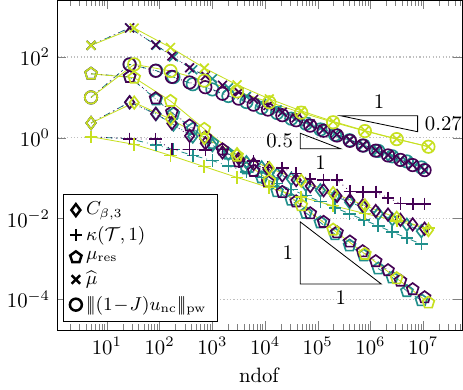}
		\includegraphics{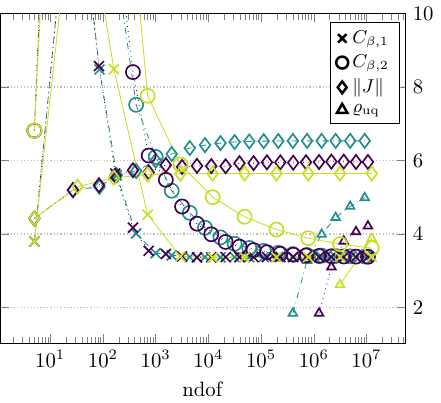}
		\caption{Convergence history plot of different error measures (top left), the (lower bounds of the) inf--sup constants
			(top right), and several quantities from \cref{tab:sol_verification} (bottom) in
			\cref{sec:Singular_solution_to_the_Navier_Stokes}.
		 }
		\label{fig:Lshape_Morley_infsup}
	\end{figure}

	\appendix
	\section[Proof of theorem 2.5]{Proof of \cref{thm:abstract_apriori}}%
	\label[appendix]{apx:Proof of a priori}
	This appendix presents a self-contained and simplified proof of \cref{thm:abstract_apriori} extending the arguments in \cite{CNRS:UnifiedPrioriAnalysis2023} beyond quadratic semilinearities in three steps.

	\medskip\noindent\emph{Step 1} prepares the proof.
	By assumption, $DG_\pw: V_\pw\to L(V_\pw;V^*)$ is locally Lipschitz at $u\in V$: 
	There exists $\delta_u, L_u>0$ such that 
	\begin{align*}
		\|DG_\pw(v_\pw)-DG_\pw(w_\pw)\|_{L(V_\nc;V^*)}
		&\leq  L_u \trbpw{v_\pw-w_\pw}
	\end{align*}
	for all $v_\pw,w_\pw\in B_\pw(u,\delta_u)\coloneqq\{v_\pw\in V_\pw\ :\ \trbpw{u-v_\pw}\leq \delta_u\}$. 
	Recall $\trbpw{(1-I)u}\leq \delta_0$ from~\eqref{eqn:H2_H4} and require $\delta_0\leq \delta_u/2$ from hereon such that $B_\pw(Iu,\delta_u/2)\subset B_\pw(u,\delta_u)$.
	This and~\eqref{eqn:DSWP} imply the local Lipschitz continuity of $DN_h$ with constant $L_u\|J\|$ in $B_\pw(Iu,\delta_u/2)\cap V_\nc$.
	Moreover, $Iu\in B_\pw(u,\delta_u)$ implies
	\begin{align}
		\|DG_\pw(u)-DG_\pw(Iu)\|_{L(V_\nc;V^*)}
		&\leq L_u\delta_0.\label{eqn:loc_Lip_DG}
	\end{align}
	The analogue estimate for the locally Lipschitz $G_\pw\in C^1(V_\pw;V^*)$ with $\widetilde L_u>0$ reads
	\begin{align}\label{eqn:loc_Lip_G}
		\trb{G_\pw(u)-G_\pw(Iu)}_*
		\leq \widetilde L_u\delta_0.
	\end{align}

	\medskip\noindent\emph{Step 2} controls the discrete inf--sup constant at $Iu$. 
	Consider any $v_\nc\in V_\nc$ with $\trbpw{v_\nc}=1$ and define $\xi=A^{-1}DG_\pw(u;v_\nc)\in V$ 
	as the unique solution to $a(\xi,\bullet )= DG_\pw(u;v_\nc)\in V^*$. 
	The regular root $u\in V$ has a positive inf--sup constant $\beta>0$ and%
	~\eqref{eqn:cont_inf_sup} provides
	\begin{align*}
		\beta\trb{Jv_\nc}\leq \trb{DN(u;Jv_\nc)}_*\leq \trb{Jv_\nc + \xi}+\|DG_\pw(u)\|_{L(V_\pw;V^*)}\trb{(1-J)v_\nc}_\pw
	\end{align*}
	with the boundedness of $DG_\pw(u)\in L(V_\pw;V^*)$ in the last step.
	This, triangle inequalities, and the quasi-optimality~\eqref{eqn:J_qo} of $J$ reveal
	\begin{align*}
		1=\trbpw{v_\nc}\leq\trbpw{(1-J)v_\nc}+\trbpw{Jv_\nc}
		\leq c_0^{-1}/2\trbpw{v_\nc+\xi}
	\end{align*}
	with $c_0\coloneqq(2\LJ+2\beta^{-1}(1+(1+\|DG_\pw(u)\|_{L(V_\pw;V^*)})\LJ))^{-1}$.
	The orthogonality~\eqref{eqn:I1} and $IJ\varphi_\nc=\varphi_\nc$ from~\eqref{eqn:JI} for $\varphi_\nc\coloneqq (v_\nc+I\xi)/\trbpw{v_\nc+I\xi}$ 
	verify with $\trbpw{\varphi_\nc}=1$ that
	\begin{align*}
		\trbpw{v_\nc+I\xi}
		&=a_\pw(v_\nc+\xi,J\varphi_\nc) - a_\pw((1-I)\xi,J\varphi_\nc)\\
		&\leq a_\pw(v_\nc+\xi,J\varphi_\nc) + \|J\|\trbpw{(1-I)\xi}.
	\end{align*}
	Algebraic manipulations with the definition of $\xi\in V$ and~\eqref{eqn:DSWP} reveal 
	\begin{align*}
		a_\pw(v_\nc+\xi,J\varphi_\nc)
		&=DN_h(Iu; v_\nc,\varphi_\nc) + DG_\pw(u;v_\nc,J\varphi_\nc) - DG_\pw(Iu; v_\nc, J\varphi_\nc)\\
		&\leq DN_h(Iu; v_\nc,\varphi_\nc) + L_u\|J\|\delta_0
	\end{align*}
	with~\eqref{eqn:loc_Lip_DG} and $\trbpw{v_\nc}=\trbpw{\varphi_\nc}=1$ in the last step.
	The previous displayed estimates combine with a triangle inequality and
	$\trbpw{(1-I)\xi}\leq \delta_0$ from~\eqref{eqn:H2_H4} to
	\begin{align*}
		2c_0 \leq {DN_h(Iu;v_\nc,\varphi_\nc)}+ (1+(1+L_u)\|J\|)\delta_0.
	\end{align*}
	For $\delta_0\leq c_0/(1+(1+L_u)\|J\|)$ sufficiently small, this shows
	\begin{align*}
		c_0
		\leq \inf_{0\ne v_\nc\in
		\Vh}\sup_{0\ne \varphi_\nc\in\Vh}\frac{DN_h(Iu;v_\nc,\varphi_\nc)}{\trbpw{v_\nc}\trbpw{\varphi_\nc}}.
	\end{align*}

	\noindent\emph{Step 3} concludes the proof. 
	Similar to \cref{lem:cont_inf_sup}, the discrete inf--sup constant from step~2 provides the invertibility of $DN_h(Iu)$ in the finite-dimensional space $V_\nc$ with 
	\begin{align*}
		\|DN_h(Iu)^{-1}\|_{L(V_\nc^*;V_\nc)}\leq m\coloneqq c_0^{-1}.
	\end{align*}
	Moreover,~\eqref{eqn:DSWP} and $N(u) = 0$ in~\eqref{eqn:ASWP} imply for any $\varphi_\nc\in V_\nc$ with $\trbpw{\varphi_\nc}=1$ that
	\begin{align*}
		N_h(Iu,\varphi_\nc) &= a_\pw(Iu-u,J\varphi_\nc) + G_\pw(Iu; J\varphi_\nc) - G(u; J\varphi_\nc),\\
		&\leq(1+ \widetilde L_u)\|J\|\trbpw{(1-I)u}
	\end{align*}
	with~\eqref{eqn:loc_Lip_G} in the last step.
	This verifies
	\begin{align*}
		\trbpw{DN_h(Iu)^{-1}N_h(Iu)}\leq \varepsilon\coloneqq c_0^{-1}(1+ \widetilde L_u)\|J\|\trbpw{(1-I)u}.
	\end{align*}
	Set $L\coloneqq\max\{L_u\|J\|, 2/(m\delta_u)\}$ and $r\coloneqq 1/(Lm)\leq \delta_u/2$, and recall from step~1 that $DN_h$ is Lipschitz continuous with Lipschitz constant $L$ in $B_\pw(Iu,r)$.
	Since $\varepsilon\lesssim \trbpw{(1-I)u}\leq\delta_0$, $L m \varepsilon\leq 1/2$ for sufficiently small $\delta_0$ and the Newton--Kantorovich theorem~\cite[thm.~3]{CM:NewtonKantorovichTheorem2012} provides a discrete root $u_\nc\in V_\nc$ of $N_h(u_\nc)=0$ that is unique in $B_\pw(Iu,r)\cap V_\nc$.
	Using $1-\sqrt{1-t}\leq t$ for all $t\in[0,1]$, the Newton--Kantorovich a priori bound reveals
	\begin{align*}
		\trbpw{Iu-u_\nc}\leq \frac{1-\sqrt{1-2Lm\varepsilon}}{Lm}\leq2\varepsilon\lesssim\trbpw{(1-I)u}.
	\end{align*}
	Hence a triangle inequality and the best-approximation property of $Iu$ by~\eqref{eqn:I1} provide 
	\begin{align*}
		\trbpw{u-u_\nc}\lesssim \trbpw{(1-I)u}=\min_{v_\nc\in V_\nc}\trbpw{u-v_\nc}.
	\end{align*}
	This is the quasi-best approximation property and choosing $\delta_0\leq r/2$ small enough, one can guarantee that $u_\nc\in V_\nc$ is the only discrete root of $N_h(u_\nc)=0$ with $\trbpw{u-u_\nc}\leq \varepsilon_0\coloneqq r/2$. 
	The uniform bound on the inf--sup constant follows as in step 2 in the proof of \cref{thm:newton_kantorovich} and concludes this proof.\qed

	\bibliographystyle{alphaabbr}
	\bibliography{Bibliography}

\end{document}